\newtheorem{Lemma}{Lemma}[section]
\newtheorem{Theorem}{Theorem}[section]
\theoremstyle{definition}
\newtheorem{Assumption}{Assumption}[section]
\newtheorem{Algorithm}{Algorithm}[section]
\newtheorem*{Remark}{Remark}
\newtheorem*{Proof}{Proof}
\newcommand{\RR}{\mathbb{R}}
\newcommand{\PP}{\mathbb{P}}
\newcommand{\CC}{\mathbb{C}}
\newcommand{\VV}{\mathbb{V}}
\newcommand{\TT}{\mathbb{T}}
\newcommand{\EE}{\mathbb{E}}
\newcommand{\wh}[1]{\widehat #1}
\newcommand{\x}{\mathbf{x}}
\newcommand*\Div{\mbox{div}}
\newcommand*\Curl{\mbox{curl}}
\newcommand{\cl}[1]{\overline{#1}}
\newcommand{\nullspace}{\mbox{nullspace }}
\newcommand{\image}{\mbox{image }}
\newcommand{\rank}{\mbox{rank }}
\newcommand{\npw}{n^{(pw)}}
\newcommand{\hnorm}{|\hspace{-.6px}|\hspace{-.6px}|}
\newcommand{\kk}{\mathrm{k}}
\title{Multi-resolution Isogeometric Analysis -- Efficient adaptivity utilizing the multi-patch structure}
\author{Stefan Takacs\footnote{stefan.takacs@jku.at, Johannes Kepler University Linz, Austria.}, Stefan Tyoler\footnote{stefan.tyoler@ricam.oeaw.ac.at, RICAM, Austrian Academy of Sciences, Austria.\\This work was supported by the Austrian Science Fund (FWF): P33956}}
\date{May 2024}
\begin{document}

\maketitle

\begin{abstract}
Isogeometric Analysis (IgA) is a spline based approach to the numerical solution of partial differential equations. There are two major issues that IgA was designed to address. The first issue is the exact representation of domains stemming from Computer Aided Design (CAD) software. In practice, this can be realized only with multi-patch IgA, often in combination with trimming or similar techniques. The second issue is the realization of high-order discretizations (by increasing the spline degree) with numbers of degrees of freedom comparable to low-order methods. High-order methods can deliver their full potential only if the solution to be approximated is sufficiently smooth; otherwise, adaptive methods are required. In the last decades, a zoo of local refinement strategies for splines has been developed. The authors think that many of these approaches are a burden to implement efficiently and impede the utilization of recent advances that rely on tensor-product splines, e.g., concerning matrix assembly and preconditioning. The implementation seems to be particularly cumbersome in the context of multi-patch IgA. Our approach is to moderately increase the number of patches and to utilize different grid sizes on different patches. This allows reusing the existing code bases, recovers the convergence rates of other adaptive approaches and increases the number of degrees of freedom only marginally.
\end{abstract}

\section{Introduction}

When Isogeometric Analysis (IgA) was originally proposed almost two decades ago in the seminal paper~\cite{Hughes2005} by Hughes, Cottrell and Bazilevs, it was supposed to solve two major issues. The first issue was that a standard realization of a Finite Element Analysis (FEA) requires the computational domain to be meshed. IgA aims to bridge the gap between Computer Aided Design (CAD) and FEA, allowing to directly use the geometry description from design for analysis. The second goal of IgA was to more easily allow the construction of smooth basis functions. This is of relevance since it reduces the number of degrees of freedom and the number of spurious eigenmodes and it allows the construction of conforming discretizations for fourth and higher order problems. IgA has gained substantial interest since it was first proposed, cf. the book~\cite{Cottrell2009} and references therein.

Technically speaking, in IgA one uses B-Splines or Non-Uniform Rational B-Splines (NURBS) as ansatz functions for a Galerkin or collocation discretization of the given boundary value problem. Starting from univariate B-Splines, their extension to two and more dimensions is typically realized be setting up tensor products for the unit square or the unit cube. The basis functions on the computational domain are then defined via the pull-back principle, utilizing a parameterization of the computational domain that may stem from the CAD model. Since the parameterization is required to be continuous, only simply connected domains can be parameterized with a single function. In practice, the overall computational domain is decomposed into subdomains, usually called patches, which are parameterized separately. This approach retains the tensor-product structure and the high smoothness of splines locally. Concerning the global smoothness conditions, one usually only imposes conditions that are necessary in order to obtain a conforming discretization; for second order differential equations, whose conforming discretization is usually posed in the Sobolev space $H^1$, only continuity is imposed. Certainly, having only reduced smoothness across the interfaces increases the number of degrees of freedom. However, this is typically negligible; we will elaborate on this in Section~\ref{sec:3}.

One of the main benefits of a high-order discretization is their superior approximation power, compared to low-order discretizations. For a discretization with polynomial or spline degree $p$, the approximation error in the standard $H^1$ Sobolev norm decays like $h^p$. Certainly, this is only true if the function to be approximated, typically the solution, is smooth enough, specifically it needs to be in the Sobolev space $H^{p+1}$. So, the higher the chosen degree is, the more smoothness is usually desired for full approximation power.

For many problems, where the desired smoothness is not available globally, it is usually available on large portions of the computational domain. Only close to certain features, like corners or changes in say material parameters, the smoothness of the solution is reduced. This motivates the use of adaptive algorithms that allow for local refinement close to these features. Adaptivity is usually based on the \emph{solve---estimate---mark---refine} loop. After solving the problem on the current discretization, an error estimator is used in order to determine the areas where the mesh is refined. Here, we restrict ourselves to residual based error estimators, cf.~\cite{Verfuerth}. In a next step, certain areas are marked as to be refined, usually with D\"orfler marking~\cite{Doerfler1996,Pfeiler2020}. Finally, a finer discretization is set up.

In standard finite element contexts, refinement is usually done using simple bisection methods. Since one usually wants to avoid hanging nodes, there are many strategies to avoid them, like red-greed based refinement techniques or nearest vertex bisection, cf.~\cite{Verfuerth,Binev2004} to name just a few. Also, in spline contexts, several approaches have been proposed over the last few decades. Among the most prominent examples are hierarchical B-splines (HB) and truncated hierarchical B-splines (THB), see~\cite{Kraft1997,Giannelli2012,Giannelli2016} and others, T-splines, see~\cite{Sederberg2004}, locally refined (LR) splines, see~\cite{Johannessen2014} and many others.

There are several targets one wants to achieve with such spline constructions. First of all these splines must offer the desired (local) approximation power, meaning the function space needs to be rich enough. Certainly, this should be possible without introducing too many degrees of freedom. Moreover, the approach needs to be simple enough to be implemented efficiently, including in a multi-patch context.

Most local spline constructions known from literature are formulated for single patch discretizations; often, their generalization to the multi-patch case is a non-trivial task. So, we introduce an approach that is formulated for the multi-patch case straight from the beginning. In order to keep the method simple, we use the multi-patch structure (which we need anyway for the representation of the geometry) also for adaptivity.

In our approach, we keep the local tensor-product structure within each patch. In order to allow for local refinement, we might decide to refine the B-splines only on a few patches or to split patches into smaller patches, where the grid size could be specified for each of the smaller patches separately. Compared to more standard approaches, this leads to discretizations that are not ``fully matching'' across the interfaces. One possibility to handle such cases is the use of mortar or discontinuous Galerkin approaches~\cite{LangerToulopoulos2015,LangerMantzaflaris2015}. In our case, the discretizations on the interfaces between any two patches are nested. This allows the use of standard conforming Galerkin discretizations. We show that it is possible to construct bases for the resulting spaces and to provide the desired approximation error estimates.

Besides the fact that our approach is inherently using the multi-patch structure, one of its advantages is that it preserves the local tensor product structure of the problem. So, approaches that rely on this structure, like for matrix assembling, cf.
\cite{Calabro2017,Mantzaflaris2017}
and others, or for preconditioning, cf.
\cite{Hofreither2017,Sangalli2016} and others,
can still be used. It is worth stressing again that this approach of splitting patches slightly increases the number of degrees of freedom (since the functions are only $C^0$ smooth at the interfaces of the patches), however this is not significant and is outweighed by the benefits of our approach.

This paper is organized as follows. In Section~\ref{sec:2}, we introduce the adaptive refinement strategy that is used throughout this paper. A mathematical specification of the non-conforming multi-patch geometries produced by the adaptive refinement strategy is specified in Section~\ref{sec:3}. The construction of a basis for the overall function space and its properties are discussed in Section~\ref{sec:4}. In Section~\ref{sec:5}, we provide approximation error estimates. Extensive numerical experiments are presented in Section~\ref{sec:6}. Conclusions are drawn in Section~\ref{sec:7}.

\section{Isogeometric Galerkin method}\label{sec:2}

We are discussing our approach for a simple elliptic model problem with 
Dirichlet boundary conditions in two dimensions. Aspects of extending the approach to three dimensions are addressed at the end of this paper. We denote the computational domain by $\Omega \subset \mathbb R^2$ and assume it to be open, connected and bounded with a Lipschitz continuous boundary $\partial \Omega$. Given a strictly positive diffusion parameter $\nu \in L^\infty(\Omega)$ and a right-hand-side function $f\in L^2(\Omega)$, the problem reads in variational form as follows.
\[
		\mbox{Find } u\in H_0^1(\Omega):\quad
		\underbrace{\int_\Omega \nu \nabla u \cdot \nabla v \mathrm d x}_{\displaystyle a(u,v):=}
		= \underbrace{\int_\Omega f v\, \mathrm d x}_{\displaystyle \ell(v):=}
		\quad
		\forall\; v\in H^1_0(\Omega).
\]
Here and in what follows, $H^k$, $H^k_0$ and $L^2$ are the standard Sobolev and Lebesgue spaces with standard norms $\|\cdot\|_{H^k}$ and $\|\cdot\|_{L^2}$, see~\cite{Adams2003}. The restriction to the boundary is to be understood in the sense of a trace operator.
To discretize this problem, we use multi-patch Isogeometric Analysis.

We assume that the computational domain $\Omega$ is composed of $K$ non-overlapping patches $\Omega_k$, i.e.,
\begin{equation*}
    \cl \Omega = \bigcup_{k=1}^K \cl{\Omega_k} \quad \text{with } \quad
    \Omega_k \cap \Omega_\ell = \emptyset \quad \text{for } k \neq \ell.
\end{equation*}
We assume that the initial configuration is conforming, i.e., the intersection of the closures of any two different patches $\cl{\Omega_k}\cap \cl{\Omega_\ell}$ is either empty, a common corner or a common edge. Here and in what follows, the edge of a patch $\Omega_k$ is the image of one of the four sides of the unit square, like $G_k((0,1)\times\{0\})$. Alike, the corners of a patch are the images of the four corners of the unit square, like $G_k(0,0)$. In the adaptive context, this condition is relaxed to \thref{ass:matching}.

We further assume that each patch $\Omega_k$ is parameterized by a bijective geometry mapping
$
    G_k : \wh\Omega \to \Omega_k
$,
where $\wh\Omega:=(0,1)^2$ is the \textit{parameter domain}. In standard IgA applications, the mappings $G_k$ are B-splines or NURBS, however, we do not require this for our analysis. The parameterizations have to be sufficiently regular in order to satisfy the following assumption.

\begin{Assumption}\label{ass:map}
We assume that there is a uniform constant $C_G>0$ and there are characteristic patch sizes $H_k \ge 0$ and smoothness parameters $r_k > 0$ for each of the patches $k=1,\ldots,K$ such that
\begin{align*}
				&\max_{i\in\{0,\ldots,j\}} \| \partial_x^i \partial_y^{j-i} G_k \|_{L^\infty(\widehat \Omega)} \le C_G H_k^j
				\quad \mbox{for}\quad j = 1,\ldots,r_k+1
                \quad\mbox{and}\quad
				\| (\nabla G_k)^{-1} \|_{L^\infty(\widehat \Omega)} \le C_G H_k^{-1},
\end{align*}
where $\partial_x^i$ and $\partial_y^i$ denotes the $i$-th derivative in direction of $x$ or $y$, respectively. $\nabla G_k$ denotes the Jacobian of $G_k$. Moreover, we assume the same for the restriction of $G_k$ to its parameter lines, i.e.,
\begin{align*}
				&\| \partial^j g \|_{L^\infty(0,1)} \le C_G H_k^j
				\quad \mbox{for}\quad j = 1,\ldots,r_k+1,
				\quad\mbox{and}\quad
				\| |\partial g|^{-1} \|_{L^\infty(0,1)} \le C_G H_k^{-1}
\end{align*}
holds for all $g \in \{ G_k(x,\cdot):x\in[0,1]\}\cup \{ G_k(\cdot,y):y\in[0,1]\}$, where $|\cdot|$ denotes the Euclidean norm.
\end{Assumption}

For each of the patches, we set up independent function spaces of some spline degree $p$. For each spatial direction $\delta \in \{1,2\}$, we assume to have a $p$-open knot vector, i.e., $\Xi^{(k,\delta)} = (\xi_i^{(k,\delta)})_{i=1}^{n^{(k,\delta)}+p+1}$ with
\begin{align*}
				\xi_1^{(k,\delta)}
                &= \xi_2^{(k,\delta)}
                = \cdots
                = \xi_{p+1}^{(k,\delta)}
				\le \xi_{p+2}^{(k,\delta)}
                \le \cdots \le
                \xi_{n^{(k,\delta)}}^{(k,\delta)} \le
				\xi_{n^{(k,\delta)}+1}^{(k,\delta)}
                = \xi_{n^{(k,\delta)}+2}^{(k,\delta)}
                = \cdots = \xi_{n^{(k,\delta)}+p+1}^{(k,\delta)},\\
        \xi_{i}^{(k,\delta)} &< \xi_{i+p}^{(k,\delta)}
        \quad\mbox{for}\quad i=2,\ldots,n^{(k,\delta)}.
\end{align*}

For every knot vector, the associated $B$-spline basis $(B^{(k,\delta)}_{i})_{i=1}^{n^{(k,\delta)}}$ is given by the Cox-de Boor formula, see \cite{deBoor1972}. This basis spans the corresponding B-spline function space $\wh V_h^{(k,\delta)}:= \text{span}\{B^{(k,\delta)}_{i}:i=1\ldots,n^{(k,\delta)}\}$ on the unit interval.

On the parameter domain $\wh \Omega=(0,1)^2$, we define a corresponding tensor-product basis $\wh\Phi^{(k)} :=(\wh \phi_i^{(k)})_{i=1}^{n^{(k)}}$ with $n^{(k)} = n^{(k,1)}n^{(k,2)}$, which is mapped to the physical patch $\Omega_k$ via the \emph{pull-back principle} in order to obtain a basis $\Phi^{(k)} :=(\phi_i^{(k)})_{i=1}^{n^{(k)}}$:
\begin{equation*}
		\wh \phi_{i+(j-1)n^{(k,1)}}(x,y) = B^{(k,1)}_i(x)B^{(k,2)}_j(y)
		\quad\mbox{and}\quad
		\phi_{j}^{(k)}( G_k( \mathbf x ) ) = \wh \phi_j^{(k)}( \mathbf x) .
\end{equation*}
These bases span the spaces
$\wh V_h^{(k)} = \mbox{span} \{\wh \phi_i^{(k)} : i=1,\ldots,n^{(k)} \}$
and
$V_h^{(k)} = \mbox{span} \{\phi_i^{(k)} : i=1,\ldots,n^{(k)} \}$.
Based on such a construction, we denote the grid size $\wh h_k$ associated to the parameter domain and the grid size $h_k$ associated to the physical patch via
\[
	\wh h_k := \max_{\delta\in\{1,2\}} \max_{i\in\{1,\ldots,n^{(k,\delta)}\}} \xi_{i+1}^{(k,\delta)}-\xi_{i}^{(k,\delta)}
	\quad\mbox{and}\quad
	h_k:= H_k \, \wh h_k
\]
and analogously the minimum grid size via
\[
	\wh h_{k,min} := \min_{\delta\in\{1,2\}} \min_{i\in\{1,\ldots,n^{(k,\delta)}:\xi_{i+1}^{(k,\delta)}\ne \xi_{i}^{(k,\delta)}\}} \xi_{i+1}^{(k,\delta)}-\xi_{i}^{(k,\delta)}
	\quad\mbox{and}\quad
	h_{k,min}:= H_k \, \wh h_{k,min}.
\]
The overall function space with grid size $h:=\max\{h_1,\ldots,h_K\}$ is then
\begin{equation}\label{globalsp}
			V_h := \{v \in H^1_0(\Omega) : v|_{\Omega_k} \in V_h^{(k)}\}.
\end{equation}
In order for this to be viable, we need that the trace spaces of $V_h$ associated to the edges between any two patches are rich enough. For standard conforming spaces, this is ensured by the concept of \emph{fully matching discretizations}, which guarantees
\begin{equation}\label{eq:if-agrees}
            V_h\Big|_{\partial\Omega_k \cap \partial\Omega_\ell} = 
			V_h^{(k)}\Big|_{\partial\Omega_k \cap \partial\Omega_\ell}
			= V_h^{(\ell)}\Big|_{\partial\Omega_k \cap \partial\Omega_\ell}
\end{equation}
for any two patches $\Omega_k$ and $\Omega_\ell$ sharing an edge.

The problem is then discretized using the Galerkin principle, leading to the following problem:
\[
		\mbox{Find } u_h\in V_h:\quad
		a(u_h,v_h) = \ell(v_h)
		\quad
		\forall\; v_h\in V_h\subset H^1_0(\Omega).
\]
An a-priori estimate for the overall discretization error is obtained via C\'ea's Lemma. If the solution satisfies the regularity assumption $u\in H^{p+1}(\Omega)$, the discretization error $\|u-u_h\|_{H^1(\Omega)}$ decays like $h^p$. For $p=1$, $H^2$ regularity is desired to obtain the full rates that can be expected for uniformly refined B-spline spaces. Regularity theorems guaranteeing $H^2$-regularity do not apply at reentrant corners and where the diffusion coefficient $\nu$ is discontinuous. For $p\geq 2$, even stronger regularity results are desired, which require even more regularity of the data. In order to recover the optimal rates (with respect to the number of degrees of freedom), adaptive meshing in terms of the grid size is the method of choice.

\section{Adaptivity and non-conforming multi patch discretization}\label{sec:3}

In order to steer adaptive refinement, we use local error estimators to assess the energy error on each of the patches. So, we compute for each patch the value
\begin{equation}\label{res:err:est}
		\eta_k^2:= h_k^2 \|f+\Div(\nu\nabla u)\|_{L^2(\Omega_k)}^2
		+ \sum_{\ell} \frac{h_k}{2} \|\bold{n} \cdot ((\nu \nabla u_h)|_{\Omega_k}-(\nu \nabla u_h)|_{\Omega_\ell})\|_{L^2(\partial\Omega_k \cap \partial\Omega_\ell)}^2,
\end{equation}
where the sum is taken for all indices $\ell$ referring to a patch $\Omega_\ell$ that shares an edge with $\Omega_k$ and $\bold{n}$ is the unit normal vector.
As a next step, we use Dörfler marking
to mark the patches to be refined. As already mentioned, we are interested in methods that preserve the local tensor-product structure on each of the patches. There are several ways this can be achieved. The simplest option would be to refine the grids on all of the marked patches. While this is possible, it does not give the possibility to obtain a sufficiently fine-grained refinement towards singularities.

So, we apply the following strategy; However, many alternatives are possible that fit in the theoretical framework presented in this paper.
For each marked patch, say $\Omega_k$, we first apply dyadic refinement, i.e., we construct a finer space $V_{h/2}^{(k)}$ by introducing new knots in the middle between any two non-equal knots in the knot vectors $\Xi^{(k,1)}$ and $\Xi^{(k,2)}$. Secondly, we subdivide (split) the marked patch into four sub-patches ${\Omega}_{k,i}:= G_{k,i}(\wh\Omega)$ with
\begin{equation}\label{eq:patchsplit}
\begin{aligned}
		G_{k,1}(x,y) &= G_k(\tfrac12x,\tfrac12y), &&
		G_{k,2}(x,y) = G_k(\tfrac12(x+1),\tfrac12y),\\
		  G_{k,2}(x,y) &= G_k(\tfrac12x,\tfrac12(y+1)), &&
		G_{k,4}(x,y) = G_k(\tfrac12(x+1),\tfrac12(y+1)).
\end{aligned}
\end{equation}
The associated function spaces are then $V_{h/2}^{(k,i)}:=\{ v|_{\tilde \Omega_{k,i}} : v\in V_{h/2}^{(k)}\}$, see Figure~\ref{fig:localrefine} for an example, where $\Omega_2$ is subdivided. After refinement, all resulting patches are again enumerated, like $\tilde\Omega_1,\ldots,\tilde\Omega_5$ in the figure.
By using this refinement strategy, the number of degrees of freedom (per patch) is kept almost unchanged; we have $\widehat h_{k,i} \approx \widehat h_k$ for $i\in\{1,2,3,4\}$. 

The diameters of the patches $\tilde{\Omega}_{k,i}$ are (up to multiplicative constants depending on $C_S$ from \thref{ass:map})  half the diameter of the patch $\Omega_k$. Indeed, we have
\begin{Lemma}
	Provided that the geometry representation satisfies \thref{ass:map} and the patch $\Omega_k$ is replaced by the patches $\tilde{\Omega}_{k,i}:=\tilde G_{k,i}(\wh\Omega)$ with $\tilde G_{k,i}$ as in~\eqref{eq:patchsplit} for $i\in\{1,2,3,4\}$, then the new geometry representation satisfies \thref{ass:map} with the same constant $C_G$ and $\tilde H_{k,i} = \tfrac12 H_k$.
\end{Lemma}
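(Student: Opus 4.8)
The key observation is that each subpatch map in \eqref{eq:patchsplit} is the composition $\tilde G_{k,i}=G_k\circ S_i$ of the original geometry map with the affine scaling
\[
S_i(x,y)=\bigl(\tfrac12 x+a_i,\ \tfrac12 y+b_i\bigr),\qquad (a_i,b_i)\in\{0,\tfrac12\}^2,
\]
which maps $\wh\Omega=(0,1)^2$ bijectively onto one of the four quarter-squares, so that in particular $S_i(\wh\Omega)\subseteq\wh\Omega$, and whose Jacobian is the constant diagonal matrix $\tfrac12 I$. The entire argument reduces to the chain rule together with this inclusion; I would carry it out in two steps, matching the two groups of inequalities in \thref{ass:map}.

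First I would treat the derivative bounds on $\wh\Omega$. Since $\nabla S_i=\tfrac12 I$ is constant and equal in both directions, an induction on $j$ using the chain rule gives, for every $0\le m\le j$,
\[
\partial_x^{m}\partial_y^{\,j-m}\tilde G_{k,i}
=2^{-j}\,\bigl(\partial_x^{m}\partial_y^{\,j-m}G_k\bigr)\circ S_i .
\]
Taking the $L^\infty(\wh\Omega)$-norm, using $S_i(\wh\Omega)\subseteq\wh\Omega$ to replace the supremum over the image by the supremum over all of $\wh\Omega$, and invoking \thref{ass:map} for $G_k$, I obtain the bound by $2^{-j}C_G H_k^{\,j}=C_G(\tfrac12 H_k)^{\,j}$ for $j=1,\dots,r_k+1$, which is exactly the required estimate with $\tilde H_{k,i}=\tfrac12 H_k$ and the \emph{same} constant $C_G$. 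For the inverse Jacobian, the chain rule gives $\nabla\tilde G_{k,i}=\tfrac12\,(\nabla G_k)\circ S_i$, hence $(\nabla\tilde G_{k,i})^{-1}=2\,\bigl((\nabla G_k)^{-1}\bigr)\circ S_i$, and the factor $2$ combines with $C_G H_k^{-1}$ to give $C_G(\tfrac12 H_k)^{-1}$, again with the same $C_G$.

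Second, I would verify the parameter-line bounds. Every parameter line of $\tilde G_{k,i}$, say $y\mapsto\tilde G_{k,i}(x,y)=G_k(\tfrac12 x+a_i,\tfrac12 y+b_i)$ for fixed $x\in[0,1]$, equals $g=\bar g\circ\sigma$, where $\bar g=G_k(\tfrac12 x+a_i,\cdot)$ is a parameter line of $G_k$ and $\sigma(t)=\tfrac12 t+b_i$ maps $(0,1)$ into $(0,1)$; the analogous statement holds for the lines in the other direction. The univariate chain rule then produces the same factor $2^{-j}$ (respectively the factor $2$ for $|\partial g|^{-1}$), and since $\sigma$ maps into a subinterval of $(0,1)$ the supremum of any derivative of $\bar g\circ\sigma$ over $(0,1)$ is bounded by the supremum of the corresponding derivative of $\bar g$ over $(0,1)$. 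Applying \thref{ass:map} for $G_k$ closes this case in the same way.

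I do not expect a genuine obstacle here: the proof is pure bookkeeping of scaling factors. The only points requiring care are (i) the exact cancellation that keeps $C_G$ unchanged — the $2^{-j}$ from differentiating through $S_i$ matches the replacement $H_k\mapsto\tfrac12 H_k$, and the single factor $2$ in the inverse Jacobian matches $H_k^{-1}\mapsto(\tfrac12 H_k)^{-1}$ — and (ii) the observation that $S_i$ (and its univariate restriction $\sigma$) maps the parameter domain (respectively the unit interval) into itself, so that restricting to the smaller image never enlarges an $L^\infty$-norm. Finally, the admissible range $j=1,\dots,r_k+1$ is inherited verbatim, so the smoothness parameter may be kept as $r_k$.
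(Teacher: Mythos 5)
Your proposal is correct and is exactly the argument the paper sketches: the paper's one-line proof (``simple scaling arguments'' together with $\|\cdot\|_{L^\infty(\tilde\Omega)}\le\|\cdot\|_{L^\infty(\wh\Omega)}$ for $\tilde\Omega\subseteq\wh\Omega$) is precisely your chain-rule bookkeeping with the affine map $S_i$, where the factors $2^{-j}$ and $2$ cancel against the replacement $H_k\mapsto\tfrac12 H_k$. You have simply written out in full, including the parameter-line case, what the authors left implicit.
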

\begin{proof}
	Follows from simple scaling arguments and $\|\cdot\|_{L^\infty(\tilde \Omega)}
	\le \|\cdot\|_{L^\infty(\wh \Omega)}$ for $\tilde \Omega \subseteq \wh \Omega$.
\end{proof}

\begin{figure}
\centering
\begin{minipage}{0.4\textwidth}
\begin{tikzpicture}
    \node[anchor=south west,inner sep=0] (image) at (0,0) 
    {\includegraphics[width=\textwidth]{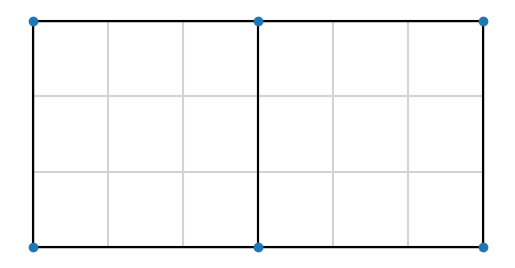}};
    \draw (1.75,1.55) node {$\Omega_1$};
    \draw (4.4,1.55) node {$\Omega_2$};
\end{tikzpicture}
\end{minipage}
\begin{minipage}{0.4\textwidth}
\begin{tikzpicture}
\node[anchor=south west,inner sep=0] (image) at (0,0) {\includegraphics[width=\textwidth]{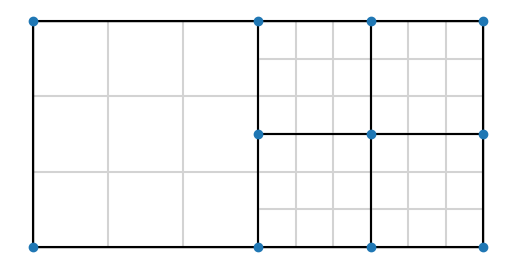}};
    \draw (1.75,1.55) node {$\Omega_1$};
    \draw (3.9,2.25) node[scale=0.75] {$\Omega^{(1,2)}_{2}$};
    \draw (5.2,2.25) node[scale=0.75] {$\Omega^{(2,2)}_{2}$};
    \draw (5.2,0.92) node[scale=0.75] {$\Omega^{(2,1)}_{2}$};
    \draw (3.9,0.92) node[scale=0.75] {$\Omega^{(1,1)}_{2}$};
    \draw (2.85,1.55) node[scale=0.75] {$\x_1$};
\end{tikzpicture}
\end{minipage}
\caption{\label{fig:localrefine}Local refinement of patch $\Omega_2$.}
\end{figure}

As already mentioned,  we assume that the initial configuration is matching, i.e., that for any two different patches, $\cl{\Omega_k}\cap \cl{\Omega_\ell}$ is either empty, a common corner or a common edge. By applying a splitting approach as introduced in~\eqref{eq:patchsplit}, this condition is not retained, since corners of one patch can be located on the edge of a neighboring patch; we call such corners \emph{T-junctions}. In the example depicted in Figure~\ref{fig:localrefine}, the vertex $\x_1$ would be an example for a T-junction. Our refinement strategy guarantees that the following assumptions are satisfied.
\begin{Assumption}\label{ass:matching}
	The intersection of the closures of any two different patches, i.e., $\cl{\Omega_k}\cap \cl{\Omega_\ell}$, is either (a) empty, (b) a vertex of (at least) one of the two patches, or (c) an edge of (at least) one of the two patches.
\end{Assumption}
\begin{Assumption}\label{ass:simple:t}
    No T-junction is located on $\partial \Omega$ and any two patches meeting in a T-junction share an edge.
\end{Assumption}

As mentioned in the last section, we assume~\eqref{eq:if-agrees} for the initial configuration. If only one of the patches adjacent to an edge is refined, the condition~\eqref{eq:if-agrees} is not satisfied anymore. However, the trace spaces of the neighboring patches are nested, cf. Figure~\ref{fig:localrefine}. So, the following assumption holds.
\begin{Assumption}\label{ass:nested}
For any two patches $\Omega_k$ and $\Omega_\ell$ sharing an edge, we have
\[
			V_h^{(k)}|_{\partial\Omega_k\cap\partial\Omega_\ell}
			\subseteq  V_h^{(\ell)}|_{\partial\Omega_k\cap\partial\Omega_\ell}
			\quad\mbox{or}\quad
			V_h^{(\ell)}|_{\partial\Omega_k\cap\partial\Omega_\ell}
			\subseteq  V_h^{(k)}|_{\partial\Omega_k\cap\partial\Omega_\ell}.
\]
\end{Assumption}
This configuration guarantees that the trace space is rich enough and does not degenerate the approximation quality of the discrete space. This assumption is also valid after any further refinement step. This is obvious if both patches adjacent to an edge are refined or if the patch is refined which already has a finer grid. \thref{ass:nested} is also true if the patch with the coarser grid is refined (like if one would refine $\Omega_1$ in the example depicted in Figure~\ref{fig:localrefine}) since the finer grid is the result of the application of the refinement procedure and the application of the same procedure to the coarser grid yields the same result.

For the approximation error estimates, we need the following assumption that guarantees that the edges are not too small, compared to the grid sizes of the adjacent patches. 
\begin{Assumption}\label{ass:compatibility}
    For any two patches $\Omega_k$ and $\Omega_\ell$, sharing an edge, the length of its pre-image is at least as large as $p$ times the local grid size, i.e., $|G_k^{-1}(\partial\Omega_k\cap \partial\Omega_\ell)| \ge p\, \wh h_k$.
\end{Assumption}
This condition can be ensured by guaranteeing that the local grid size disparity is not too large.

A minor drawback of this patch-wise splitting approach in comparison to alternative spline based local refinement methods is the increase of degrees of freedom, emerging from the introduction of additional (only continuous) interfaces. Since we assume to have at least $p^d$ knots per patch, where $d$ is the spatial dimension, this only moderately increases the number of degrees of freedom. If the number of knots were significantly larger this increase would even be negligible.

\section{Construction of a basis}\label{sec:4}

Although the definition \eqref{globalsp} of the global function space $V_h$ is simple, its representation in form of a basis is not straightforward, especially for the not fully matching case. We construct a basis $\Phi$ for the global space by building linear combinations of the basis functions in the bases $\Phi^{(k)}$ of the local spaces $V_h^{(k)}$. For this purpose we represent the continuity of functions in $V_h$ in terms of constraints.

A function $u_h$ that is patch-wise defined by $u_h|_{\Omega_k}=\sum_{i=1}^{n^{(k)}}u_i^{(k)} \phi_i^{(k)} \in V_h^{(k)}$ is in the global space $V_h$ if and only if it is continuous. The continuity can be expressed as constraints on the coefficient vectors $\underline u_h^{(k)} = (u_1^{(k)},\cdots,u_{n^{(k)}}^{(k)})^\top$.  We construct such constraints on an edge-to-edge basis, allowing also redundant constraints. Consider an edge $\Gamma_{k,\ell}:=\partial\Omega_k\cap \partial\Omega_\ell$. Both, the bases $\Phi^{(k)}$ and $\Phi^{(\ell)}$, can be restricted to that edge yielding trace bases $\Phi^{(k)}|_{\Gamma_{k,\ell}}:=\{\phi_i^{(k)}:\phi_i^{(k)}|_{\Gamma_{k,\ell}} \not\equiv 0\}$ and $\Phi^{(\ell)}|_{\Gamma_{k,\ell}}$, defined analogously. Using \thref{ass:nested}, we know that -- if the two trace bases do not agree anyway -- the basis functions in one of them, say $\Phi^{(k)}|_{\Gamma_{k,\ell}}$, can be represented as a linear combination of the basis functions in $\Phi^{(\ell)}|_{\Gamma_{k,\ell}}$:
\begin{equation}\label{eq:ecsetup0}
        \phi_i^{(k)} = \sum_{j=1}^{n^{(\ell)}} E_{i,j}^{(k,\ell)} \phi_j^{(\ell)}
        \quad\mbox{holds on $\Gamma_{k,\ell}$ for all $\phi_i^{(k)} \in \Phi^{(k)}|_{\Gamma_{k,\ell}}$.}
\end{equation}
The coefficients $E_{i,j}^{(k,\ell)}$ are non-negative and can be constructed by knot insertion algorithms, \cite{deBoor1972}.
Since B-splines interpolate on the boundary, the continuity of the mentioned function $u_h$ can be expressed by
\begin{equation}\label{eq:ecsetup}
		u_i^{(k)} - \sum_{j=1}^{n^{(\ell)}} E_{i,j}^{(k,\ell)} u_j^{(\ell)} = 0
        \quad
        \mbox{for all $\Omega_k$ and $\Omega_\ell$ sharing an edge and all $i$ with $\phi_i^{(k)} \in \Phi^{(k)}|_{\Gamma_{k,\ell}}$.}
\end{equation}
All of these constraints can be collected in matrices $C_k$ such that~\eqref{eq:ecsetup} is equivalent to
\[
        \sum_{k=1}^K C_k \underline u_h^{(k)} = 0.
\]

By collecting $C:=(C_1,\ldots,C_K)$ and $\underline u_h^\top = (\underline u_h^{(1)\top},\cdots,\underline u_h^{(K)\top})^\top \in \RR^{\npw}$, the condition reduces to
\[
			C \underline u_h = 0.
\]
By construction, we know that each row of $C$ has exactly one positive coefficient, which is always $1$.  Since the constraints might by redundant, the matrix might not have full rank.

As mentioned, the basis functions $\phi_i$ in the global basis $\Phi$ are linear combinations of the local basis functions:
\[
	\phi_i|_{\Omega^{(k)}} =
    \sum_{j=1}^{n^{(k)}}
    B^{(k)}_{i,j} \phi_{j}^{(k)}
\]
with coefficients $B^{(k)}_{i,j}$ to be determined. Using
\[
    B_k:= [B^{(k)}_{i,j}]_{i=1,\ldots,n}^{j=1,\ldots,n^{(k)}} \in \RR ^{ n \times n^{(k)} }
    \quad\mbox{and}\quad
    B=(B_1^\top,\cdots,B_K^{\top})^\top  \in \RR ^{ n \times  \npw },
\]
the problem of finding a basis can be rewritten as a problem on matrices. We are interested in a (full-rank) matrix $B$ whose image is the nullspace of the matrix $C$, namely
\begin{equation}\label{eq:BC}
		\nullspace  C = \image  B.
\end{equation}
Moreover, it should satisfy the desirable properties of the B-spline bases, namely the partition of unity, the non-negativity and the local support. These conditions translate to conditions on the matrix $B$:
\begin{align}
		\label{eq:partition:of:unity}
        \sum\nolimits_{j=1}^{n} B_{i,j} = 1 & \qquad\mbox{for all $i=1,\ldots,\npw$ (partition of unity),} \\
		\label{eq:non:negativity}
        B_{i,j} \ge 0 & \qquad \mbox{for all $i=1,\ldots,\npw$ and $j=1,\ldots,n$ (non-negativity).}
\end{align}
Moreover, the basis function should have a uniformly bounded support, i.e., each row of $B$ should only have a limited number of non-zero coefficients.

For simple setups, it is possible to derive a matrix $B$ satisfying these properties directly, i.e., it is possible to specify the resulting basis explicitly, this approach is hard to generalize to more involved situations, like three dimensional problems. Moreover, such direct approaches tend to be hard to implement. The following algorithm allows to construction of the matrix $B$ from the constraint matrix $C$. The matrix is a variant of Gaussian elimination, where the row is chosen in a way such that a rather small fill-in is expected.

\begin{Algorithm} $ $\label{alg:basis}
\begin{itemize}
    \item Let $B^{(0)}\in \RR^{\npw\times \npw}$ be an identity matrix.
    \item For $\nu=1,2,\ldots$ loop until $C B^{(\nu)} = 0$:
    \begin{itemize}
        \item Let
        $\mathcal F_m(C) := \{ n: |C_{m,n}|\ne 0, \; C_{m,n}C_{m,j} \le 0
        				\; \mbox{for all} \; j\ne n \}$.
        Choose $m_\nu$ and $n_\nu$ such that
		\begin{equation}\label{eq:alg:rowcond}
		n_\nu\in \mathcal F_{m_\nu}(CB^{(\nu-1)})
		\end{equation}
        and set
        \begin{equation}\label{eq:Rdef}
        		R^{(\nu)} := I - \frac1{\underline e_{m_\nu}^\top CB^{(\nu-1)}\underline e_{n_\nu}} \underline e_{n_\nu} \underline e_{m_\nu}^\top CB^{(\nu-1)}.
        \end{equation}
        \item Set $B^{(\nu)}:= B^{(\nu-1)} R^{(\nu)}$.
    \end{itemize}
    \item Return the matrix $B$ composed of the non-zero columns of $B^{(N)}$, where $N$ is the last index $\nu$ of the loop (i.e., such that $CB^{(N)}=0$).
\end{itemize}
\end{Algorithm}
The set $\mathcal{F}_m(C)$ contains the index of the sole positive coefficient of the $i$-th row (if existing) and the index of the sole negative coefficient of the $i$-th row (if existing). The set might be empty, if the $i$-th row vanishes or of there are several positive and several negative coefficients in that row. We will show later that there is at least one row $m_\nu$ such that $\mathcal F_{m_\nu}(CB^{(\nu-1)})$ is not empty (\thref{lem:sign:change}).
First, we show that the derived representation of the global basis has the desired properties.

\begin{Lemma}
    The returned matrix $B$ has full rank and its image spans the nullspace of $C$, i.e.,  \eqref{eq:BC}~holds.
\end{Lemma}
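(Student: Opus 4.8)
The lemma asserts that Algorithm~\ref{alg:basis} terminates and produces a full-rank matrix $B$ whose columns span $\nullspace C$. Let me think about the structure.

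The algorithm starts with $B^{(0)} = I$ and iteratively right-multiplies by $R^{(\nu)}$. Each $R^{(\nu)} = I - \frac{1}{c}\underline e_{n_\nu}\underline e_{m_\nu}^\top C B^{(\nu-1)}$ where $c = \underline e_{m_\nu}^\top CB^{(\nu-1)}\underline e_{n_\nu}$ is the $(m_\nu, n_\nu)$ entry of $CB^{(\nu-1)}$.

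Let me understand $R^{(\nu)}$. Write $v^\top = \underline e_{m_\nu}^\top C B^{(\nu-1)}$, the $m_\nu$-th row of $CB^{(\nu-1)}$. Then $R^{(\nu)} = I - \frac{1}{v_{n_\nu}} \underline e_{n_\nu} v^\top$. This is a column operation: it zeros out the $m_\nu$-th row of $CB^{(\nu)}$.

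Check: $CB^{(\nu)} = CB^{(\nu-1)}R^{(\nu)} = CB^{(\nu-1)} - \frac{1}{v_{n_\nu}}(CB^{(\nu-1)}\underline e_{n_\nu})v^\top$. The $m_\nu$-th row becomes $v^\top - \frac{1}{v_{n_\nu}}v_{n_\nu} v^\top = 0$. Good—this is Gaussian elimination on columns, using column $n_\nu$ as pivot to clear... wait, it clears a whole row.

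Actually this zeros the entire $m_\nu$-th row of $CB^{(\nu)}$. So each step eliminates one constraint row. The termination condition is $CB^{(N)} = 0$.

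**Key observations for the proof.**

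*Invariant 1: $\image B^{(\nu)} \subseteq \nullspace$... no.* Let me think about what's preserved.

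Note $R^{(\nu)}$ is invertible: it's $I$ minus a rank-one term, and $\det(I - \frac{1}{v_{n_\nu}}\underline e_{n_\nu}v^\top) = 1 - \frac{1}{v_{n_\nu}}v^\top \underline e_{n_\nu} = 1 - \frac{v_{n_\nu}}{v_{n_\nu}} = 0$.

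**$R^{(\nu)}$ is singular!** Its rank drops by one. So $\image B^{(\nu)}$ shrinks. That makes sense: we're projecting out directions.

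So the key invariants I'd establish:

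$\image B^{(\nu)} \supseteq \nullspace C$ for all $\nu$. At termination $CB^{(N)}=0$ means $\image B^{(N)} \subseteq \nullspace C$, giving equality.

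**Proof plan.**

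I'll verify the lemma through the following steps.

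\begin{Proof}
\emph{Step 1 (Each $R^{(\nu)}$ is a projection fixing $\nullspace C$).}
Write $w_\nu^\top := \underline e_{m_\nu}^\top C B^{(\nu-1)}$ and $c_\nu := w_\nu^\top \underline e_{n_\nu}$, so by \eqref{eq:alg:rowcond} together with $n_\nu \in \mathcal F_{m_\nu}(CB^{(\nu-1)})$ we have $c_\nu \neq 0$, ensuring $R^{(\nu)}$ in \eqref{eq:Rdef} is well defined. A direct computation gives $R^{(\nu)} = I - c_\nu^{-1}\underline e_{n_\nu} w_\nu^\top$, and
\[
    (R^{(\nu)})^2 = I - 2 c_\nu^{-1}\underline e_{n_\nu} w_\nu^\top + c_\nu^{-2}\underline e_{n_\nu}(w_\nu^\top \underline e_{n_\nu})w_\nu^\top = R^{(\nu)},
\]
so each $R^{(\nu)}$ is an (oblique) projection. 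Moreover, if $x \in \nullspace C$ then $w_\nu^\top x = \underline e_{m_\nu}^\top C B^{(\nu-1)} x$. I claim $B^{(\nu-1)}x = x$ for all such $x$, which I prove jointly with Step 2 by induction.

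\emph{Step 2 (Invariant $\nullspace C \subseteq \image B^{(\nu)}$, with $B^{(\nu)}|_{\nullspace C} = \mathrm{id}$).}
For $\nu=0$, $B^{(0)}=I$ and the claim is trivial. Assume $B^{(\nu-1)}x = x$ for all $x \in \nullspace C$. Then $w_\nu^\top x = \underline e_{m_\nu}^\top C x = 0$, hence
\[
    B^{(\nu)} x = B^{(\nu-1)} R^{(\nu)} x = B^{(\nu-1)}\bigl(x - c_\nu^{-1}\underline e_{n_\nu}(w_\nu^\top x)\bigr) = B^{(\nu-1)} x = x.
\]
Thus $\nullspace C \subseteq \image B^{(\nu)}$ and $B^{(\nu)}$ restricts to the identity on $\nullspace C$, completing the induction and closing Step 1.

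\emph{Step 3 (Termination).}
Observe $CB^{(\nu)} = CB^{(\nu-1)} R^{(\nu)}$ has vanishing $m_\nu$-th row, since its $m_\nu$-th row equals $w_\nu^\top - c_\nu^{-1}(w_\nu^\top\underline e_{n_\nu})w_\nu^\top = 0$. Furthermore, right-multiplication by $R^{(\nu)}$ cannot reintroduce a nonzero entry in any row that was already zero: if $\underline e_{m}^\top C B^{(\nu-1)} = 0$ then $\underline e_m^\top CB^{(\nu)} = 0 \cdot R^{(\nu)} = 0$. Hence the set of zero rows of $CB^{(\nu)}$ strictly increases at each step. As $C$ has finitely many rows, the loop terminates after finitely many steps $N$ with $CB^{(N)} = 0$. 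Here I invoke \thref{lem:sign:change}, which guarantees that as long as $CB^{(\nu-1)} \neq 0$ some admissible pivot row $m_\nu$ with $\mathcal F_{m_\nu}(CB^{(\nu-1)}) \neq \emptyset$ exists, so the loop can always proceed.

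\emph{Step 4 (Equality of spaces).}
At termination $CB^{(N)} = 0$ means every column of $B^{(N)}$ lies in $\nullspace C$, so $\image B^{(N)} \subseteq \nullspace C$. Combined with Step 2 this yields $\image B^{(N)} = \nullspace C$. The returned matrix $B$ consists of the nonzero columns of $B^{(N)}$, so $\image B = \image B^{(N)} = \nullspace C$, which is \eqref{eq:BC}.

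\emph{Step 5 (Full rank).}
It remains to show the nonzero columns of $B^{(N)}$ are linearly independent. Suppose $B^{(N)} z = 0$; I show $z$ is supported only on the zero columns. Since $\image B^{(N)} = \nullspace C$, the rank of $B^{(N)}$ equals $\dim \nullspace C$. The number of nonzero columns is at least $\dim\nullspace C$ because their span is all of $\nullspace C$; on the other hand $\rank B^{(N)} \le$ (number of nonzero columns). If these two numbers coincide—i.e. if the number of nonzero columns equals $\dim\nullspace C$—the nonzero columns form a basis and $B$ has full column rank. The remaining point is that the elimination produces exactly $\dim\nullspace C$ nonzero columns; this follows because each projection $R^{(\nu)}$ sets the $n_\nu$-th column of $B^{(\nu)}$ in a way tied to the eliminated row, and a careful count (using that distinct pivots are chosen and that $\rank CB^{(\nu)}$ drops by exactly one per step) shows the column rank never collapses below the number of surviving nonzero columns. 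Removing the zero columns then leaves a full-rank $B$.
\end{Proof}

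**The main obstacle.** Steps 1--4 are clean linear algebra and termination is straightforward once \thref{lem:sign:change} is granted. The genuinely delicate part is Step 5: showing that after discarding zero columns the matrix $B$ has *full column rank*, i.e. that the number of surviving nonzero columns equals $\dim\nullspace C$ rather than exceeding it. One must argue that the elimination does not leave redundant (linearly dependent) nonzero columns. I'd pin this down by tracking how $\rank CB^{(\nu)}$ decreases by exactly one per eliminated row while $\rank B^{(\nu)}$ stays at $\dim\nullspace C + (\text{remaining nonzero rows of } CB^{(\nu)})$, and verifying the pivot-column bookkeeping so that exactly the right number of columns become zero.
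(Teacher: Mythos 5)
Your Steps~1--4 are correct, and for the image identity $\image B = \nullspace C$ you take a genuinely different route than the paper: instead of rank counting, you establish the invariant that every $B^{(\nu)}$ acts as the identity on $\nullspace C$, which immediately yields $\nullspace C \subseteq \image B^{(N)}$ and, combined with $CB^{(N)}=0$, equality of the two spaces. (The paper instead proves $\rank B^{(N)} + \rank C \ge \npw$ by showing that $\rank CB^{(\nu)}$ drops by at least one per step; your invariant is arguably cleaner for this half.) However, Step~5 --- the \emph{full-rank} half of the lemma --- is not a proof. You correctly reduce full column rank to the claim that the number of nonzero columns of $B^{(N)}$ equals $\dim \nullspace C$, but then only assert that ``a careful count \dots\ shows'' this, explicitly deferring the bookkeeping. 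As written, nothing in your argument excludes that $B^{(N)}$ retains more than $\dim\nullspace C$ nonzero columns, i.e., linearly dependent surviving columns; so the first assertion of the lemma remains unproven.

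The gap closes with three short observations, all of which appear (in compressed form) in the paper's proof. (i) Step $\nu$ annihilates the pivot column: $B^{(\nu)}\underline e_{n_\nu} = B^{(\nu-1)}\bigl(\underline e_{n_\nu} - c_\nu^{-1}\underline e_{n_\nu}(w_\nu^\top \underline e_{n_\nu})\bigr) = 0$ since $w_\nu^\top\underline e_{n_\nu}=c_\nu$. (ii) Zero columns persist and are never picked as pivots again: if $B^{(\nu)}\underline e_j = 0$, then the $j$-th column of $CB^{(\nu)}$ vanishes, so $w_{\mu}^\top \underline e_j = 0$ at every later step $\mu$, whence $B^{(\mu)}\underline e_j = 0$, and $j \notin \mathcal F_m(CB^{(\mu)})$ for any $m$ because membership in $\mathcal F_m$ requires a nonzero entry; hence the $N$ pivot indices $n_1,\ldots,n_N$ are distinct and $B^{(N)}$ has at least $N$ zero columns. (iii) Sylvester's rank inequality applied to $CB^{(\nu)} = \bigl(CB^{(\nu-1)}\bigr)R^{(\nu)}$ with $\rank R^{(\nu)} = \npw - 1$ gives $\rank CB^{(\nu)} \ge \rank CB^{(\nu-1)} - 1$, so the termination condition $CB^{(N)}=0$ forces $N \ge \rank C$. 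Consequently $B$ has at most $\npw - N \le \npw - \rank C$ columns, while your Step~4 gives $\rank B = \dim\nullspace C = \npw - \rank C$; the two bounds meet, so $B$ has full column rank (and in fact exactly $N=\rank C$ columns vanish). The paper runs the same count in the opposite order: Sylvester applied to $B^{(\nu)} = B^{(\nu-1)}R^{(\nu)}$ yields $\rank B^{(N)} \ge \npw - N$, the vanishing pivot columns bound the column count by $\npw - N$, proving full rank first, and then $\rank C \ge N$ is used to conclude the image identity that you obtained directly.
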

\begin{Proof}
	Recall Sylvester's matrix rank inequality, which states $\rank (M_1M_2)\ge \rank M_1+\rank M_2-n$ for any two matrices $M_1,M_2 \in \RR^{n\times n}$.
    It holds that $\rank R^{(\nu)} = \npw-1$ and we have $\rank B^{(\nu)} \ge \rank B^{(\nu-1)} -1 \ge \rank B^{(0)}-\nu = \npw-\nu$. Since $B$ consists of the non-vanishing columns of $B^{(N)}$, we know $\rank B = \rank B^{(N)} \ge \npw-N$. By construction, in each elimination step one column $B^{(\nu-1)}$ is eliminated, at least $N$ columns of $B^{(N)}$ vanish and thus
	\begin{equation*}
			\rank B = \rank B^{(N)} = \npw-N
			\qquad\mbox{and}\qquad
			B \in \RR^{\npw\times (\npw-N)},
	\end{equation*}
	which already shows that $B$ has full rank.
	
	Since $CB^{(N)}=0$ and $B$ is composed of the non-zero columns of $B^{(N)}$, we have
	\begin{equation*}
			\image B = \image B^{(N)} \subseteq \nullspace C.
	\end{equation*}
	Now, we show that these are equal by showing $\rank B^{(N)} + \rank C \ge \npw$. Using
	\[
			CB^{(\nu)}
			= \left(I-\frac{1}{\underline e_{m_\nu}^\top C B^{(\nu-1)} \underline e_{n_\nu}}
			C B^{(\nu-1)} \underline e_{n_\nu}\underline e_{m_\nu}^\top \right) C B^{(\nu-1)},
	\]
	where $m_\nu$ and $n_\nu$ are as chosen in iterate $\nu$ in the algorithm,
	we know $\nullspace (CB^{(\nu-1)})\subseteq \nullspace (CB^{(\nu)})$ and $\underline e_{n_\nu} \in \nullspace (CB^{(\nu)})$. Since~\eqref{eq:alg:rowcond} guarantees that $\underline e_{m_\nu}^\top CB^{(\nu-1)} \underline e_{n_\nu} \ne 0$, we know that $\underline e_{n_\nu} \not\in \nullspace (CB^{(\nu-1)})$. Thus, $\rank  CB^{(\nu)} \le \rank  CB^{(\nu-1)}-1 \le \rank C-\nu$.  By further using $C B^{(N)}=0$, we have $\rank C\ge N$. Since we have already shown $\rank B^{(N)} \ge \npw-N$, this shows $\rank B^{(N)} + \rank C \ge \npw$ and thus finishes the proof.
\end{Proof}

\begin{Lemma}
    The returned matrix $B$ has only non-negative coefficients, i.e., \eqref{eq:non:negativity}~holds.
\end{Lemma}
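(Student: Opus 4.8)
The plan is to prove~\eqref{eq:non:negativity} by induction on the iteration counter $\nu$, tracking how the rank-one factor $R^{(\nu)}$ transforms the columns of $B^{(\nu-1)}$. The base case is trivial, since $B^{(0)}$ is the identity and hence non-negative.

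For the inductive step I would first rewrite the update in column form. Writing $D:=CB^{(\nu-1)}$ and $\beta:=D_{m_\nu,n_\nu}=\underline e_{m_\nu}^\top D\,\underline e_{n_\nu}$, which is non-zero by~\eqref{eq:alg:rowcond}, the definition~\eqref{eq:Rdef} gives the rank-one modification
\[
B^{(\nu)} = B^{(\nu-1)} - \tfrac{1}{\beta}\,\big(B^{(\nu-1)}\underline e_{n_\nu}\big)\big(\underline e_{m_\nu}^\top D\big),
\]
so that the $t$-th column reads
\[
B^{(\nu)}\underline e_t = B^{(\nu-1)}\underline e_t - \frac{D_{m_\nu,t}}{D_{m_\nu,n_\nu}}\,B^{(\nu-1)}\underline e_{n_\nu}.
\]
For $t=n_\nu$ the coefficient equals $1$ and the column vanishes (this is precisely the column eliminated in step $\nu$); for every other $t$ the new column is obtained from the non-negative column $B^{(\nu-1)}\underline e_t$ by adding a scalar multiple of the non-negative column $B^{(\nu-1)}\underline e_{n_\nu}$.

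The heart of the argument is then to check that this scalar multiple is non-negative, which is exactly what the pivot rule enforces. Since $n_\nu\in\mathcal F_{m_\nu}(D)$, the defining sign condition yields $D_{m_\nu,n_\nu}D_{m_\nu,t}\le 0$ for all $t\ne n_\nu$; dividing by $D_{m_\nu,n_\nu}^2>0$ shows $-D_{m_\nu,t}/D_{m_\nu,n_\nu}\ge 0$. Thus each column of $B^{(\nu)}$ is a non-negative combination of non-negative columns of $B^{(\nu-1)}$ and stays non-negative, closing the induction. Finally, $B$ is composed of a subset (the non-zero columns) of $B^{(N)}\ge 0$, so~\eqref{eq:non:negativity} holds.

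I do not anticipate a genuine obstacle. The only points requiring care are translating the matrix identity~\eqref{eq:Rdef} into the column-update formula and recognizing that the membership condition $n_\nu\in\mathcal F_{m_\nu}$ was designed precisely so that this pivot step --- a sign-controlled variant of Gaussian elimination --- preserves non-negativity. Everything else is routine bookkeeping, and the non-negativity of the starting matrix $B^{(0)}$ together with the non-negativity of the eliminated columns propagates through all iterations without incident.
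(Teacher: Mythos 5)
Your proof is correct and follows essentially the same route as the paper: the paper simply observes that the pivot condition~\eqref{eq:alg:rowcond} makes every $R^{(\nu)}$ entrywise non-negative, so that $B^{(N)}=R^{(1)}\cdots R^{(N)}$ inherits non-negativity, and your column-update argument is exactly this fact read column by column, with the sign computation $-D_{m_\nu,t}/D_{m_\nu,n_\nu}\ge 0$ spelled out explicitly. The extra detail you supply (the column formula, the vanishing of column $n_\nu$, the induction) is a faithful elaboration of the paper's two-line proof rather than a different argument.
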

\begin{Proof}
    \eqref{eq:alg:rowcond} and the definition of $R^{(\nu)}$ guarantee that all its coefficients are non-negative. This result carries over to $B^{(N)}=R^{(1)}\cdots R^{(N)}$ and $B$ being composed of the non-vanishing columns of $B^{(N)}$.
\end{Proof}

\begin{Lemma}
    The returned matrix $B$ represents a partition of unity, i.e., \eqref{eq:partition:of:unity}~holds.
\end{Lemma}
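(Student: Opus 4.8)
The plan is to show that every row of the returned matrix $B$ sums to one, i.e. $B\mathbf 1 = \mathbf 1$ with $\mathbf 1$ the all-ones vector of the appropriate length; this is precisely \eqref{eq:partition:of:unity}. Since $B$ arises from $B^{(N)}\in\RR^{\npw\times\npw}$ by discarding its vanishing columns, and a column of zeros contributes $0$ to each row sum, the $i$-th row sum of $B$ equals the $i$-th row sum of $B^{(N)}$. Hence it suffices to prove $B^{(N)}\mathbf 1 = \mathbf 1$, which I would obtain from the loop invariant $B^{(\nu)}\mathbf 1 = \mathbf 1$ for every $\nu = 0,1,\ldots,N$, established by induction. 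The base case is immediate from $B^{(0)}=I$.

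For the induction step the engine is that each elimination factor fixes the all-ones vector, $R^{(\nu)}\mathbf 1 = \mathbf 1$. Writing $\mathbf c^\top := \underline e_{m_\nu}^\top C B^{(\nu-1)}$ for the pivot row and $\alpha := \underline e_{m_\nu}^\top C B^{(\nu-1)}\underline e_{n_\nu} = \mathbf c^\top \underline e_{n_\nu}$ for the (nonzero) pivot, the definition \eqref{eq:Rdef} gives
\[
    R^{(\nu)}\mathbf 1 = \mathbf 1 - \tfrac{1}{\alpha}\,\underline e_{n_\nu}\,(\mathbf c^\top \mathbf 1),
\]
so $R^{(\nu)}\mathbf 1 = \mathbf 1$ as soon as the pivot row has zero sum, $\mathbf c^\top\mathbf 1 = 0$. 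This quantity is the $m_\nu$-th entry of $C B^{(\nu-1)}\mathbf 1$, so it is enough to know $C B^{(\nu-1)}\mathbf 1 = 0$. The inductive hypothesis $B^{(\nu-1)}\mathbf 1 = \mathbf 1$ reduces this to the single identity $C\mathbf 1 = 0$, after which $B^{(\nu)}\mathbf 1 = B^{(\nu-1)}R^{(\nu)}\mathbf 1 = B^{(\nu-1)}\mathbf 1 = \mathbf 1$ closes the induction.

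Everything therefore hinges on $C\mathbf 1 = 0$, i.e. that every row of the constraint matrix sums to zero, and this is the step I expect to carry the real content. I would argue it conceptually rather than by manipulating the coefficients $E_{i,j}^{(k,\ell)}$ directly: by construction $\nullspace C$ is exactly the set of coefficient vectors of globally continuous piecewise functions, since the rows of $C$ encode the interface continuity conditions \eqref{eq:ecsetup}. The constant function $u_h\equiv 1$ is continuous, and by the partition-of-unity property of each tensor-product B-spline basis it satisfies $\sum_{i=1}^{n^{(k)}}\phi_i^{(k)} = 1$ on every patch $\Omega_k$; hence its coefficient vector is precisely the all-ones vector $\mathbf 1\in\RR^{\npw}$. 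Consequently $\mathbf 1\in\nullspace C$, that is $C\mathbf 1 = 0$. Equivalently, on each shared edge the relevant extension coefficients sum to one because knot insertion preserves the partition of unity, which is exactly what forces each row of $C$ to sum to zero.

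Combining $B^{(N)}\mathbf 1 = \mathbf 1$ with the row-sum-preserving deletion of the vanishing columns then yields $B\mathbf 1 = \mathbf 1$, i.e. \eqref{eq:partition:of:unity}. The only genuinely delicate point is the justification of $C\mathbf 1 = 0$: it rests on recognizing that the all-ones vector represents the constant function via the local partition of unity and that the continuity constraints admit the constant as a solution. Notably, neither the pivoting rule \eqref{eq:alg:rowcond} nor the non-negativity of $B$ enters this argument.
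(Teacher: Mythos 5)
Your proof is correct and follows essentially the same route as the paper: both arguments rest on $C\mathbf{1}=0$ (which the paper justifies via the unit row sums of the embedding matrices $E^{(k,\ell)}$, the same partition-of-unity fact you invoke through the constant function), an induction over the elimination steps showing the all-ones vector is preserved, and the observation that deleting zero columns of $B^{(N)}$ leaves the row sums unchanged. The only cosmetic difference is that you carry the single invariant $B^{(\nu)}\mathbf{1}=\mathbf{1}$, whereas the paper maintains $CB^{(\nu)}\underline e=0$ and deduces $R^{(\nu)}\underline e=\underline e$ from it --- the same computation reorganized.
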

\begin{Proof}
	Let $\underline e = (1,\ldots,1)^\top$.
	First,  we show that
	\begin{equation}\label{eq:cbnue}
			CB^{(\nu)}\underline e =0\quad \mbox{for all}\quad\nu=0,\ldots,N.
	\end{equation}
    by induction. Since the row-sum of the embedding matrices $E^{(k,\ell)}$ is $1$, we know by construction~\eqref{eq:ecsetup} that $C \underline e=0$, i.e., \eqref{eq:cbnue} for $\nu=0$. Assuming $CB^{(\nu-1)} \underline e=0$ for some $\nu$, we have
    \[
    		CB^{(\nu)}\underline e
    		= C B^{(\nu-1)}\underline e-\frac{1}{\underline e_{m_\nu}^\top C B^{(\nu-1)} \underline e_{n_\nu}}
    		C B^{(\nu-1)} \underline e_{n_\nu}\underline e_{m_\nu}^\top  C B^{(\nu-1)}\underline e=0,
    \]
    i.e., \eqref{eq:cbnue}. Using~\eqref{eq:cbnue} and~\eqref{eq:Rdef}, we know that
    $R^{(\nu)} \underline e = \underline e$. By induction, this shows $B^{(N)} \underline e = R^{(N)} \cdots R^{(1)} \underline e = \underline e$. Since $B$ consists of the non-zero columns of $B^{(N)}$, the desired result immediately follows.
\end{Proof}

Now, we give a result that allows us to show that in each step of the Algorithm, there is at least one row $m_\nu$ such that $\mathcal F_{m_\nu}(CB^{(\nu-1)})$ is not empty.
\begin{Lemma}\label{lem:eta:exists}
	For any $\mathcal M \subseteq \{1,\ldots,M\}$ with $|\mathcal M|\le \rank C$, there are $(\eta_m)_{m\in\mathcal M}$ such that
	\begin{align}\nonumber
			& \eta_m \in \mathcal F_m(C)
			&&\quad\mbox{for all}\quad
			m\in \mathcal M \\
			\label{eq:eta:unique}
			&\eta_m \ne \eta_{\tilde m}
			&& \quad\mbox{for all}\quad
			m\ne \tilde m, \; m,\tilde m\in\mathcal M.
	\end{align}
\end{Lemma}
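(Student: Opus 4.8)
The statement asks for a system of distinct representatives (a transversal) for the finite family of index sets $(\mathcal F_m(C))_{m\in\mathcal M}$, so the natural tool is Hall's marriage theorem. I would set up the bipartite graph whose left vertices are the rows $m\in\mathcal M$, whose right vertices are the columns $n\in\{1,\dots,\npw\}$, and which contains the edge $\{m,n\}$ precisely when $n\in\mathcal F_m(C)$. A family $(\eta_m)_{m\in\mathcal M}$ satisfying both required properties (in particular~\eqref{eq:eta:unique}) is exactly a matching saturating the left vertices, and by Hall's theorem such a matching exists if and only if the Hall condition
\[
    \Big| \bigcup_{m\in\mathcal S} \mathcal F_m(C) \Big| \ge |\mathcal S|
    \qquad\text{for every } \mathcal S\subseteq\mathcal M
\]
holds. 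Everything therefore reduces to verifying this condition, and the hypothesis $|\mathcal M|\le\rank C$ must be what makes it true.

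To verify the Hall condition I would exploit the sign structure defining $\mathcal F_m(C)$. By construction every non-vanishing row of $C$ has a unique positive entry, and the column of that entry always lies in $\mathcal F_m(C)$; hence the positive columns of all rows of $\mathcal S$ are automatically contained in $N(\mathcal S):=\bigcup_{m\in\mathcal S}\mathcal F_m(C)$. If these positive columns were all distinct the Hall condition would be immediate, so the only way it can fail is that several rows share a positive column. The essential consequence of $\mathcal F_m(C)\subseteq N(\mathcal S)$ is that, \emph{restricted to the columns outside $N(\mathcal S)$, every row of $\mathcal S$ carries only non-positive entries}, since its sole positive entry already sits inside $N(\mathcal S)$. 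Combined with the vanishing row sums (each row sums to zero, so the negative mass on the slave columns equals $-1$), this pins down how much mass of each row can live outside $N(\mathcal S)$ and sets up a rank argument.

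The step I expect to be the crux is turning a hypothetical Hall violation into a contradiction with $|\mathcal M|\le\rank C$. I would argue by contraposition: if no transversal existed, the deficiency form of Hall's theorem would produce $\mathcal S\subseteq\mathcal M$ with $|N(\mathcal S)|<|\mathcal S|$, and I would then use the uniform sign pattern of the rows of $\mathcal S$ on the complement of $N(\mathcal S)$ to exhibit a nontrivial linear dependence among these rows, bounding $\rank C$ strictly below $|\mathcal M|$ and contradicting the hypothesis. An equivalent and perhaps more robust route is an inductive augmenting-path argument: process the rows of $\mathcal M$ one at a time, and whenever a newly added row cannot be assigned a still-free column, use that the pivot columns selected so far are linearly independent (a Sylvester-type rank count in the spirit of the proof of~\eqref{eq:BC}) to produce an augmenting path, with $|\mathcal M|\le\rank C$ guaranteeing that the matching can always be enlarged. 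In either route the delicate point is precisely the bridge between the purely combinatorial column shortage and the algebraic rank, i.e.\ showing that a deficiency in $N(\mathcal S)$ is necessarily accompanied by a rank drop; this is exactly where the special structure of $C$ (one positive entry per row, all remaining entries of the opposite sign, and vanishing row sums) is indispensable.
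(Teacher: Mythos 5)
Your Hall's-theorem reformulation is accurate as far as it goes: a family $(\eta_m)_{m\in\mathcal M}$ with $\eta_m\in\mathcal F_m(C)$ and \eqref{eq:eta:unique} is precisely a system of distinct representatives, and Hall's condition is the right equivalent criterion. But the verification of that condition---which you yourself flag as the crux---is missing, and neither bridge you sketch can close it. First, your intended contradiction does not follow: a nontrivial linear dependence among the rows of a deficient set $\mathcal S\subseteq\mathcal M$ says nothing against the hypothesis $|\mathcal M|\le \rank C$, because $\rank C$ is the rank of the \emph{full} constraint matrix and can be made arbitrarily large by rows outside $\mathcal M$. Indeed, the T-junction configuration of Figure~\ref{fig:dof}~(c) produces three rows---two rows whose sole positive entry sits at a corner degree of freedom $c_1$ resp.\ $c_2$ and which (whenever they carry at least two negative coefficients) have $\mathcal F=\{c_1\}$ resp.\ $\{c_2\}$, plus the corner-to-corner row with $\mathcal F=\{c_1,c_2\}$---so this triple is Hall-deficient ($|N(\mathcal S)|=2<3$) and admits no transversal, even though its cardinality is $\le\rank C$ as soon as $C$ has three independent rows elsewhere. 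What actually powers the conclusion is that the rows indexed by $\mathcal M$ are linearly \emph{independent}: this is exactly how the lemma is invoked in the proof of \thref{lem:sign:change}, and it is what the paper's own argument tacitly uses when it says that one of the T-junction constraints is redundant and hence cannot all belong to $\mathcal M$. Your contraposition would have to land on that independence, not on the cardinality bound.

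Second, even with independence in hand, your key implication ``Hall deficiency $\Rightarrow$ linear dependence among the rows of $\mathcal S$'' is false for a general matrix with one positive entry per row and vanishing row sums: two rows sharing their unique positive column but with disjoint negative supports (each with at least two negative entries, so that $\mathcal F$ reduces to the positive column) are linearly independent, yet $|N(\mathcal S)|=1<2$. Ruling such configurations out requires precisely the structural case analysis that constitutes the paper's proof of \thref{lem:eta:exists}: rows stemming from \eqref{eq:ecsetup} for edge-interior degrees of freedom have pairwise distinct positive columns, so $\eta_m:=p_m$ works there; corner-to-corner rows have exactly one positive and one negative coefficient and close up into cycles around a vertex, where a cyclic assignment resolves the clashes; and at a T-junction the only deficient family is the linearly dependent triple above, which an independent $\mathcal M$ cannot contain. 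In short, Hall's theorem repackages the statement elegantly but buys no shortcut: the bridge from column shortage to rank drop holds only for this particular $C$, and establishing it amounts to redoing the paper's geometric case distinction inside the Hall-condition verification. The same objection applies to your augmenting-path variant, where ``$|\mathcal M|\le\rank C$ guarantees the matching can be enlarged'' is asserted but is exactly the unproven point.
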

\begin{Proof}
 	First, we observe that the construction~\eqref{eq:ecsetup} guarantees that each row $m$ of $C$ has exactly one positive entry $p_m$, i.e., we have $C_{m,p_m}>0$, and thus $p_m\in \mathcal F_m(C)$. For a constraint that is not related to corner or T-junction, this is depicted in Figure~\ref{fig:dof}~(a). For this case, we choose $\eta_m:=p_m$.

 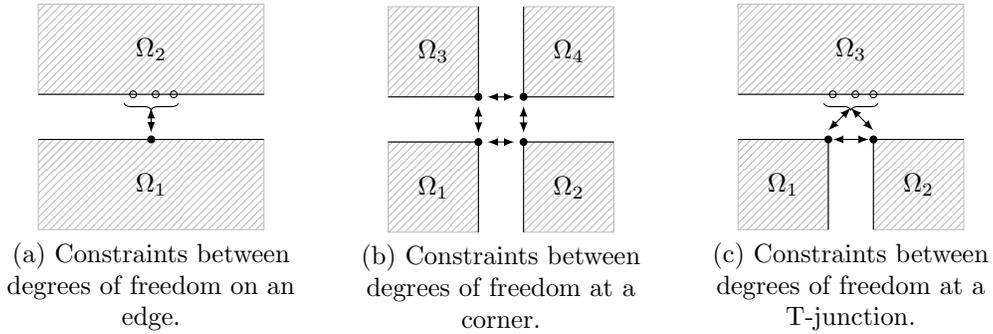
\begin{figure}[H]
    \centering
\begin{minipage}{.3\textwidth}
    \centering
	\begin{tikzpicture}[scale=.6]
        \draw[lightgray, pattern=north east lines, pattern color=lightgray] (-2.5,.5) rectangle (2.5,2.5);
        \draw[lightgray, pattern=north east lines, pattern color=lightgray] (-2.5,-.5) rectangle (2.5,-2.5);

        \draw (-2.5,.5) -- (2.5,.5);
        \draw (-2.5,-.5) -- (2.5,-.5);

        \draw [decorate,
            decoration = {brace}] (.6,0.3) --  (-.6,0.3);

        \draw[latex-latex] (0,.2) -- (0,-.35);

        \draw[black]  (-.4,.5) circle (.5ex);
        \draw[black]  (.1,.5) circle (.5ex);
        \draw[black]  (.5,.5) circle (.5ex);
        \draw[black, fill=black]  (0,-0.5) circle (.5ex);

        \draw (0,-1.5) node {$\Omega_1$};
        \draw (0,1.5) node {$\Omega_2$};
    \end{tikzpicture}

    (a) Constraints between degrees of freedom on an edge.
    \end{minipage}
    \begin{minipage}{.3\textwidth}
    \centering
    \begin{tikzpicture}[scale=.6]
        \draw[lightgray, pattern=north east lines, pattern color=lightgray] (.5,.5) rectangle (2.5,2.5);
        \draw[lightgray, pattern=north east lines, pattern color=lightgray] (-.5,.5) rectangle (-2.5,2.5);
        \draw[lightgray, pattern=north east lines, pattern color=lightgray] (.5,-.5) rectangle (2.5,-2.5);
        \draw[lightgray, pattern=north east lines, pattern color=lightgray] (-.5,-.5) rectangle (-2.5,-2.5);

        \draw (-2.5,.5) -- (-.5,.5) -- (-.5,2.5);
        \draw (-2.5,-.5) -- (-.5,-.5) -- (-.5,-2.5);
        \draw (2.5,.5) -- (.5,.5) -- (.5,2.5);
        \draw (2.5,-.5) -- (.5,-.5) -- (.5,-2.5);

        \draw[latex-latex] (-.3,-.5) -- (.3,-.5);
        \draw[latex-latex] (.3,.5) -- (-.3,.5);
        \draw[latex-latex] (-.5,.3) -- (-.5,-.3);
        \draw[latex-latex] (.5,-.3) -- (.5,.3);

        \draw[black, fill=black]  (.5,.5) circle (.5ex);
        \draw[black, fill=black]  (.5,-.5) circle (.5ex);
        \draw[black, fill=black]  (-.5,.5) circle (.5ex);
        \draw[black, fill=black]  (-.5,-.5) circle (.5ex);

        \draw (-1.5,-1.5) node {$\Omega_1$};
        \draw (1.5,-1.5) node {$\Omega_2$};
        \draw (-1.5,1.5) node {$\Omega_3$};
        \draw (1.5,1.5) node {$\Omega_4$};
    \end{tikzpicture}

    (b) Constraints between degrees of freedom at a corner.
    \end{minipage}
    \begin{minipage}{.3\textwidth}
    \centering
	\begin{tikzpicture}[scale=.6]
        \draw[lightgray, pattern=north east lines, pattern color=lightgray] (-2.5,.5) rectangle (2.5,2.5);
        \draw[lightgray, pattern=north east lines, pattern color=lightgray] (.5,-.5) rectangle (2.5,-2.5);
        \draw[lightgray, pattern=north east lines, pattern color=lightgray] (-.5,-.5) rectangle (-2.5,-2.5);

        \draw (-2.5,.5) -- (2.5,.5);
        \draw (-2.5,-.5) -- (-.5,-.5) -- (-.5,-2.5);
        \draw (2.5,-.5) -- (.5,-.5) -- (.5,-2.5);

        \draw [decorate, decoration = {brace}] (.6,0.3) --  (-.6,0.3);

        \draw[latex-latex] (0,.2) -- (-.5,-.3);
        \draw[latex-latex] (0,.2) -- (.5,-.3);

        \draw[latex-latex] (-.4,-.5) -- (.4,-.5);

        \draw[black]  (-.4,.5) circle (.5ex);
        \draw[black]  (.1,.5) circle (.5ex);
        \draw[black]  (.5,.5) circle (.5ex);
        \draw[black, fill=black]  (.5,-.5) circle (.5ex);
        \draw[black, fill=black]  (-.5,-.5) circle (.5ex);

        \draw (-1.5,-1.5) node {$\Omega_1$};
        \draw (1.5,-1.5) node {$\Omega_2$};
        \draw (0,1.5) node {$\Omega_3$};
    \end{tikzpicture}

    (c) Constraints between degrees of freedom at a T-junction.
    \end{minipage}
    \caption{Schematic representation of constraints}
    \label{fig:dof}
    \end{figure}

 	 Since the matrix $C$ is constructed on an edge-by-edge basis, it is possible that there is more than one positive coefficient in a column of $C$ if the column corresponds to a degree of freedom that is associated to more than one edge, i.e., a degree of freedom associated to the corner of a patch. These cases are depicted in Figure~\ref{fig:dof}~(b) and (c), where $\eta_m$ has to be chosen more carefully in order to guarantee~\eqref{eq:eta:unique}.

    Now, consider the case of regular corners (cf.  Figure~\ref{fig:dof}~(b)).
    
    Since B-splines interpolate at the boundary, the corresponding rows $m$ of the matrix $C$ contain exactly one positive and one negative coefficient, i.e., there is a $p_m$ and a $n_m$ such that $C_{m,n_m}<0$ and $C_{m,p_m}>0$ and $\mathcal{F}_m(C)=\{n_m,p_m\}$. For these constraints, one can choose either
    $\eta_m = n_m$ or $\eta_m = p_m$; in order to guarantee~\eqref{eq:eta:unique}, we choose a cyclic version.

    Now consider the case of T-junctions, where we have two kinds of constraints (cf.  Figure~\ref{fig:dof}~(c)). Two constraints enforce that each of the local corner values agrees with the corresponding function value on the edge, while the remaining constraint guarantee that the two corner values agree. (If more than $3$ patches would meet at a T-junction, there would be accordingly more constraints that guarantee that two corner values agree.) Again, the constraints involving two corner values correspond to a row $m$ of $C$ with exactly one positive and one negative coefficient (where $\eta_m$ could be either of them). For the other two rows we only know that there is exactly one positive coefficient (where $\eta_m$ could only be the positive one).
    
    Here, it is in general not possible to choose $\eta_m$ in a cyclic way as for the regular corners. Since one of the constraints at the T-junction is redundant (this is also valid for a regular corner, but there we did not need this argument), the set $\mathcal M$ does not contain all of these constraints. By removing one of the constraints, one can again choose $\eta_m \in \mathcal F_m(C)$ with \eqref{eq:eta:unique}.
\end{Proof}

\begin{Lemma}\label{lem:sign:change}
    In all steps of the iteration, it is possible to find some index such that \eqref{eq:alg:rowcond} holds.
\end{Lemma}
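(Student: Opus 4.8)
The plan is to prove the statement in the equivalent form: as long as the loop has not terminated, i.e. $CB^{(\nu-1)}\neq 0$, there exists a row index $m$ with $\mathcal F_m(CB^{(\nu-1)})\neq\emptyset$; taking $m_\nu:=m$ and any $n_\nu$ in that (then nonempty) set makes \eqref{eq:alg:rowcond} satisfiable. Note that while we are inside the loop we have $CB^{(\nu-1)}\neq 0$ by the stopping criterion itself, so there is always at least one nonzero row to work with, and it suffices to produce one good row among them.

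First I would record the invariants that the elimination preserves. By \eqref{eq:cbnue} (established in the partition-of-unity lemma) every row of $CB^{(\nu-1)}$ sums to zero, so any nonzero row carries both a strictly positive and a strictly negative entry. Next I would translate ``no valid pivot'' into a sign statement: by the definition of $\mathcal F_m$, a nonzero row $m$ contributes no pivot exactly when it possesses neither a unique positive entry (all others $\le 0$) nor a unique negative entry (all others $\ge 0$). Combined with the vanishing row sum, the assumption that $\mathcal F_m(CB^{(\nu-1)})=\emptyset$ for \emph{every} $m$ would force every nonzero row of $CB^{(\nu-1)}$ to have at least two positive and at least two negative entries. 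The goal is to rule this configuration out.

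The core of the argument is to exploit the combinatorial structure coming from $C$. Here I would use the description from the proof of \thref{lem:eta:exists}: each row of $C$ has exactly one positive coefficient (equal to $1$) with the remaining coefficients non-positive and summing to $-1$, and for any set of at most $\rank C$ rows there is a system of distinct representatives $(\eta_m)$ with $\eta_m\in\mathcal F_m(C)$. Writing $CB^{(\nu-1)}$ through the factor $B^{(\nu-1)}=R^{(1)}\cdots R^{(\nu-1)}$, which is entrywise non-negative (non-negativity lemma) and satisfies $B^{(\nu-1)}\underline e=\underline e$ by \eqref{eq:cbnue}, hence is row-stochastic, each row $m$ of $CB^{(\nu-1)}$ equals the ``master'' row $p_m$ of $B^{(\nu-1)}$ minus a convex combination of its other rows. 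Since row indexing is unaffected by right multiplication, \thref{lem:eta:exists} applies verbatim to the relevant rows (fewer than $\rank C$ columns have been eliminated while the loop runs). I would then use the distinct-representative structure to orient the surviving constraints acyclically and exhibit a ``leaf'' row, i.e. one whose master degree of freedom no longer interacts with the others, which must retain a unique-sign entry and thereby contradict the assumption that every nonzero row carries two entries of each sign.

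The step I expect to be the main obstacle is precisely this last one: the sign pattern of $CB^{(\nu-1)}$ is not transparently inherited from that of $C$, because the update \eqref{eq:Rdef} adds non-negative multiples of the pivot column to the other columns and can, a priori, turn formerly non-positive entries positive. The delicate point is therefore to show that the single-positive-per-row structure of $C$, transported through the system of distinct representatives of \thref{lem:eta:exists}, still guarantees a row with a unique-sign entry after each elimination. Once this sign bookkeeping is controlled, the contradiction closes and a valid pivot exists at every step.
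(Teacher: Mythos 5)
Your framing coincides with the paper's up to the point where the work starts: argue by contradiction, use the zero row sums from \eqref{eq:cbnue} so that a nonzero row of $CB^{(\nu-1)}$ with empty $\mathcal F$-set must carry at least two positive and two negative entries, and bring in \thref{lem:eta:exists}. But the proposal stops exactly where the proof has to happen, and you say so yourself: the ``delicate point'' of transporting the sign structure of $C$ through the eliminations is not a deferred technicality, it is the entire content of the lemma. The paper resolves it by an explicit induction. After relabelling so that $m_1=1,\dots,m_{\nu^*}=\nu^*$, it first shows that the first $\nu^*+1$ rows of $C$ are linearly independent (each eliminated row of $C^{(\nu)}$ vanishes while the next pivot row does not), which is what licenses applying \thref{lem:eta:exists} with $\mathcal M=\{1,\dots,\nu^*+1\}$; your parenthetical about ``fewer than $\rank C$ columns eliminated'' gestures at this hypothesis check ($|\mathcal M|\le \rank C$) but does not supply it. The paper then maintains, as an inductive invariant, a system of \emph{distinct} representatives $\eta_i^{(\nu)}\in\mathcal F_i(C^{(\nu)})$ for all surviving rows, and propagates it through the update \eqref{eq:Rdef} by a two-case sign analysis: if the pivot column $n_\nu$ equals the pivot row's own representative, then for every other row $i$ all entries off $\eta_i^{(\nu-1)}$ remain nonpositive (the correction added is a nonnegative multiple of entries whose sign is controlled precisely because $n_\nu\in\mathcal F_{m_\nu}$ and $\eta_i^{(\nu-1)}\in\mathcal F_i$), and the zero row sum together with nonvanishing of the row forces the representative entry to stay strictly positive; if $n_\nu$ differs from the pivot row's representative, that row has exactly two nonzero, equal-and-opposite entries, so the elimination merely merges column $n_\nu$ into column $\eta_\nu^{(\nu-1)}$, and representatives are reassigned without collision. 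None of this bookkeeping appears in your proposal.

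Your alternative sketch for the missing step --- ``orient the surviving constraints acyclically and exhibit a leaf row'' --- is not developed, and as stated I do not see how it could be: after even one elimination the rows of $CB^{(\nu-1)}$ no longer have the single-positive-entry structure of $C$, since the update \eqref{eq:Rdef} can add positive mass to several columns of a row at once (exactly the danger you flag), so there is no evident graph on the surviving degrees of freedom in which ``leaf'' has the meaning you need, and the row-stochastic factorization of $B^{(\nu-1)}$ that you invoke does not by itself constrain the sign pattern of $CB^{(\nu-1)}$. The paper sidesteps this by using the combinatorics of $C$ only once, at $\nu=0$ where \thref{lem:eta:exists} applies, and then pushing the representative system forward algebraically step by step rather than re-deriving a pivot row from scratch at stage $\nu^*$. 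As it stands, your proposal is a correct framing plus an accurate diagnosis of the missing step --- but the missing step is the proof.
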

\begin{Proof}
	We make a proof by contradiction. Let $C^{(\nu)}:=CB^{(\nu)}$.
	
    Assume that after performing $\nu^*$ steps, we have $C^{(\nu^*)}\ne0$ and there is no row such that  \eqref{eq:alg:rowcond} holds.
    
    We observe that the ordering of the rows of $C$ does not have a direct effect to the algorithm; so we assume that the first $\nu^*$ rows correspond to the rows eliminated in the first $\nu^*$ steps of the algorithm, i.e., $m_1=1,m_2=2,\cdots,m_{\nu^*}=\nu^*$. Since $C^{(\nu^*)}\ne 0$, there is at least one row that does not vanish; without loss of generality, we assume that the row $\nu^*+1$ does not vanish.
    Consequently, we know for all $\nu=0,\ldots,\nu^*$ that
    (a) the first $\nu$ rows of $C^{(\nu)}$ vanish and
    (b) conversely, because of~\eqref{eq:alg:rowcond}, the row $\nu+1$ of $C^{(\nu)}$ does not vanish.
    From the combination of items~(a) and (b), we know that the first $\nu^*+1$ rows of $C$ are linear independent.

    \thref{lem:eta:exists} with $\mathcal M:=\{1,2,\ldots,\nu^*+1\}$ gives the following statement for $\nu=0$:
    \begin{equation}\label{eq:the:property}
    	\mbox{There are }
    	\eta_{\nu+1}^{(\nu)}\ne \ldots\ne \eta_{\nu^*+1}^{(\nu)}
    	\mbox{ such that  }
    		\eta_m^{(\nu)} \in \mathcal F_m(C^{(\nu)})
			\mbox{ for all }
			m=\nu,\ldots,\nu^*+1.
    \end{equation}
    If we know this statement for $\nu=\nu^*$, we know that Algorithm \ref{alg:basis} can proceed after the step $\nu^*$, which is a contradiction to the assumption that this was not the case.

   We show~\eqref{eq:the:property} by induction; we assume that it holds for some $\nu-1$. In the $\nu$-th step of the algorithm, we choose $m_\nu=\nu$ (as assumed above) and some $n_\nu \in \mathcal F_{m_\nu}(C^{(\nu-1)})$.
	Now, we have to consider two cases.

	The \emph{first case} is $n_\nu=\eta_{\nu}^{(\nu-1)}$. (We know from~\eqref{eq:the:property} that $\eta_{\nu}^{(\nu-1)} \in \mathcal F_{m_\nu}(C^{(\nu-1)})$, i.e., that it is a feasible choice.) Now consider any $i\in \{\nu+1,\ldots,\nu^*+1\}$. Without loss of generality, assume that $C^{(\nu-1)}_{i,\eta_i^{(\nu-1)}}>0$ (the case $<0$ is analogous). Using $n_\nu = \eta_{\nu}^{(\nu-1)} \ne \eta_{i}^{(\nu-1)}$, we have
	\[
				C^{(\nu)}_{i,j}
				=
				\underbrace{C^{(\nu-1)}_{i,j}}_{\displaystyle\le 0}
				- \underbrace{\frac{C^{(\nu-1)}_{\nu,j}}{C^{(\nu-1)}_{\nu,n_\nu}}}_{\displaystyle\le 0}
				\underbrace{C^{(\nu-1)}_{i,n_\nu}}_{\displaystyle\le0}
				\le 0
				\mbox{ for all }
				j\not \in \{ n_\nu, \eta_i^{(\nu-1)}\}
				\quad\mbox{and}\quad
				C^{(\nu)}_{i,n_\nu} = 0.
		\]
		This means that $j=\eta_i^{(\nu-1)}$ is the only candidate to obtain $C^{(\nu)}_{i,j}>0$. Since the $i$-th row does not vanish and the row-sum is zero, we know that $C^{(\nu)}_{i,\eta_i^{(\nu-1)}}>0$ and thus
		$\eta_i^{(\nu)} :=\eta_i^{(\nu-1)} \in \mathcal F_{\nu}(C^{(\nu-1)})$. Since this holds for all $i$, we know~\eqref{eq:the:property} for $\nu$ in the first case.

		Now, consider the \emph{second case}, i.e., $n_\nu \ne \eta_{\nu-1}^{(\nu-1)}$. Since the set $\mathcal F_\nu$ can have at most two members and using~\eqref{eq:the:property} and~\eqref{eq:alg:rowcond}, we know that
		$\mathcal F_{\nu}(C^{(\nu-1)} )=\{ n_\nu , \eta^{(\nu-1)}_{\nu} \}$. In this case,
		$C_{\nu,j}^{(\nu-1)}=0$ for $j\not\in \{n_\nu , \eta^{(\nu-1)}_{\nu} \}$. Since the row sum of $C^{(\nu-1)}$ is known to be zero, we have
		$C^{(\nu-1)}_{\nu,n_\nu} = - C^{(\nu-1)}_{\nu,\eta_{\nu}^{(\nu-1)}}$. Thus, we have
		\[
				C^{(\nu)}_{i,j} =
				\begin{cases}
				0 & \mbox{if } j = n_\nu \\
				C^{(\nu-1)}_{i,\eta_{\nu}^{(\nu-1)}} + C^{(\nu-1)}_{i,n_\nu} & \mbox{if } j = \eta_{\nu}^{(\nu-1)} \\
				C^{(\nu-1)}_{i,j} & \mbox{otherwise.} \\
				\end{cases}
		\]
		So, if $\eta_i^{(\nu-1)}\ne n_\nu$, then $\eta_i^{(\nu)}:=\eta_j^{(\nu-1)}\in\mathcal F_i(C^{(\nu)})$ with the same arguments as above. Conversely, if $\eta_i^{(\nu-1)}= n_\nu$, then $\eta_i^{(\nu)}:=\eta_{\nu}^{(\nu-1)}\in \mathcal F_i(C^{(\nu)})$. Since this holds for all $i$, we know~\eqref{eq:the:property} for $\nu$ also in the second case.

		By induction, we obtain \eqref{eq:the:property} for $\nu^*+1$. This is in contradiction to the assumption that the algorithm cannot proceed after step $\nu^*$. This finishes the proof.
\end{Proof}

\section{Approximation error estimates}\label{sec:5}

In this section, we investigate the approximation power of the multi-patch spline space $V_h$, as introduced in~\eqref{globalsp}.
Specifically, give an approximation error estimate in terms of the local mesh sizes, while also considering the patch-wise prescribed regularity of the solution. We assume $u \in H^{1+q_k}(\Omega_k)$, where $q_k \in (0,p]$ is an appropriate regularity parameter. If $q_k$ is an integer, the associated norm $\|u\|_{H^{1+q_k}(\Omega_k)}$ is the standard norm based on the $L^2$-norms of all partial derivatives of total order $\le 1+q_k$. If $q_k$ is not an integer, we assume $\|u\|_{H^{1+q_k}(\Omega_k)}$ to be as defined by real Hilbert space interpolation of  $\|u\|_{H^{1+\lfloor q_k\rfloor }(\Omega_k)}$  and $\|u\|_{H^{1+\lceil q_k\rceil }(\Omega_k)}$ via the K-method, see \cite{Adams2003}. Certainly, other ways of defining the norm lead to the same space, however only with \emph{equivalent} norms. For simplicity, we collect the local regularities to a vector $q=(q_1,\ldots,q_K)$ and write $ \mathcal H^{1+q}(\Omega) :=\{ u\in H^1(\Omega): u|_{\Omega_k} \in H^{1+q_k}(\Omega_k) \}$. Additionally we assume uniform grids for all patches.
\begin{Assumption}\label{ass:uniform}
    For each patch $\Omega_k$, there is a specific grid size $\wh h_k$ such that the corresponding knot vectors are uniform, i.e., of the form $\Xi^{(k,1)} = \Xi^{(k,2)} = (0,\cdots,0, \wh h_k, 2\wh h_k, \cdots, 1-\wh h_k, 1,\cdots,1)$.
\end{Assumption}

Within this section, we give a proof for the following two main theorems.
\begin{Theorem}\label{thrm:main-ho}
	Assume that the Assumptions~\ref{ass:map}, \ref{ass:matching}, \ref{ass:nested}, \ref{ass:compatibility}, \ref{ass:simple:t} and \ref{ass:uniform} hold. Then, there is a constant $c>0$ that only depends on the constant $C_G$ from \thref{ass:map} such that for any $q=(q_1,\ldots,q_K)$ with $q_k \in [1, p]$, the following approximation error estimate holds:
   	\[
			\inf_{u_h \in V_h}  \|u-u_h\|_{H^1(\Omega)}^2
			\le c
			\sum_{k=1}^K h_k^{2q_k} \|u\|_{H^{1+q_k}(\Omega_k)}^2
			\quad\mbox{for all}\quad
			u \in \mathcal H^{1+q}(\Omega).
	\]
\end{Theorem}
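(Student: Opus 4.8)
The plan is to construct the approximating function $u_h \in V_h$ patch-by-patch, building first a local quasi-interpolant on each patch and then correcting it so that the traces match across shared edges, so that the corrected function is globally continuous and hence lies in $V_h$. The starting point is the standard single-patch spline approximation theory for tensor-product B-splines under the K-method definition of fractional Sobolev norms: on each patch $\Omega_k$ there is a quasi-interpolant $\Pi_k u \in V_h^{(k)}$ satisfying a local estimate of the form $\|u - \Pi_k u\|_{H^1(\Omega_k)} \le c\, h_k^{q_k}\|u\|_{H^{1+q_k}(\Omega_k)}$, obtained by pulling back to $\widehat\Omega$, using \thref{ass:map} together with \thref{ass:uniform} to transfer the reference-domain estimate (with its $H_k$-scaling) to the physical patch, and invoking Hilbert-space interpolation to cover non-integer $q_k \in [1,p]$. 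The collection $(\Pi_k u)_k$ is patch-wise accurate but generally discontinuous across interfaces, since the independent quasi-interpolants need not agree on $\Gamma_{k,\ell}$.

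The central construction is therefore the gluing step. On each shared edge $\Gamma_{k,\ell}$, \thref{ass:nested} guarantees that the two trace spaces are nested, say $V_h^{(k)}|_{\Gamma_{k,\ell}} \subseteq V_h^{(\ell)}|_{\Gamma_{k,\ell}}$. I would take the trace coming from the patch with the \emph{finer} edge space as the authoritative one, and modify the coarser-side function so that its trace equals the prescribed one. Concretely, on the coarse side I replace the boundary layer of B-spline coefficients (those whose basis functions do not vanish on the edge) by the coefficients dictated by the matching trace, using the embedding coefficients $E_{i,j}^{(k,\ell)}$ from~\eqref{eq:ecsetup0}. The key is to bound the $H^1(\Omega_k)$-norm of this correction. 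One writes the correction as a discrete function supported in a strip of width $O(h_k)$ along the edge, estimates its $L^2$ and $H^1$ norms via inverse inequalities (a factor $h_k^{-1}$) in terms of the $L^2$-norm of the trace discrepancy $(\Pi_k u - \Pi_\ell u)|_{\Gamma_{k,\ell}}$, and then controls this trace discrepancy by a trace theorem and the triangle inequality $\|(\Pi_k u - \Pi_\ell u)|_{\Gamma}\|_{L^2} \le \|(u-\Pi_k u)|_{\Gamma}\|_{L^2} + \|(u-\Pi_\ell u)|_{\Gamma}\|_{L^2}$, each term of which is of order $h^{q}$ by the local approximation estimate plus a scaled trace inequality. \thref{ass:compatibility} is exactly what keeps the inverse inequality constants uniform, since it prevents the edge from being short relative to the grid size.

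The consistency bookkeeping around corners and T-junctions is where I expect the main obstacle. A single interface correction fixes one edge, but a degree of freedom near a corner or a T-junction is shared by several edges, so correcting one edge can spoil the trace on an adjacent edge. \thref{ass:matching} and \thref{ass:simple:t} are designed to tame this: the latter ensures T-junctions are interior and that patches meeting at a T-junction share a full edge, limiting the local configurations that can arise. The clean way to handle it is to process corrections in an order compatible with the nesting (always modifying the coarser side, which by \thref{ass:nested} can exactly represent the finer trace), so that the corrections on distinct edges are compatible at the shared corner and do not conflict; one must verify that the resulting boundary-strip corrections at a corner overlap in a way whose combined $H^1$-norm is still controlled by the same $O(h^q)$ bound, absorbing only a constant depending on the fixed number of patches meeting at a vertex. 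Once the glued function $\tilde u_h$ is shown to be continuous and hence in $V_h$, the final estimate follows by the triangle inequality: $\|u - \tilde u_h\|_{H^1(\Omega)}^2$ is bounded by the sum of the local quasi-interpolation errors and the sum of the correction norms, both of which are $\sum_k c\, h_k^{2q_k}\|u\|_{H^{1+q_k}(\Omega_k)}^2$, and taking the infimum over $V_h$ yields the claim with a constant depending only on $C_G$.
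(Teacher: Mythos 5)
There is a genuine gap, and it sits at the heart of your construction: the gluing direction is backwards. With the nesting $V_h^{(k)}|_{\Gamma_{k,\ell}} \subseteq V_h^{(\ell)}|_{\Gamma_{k,\ell}}$ you declare the \emph{finer} trace authoritative and propose to modify the coarser side so that its trace equals it. But the coarse trace space does not contain the fine one, so no choice of boundary-layer coefficients on the coarse side can reproduce an arbitrary element of $V_h^{(\ell)}|_{\Gamma_{k,\ell}}$; the embedding coefficients $E_{i,j}^{(k,\ell)}$ of \eqref{eq:ecsetup0} express coarse basis functions in the fine basis, not conversely. The paper does exactly the opposite: on each edge $\Gamma_i$, the patch $\kk(\Gamma_i)$ with the \emph{smaller} trace space is authoritative, and the correction $E_{\Gamma_i}^{(k)}\bigl(u_{1,h}^{(\kk(\Gamma_i))}-u_{1,h}^{(k)}\bigr)$ in \eqref{eq:u2h-ho-def} is added on the finer side; the jump restricted to $\Gamma_i$ lies in $V_h^{(k)}|_{\Gamma_i}$ precisely because of \thref{ass:nested} and \eqref{eq:edge:ext:embedded}, so the corrected function remains in $V_h^{(k)}$. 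This one-directional feasibility is the entire point of the nestedness assumption, and your construction inverts it; as written, the corrected coarse-side function either leaves $V_h^{(k)}$ or fails to match the trace, so the glued function is not in $V_h$.

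Even with the direction repaired, your quantitative mechanism would not deliver the theorem as stated, whose constant $c$ depends only on $C_G$ and is in particular independent of $p$ (see the Remark following \thref{thrm:main-lo}). A correction supported in a strip of width $O(h_k)$ and estimated via spline inverse inequalities picks up $p$-dependent factors (the $H^1$--$L^2$ inverse estimate for degree-$p$ splines costs roughly $p^2/h$), and tying the strip width to the \emph{fine} grid size $h_k$ while the trace discrepancy is governed by the coarse-patch error introduces a factor of the grid disparity $h_{\kk(\Gamma_i)}/h_k$, which is unbounded under repeated one-sided refinement. The paper avoids both problems with explicit extensions built from $\psi_\eta(t)=(\max\{1-t/\eta,0\})^p$ of depth $\eta \eqsim p\, h_{\kk(\Gamma_i)} H_k^{-1}$, i.e.\ scaled to the \emph{coarse} grid size and to $p$, whose norms $\|\psi_\eta\|_{L^2(0,1)}^2 \eqsim p^{-1}\eta$ and $|\psi_\eta|_{H^1(0,1)}^2 \eqsim p\,\eta^{-1}$ (\thref{lem:phinorm}) make the $p$-factors cancel. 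Finally, your ordering-based bookkeeping at corners is replaced in the paper by structural properties rather than sequencing: the projector of \thref{lem:t18} interpolates at patch corners (\eqref{eq:t18:corner}), so only T-junctions need vertex corrections, which are applied \emph{before} the edge corrections using extensions that agree on the shared edge of the two finer patches and vanish at all other vertices and edges (Lemmas~\ref{lem:e:x:no:interference} and~\ref{lem:e:gamma:no:interference}); moreover, bounding the vertex mismatch at a T-junction requires the mixed second-derivative estimate \eqref{eq:QI_err:h2} through \thref{lem:trace:vtx}, a piece of $H^2$-level information that your trace-theorem-plus-triangle-inequality sketch does not supply.
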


In case of reduced regularity, the estimate weakens slightly and we obtain the following result.
\begin{Theorem}\label{thrm:main-lo}
	Assume that the Assumptions~\ref{ass:map}, \ref{ass:matching}, \ref{ass:nested}, \ref{ass:compatibility}, \ref{ass:simple:t} and \ref{ass:uniform} hold. Then, for any $q=(q_1,\ldots,q_K)$ with $q_k \in (0, p]$ and any $\varepsilon \in (0,\min_kq_k)$, the following approximation error estimate holds:
   	\[
			\inf_{u_h \in V_h} \|u-u_h\|_{H^1(\Omega)}^2
			\le c_\varepsilon 
            p \max_\ell \left(1+\log \tfrac{H_\ell}{h_\ell} + \log p\right) |\VV(\Omega_\ell)|
            \sum_{k=1}^K h_k^{2(q_k-\varepsilon)} \|u\|_{H^{1+q_k}(\Omega_k)}^2,
    \]
    where $c_\varepsilon$ is a constant that only depends on $\varepsilon$ and the constant $C_G$ from \thref{ass:map} and $|\VV(\Omega_k)|$ is the number of vertices (T-junctions and corners) located on the boundary of the patch $\Omega_k$.
\end{Theorem}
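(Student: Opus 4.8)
The plan is to bound the infimum by constructing one explicit conforming approximant $u_h \in V_h$ and estimating $\|u - u_h\|_{H^1(\Omega)}$ directly. First I would build, on each patch $\Omega_k$ separately, a tensor-product B-spline quasi-interpolant $\Pi_k u \in V_h^{(k)}$. Pulling back to $\wh\Omega$ via $G_k$ and using \thref{ass:map} to trade physical for parametric derivatives, together with the uniform-grid \thref{ass:uniform}, standard univariate spline approximation theory yields the local estimate $\|u - \Pi_k u\|_{H^1(\Omega_k)} \le c\, h_k^{q_k}\|u\|_{H^{1+q_k}(\Omega_k)}$; for non-integer $q_k$ this follows by real Hilbert-space interpolation between the integer orders, exactly as the norm $\|\cdot\|_{H^{1+q_k}(\Omega_k)}$ was defined. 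This step is essentially the content already underlying \thref{thrm:main-ho} and is not where the difficulty lies.

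The family $(\Pi_k u)_k$ is in general \emph{not} continuous across the interfaces, so it does not lie in $V_h$. The core of the argument is to correct it into a conforming function at controlled cost. Using the nestedness \thref{ass:nested}, on each shared edge $\Gamma_{k,\ell}$ I would fix the common trace to be the one lying in the smaller (coarser) of the two trace spaces and add a correction $w_k \in V_h^{(k)}$ so that $u_h|_{\Omega_k} := \Pi_k u + w_k$ attains this trace. I would split $w_k$ into an \emph{edge} part, whose boundary data is the interface jump $(\Pi_k u - \Pi_\ell u)|_{\Gamma_{k,\ell}}$ extended discretely into the patch, and a \emph{vertex} part that repairs the resulting inconsistencies at corners and T-junctions. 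Here \thref{ass:matching}, \thref{ass:simple:t} and \thref{ass:compatibility} are needed to guarantee that the interface combinatorics is benign: each vertex is shared by only a few patches, and edges are not short relative to the local grid size.

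The edge corrections are harmless: bounding a discrete extension in $H^1(\Omega_k)$ by the $H^{1/2}(\Gamma_{k,\ell})$-norm of its boundary data, and then estimating the jump through the trace theorem and the local estimate of the first step, contributes only the clean sum $\sum_k h_k^{2q_k}\|u\|_{H^{1+q_k}(\Omega_k)}^2$. The logarithmic and degree-dependent prefactor originates entirely from the \emph{vertex} corrections: a function in $V_h^{(k)}$ carrying a prescribed value at a single vertex while being otherwise minimal has $H^1$-energy of order $1 + \log(H_k/h_k) + \log p$, the classical energy of a discrete-harmonic vertex extension, with the $p$-dependence traced to inverse inequalities and the overlap of the B-spline basis. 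Summing one such contribution per vertex produces the factor $p\,\max_\ell\bigl(1+\log\tfrac{H_\ell}{h_\ell}+\log p\bigr)\,|\VV(\Omega_\ell)|$.

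Finally, because the trace estimate used for the edges loses stability as $q_k$ drops toward $0$ (near a corner the jump must be measured against an $H^{1/2}_{00}$-type norm, which is no longer controlled by the plain $H^1$ approximation error at low regularity), I would absorb this defect by sacrificing an arbitrarily small $\varepsilon$ in the regularity exponent: interpolating the approximation estimate between order $q_k-\varepsilon$ and a slightly higher order restores stability at the price of $h_k^{q_k-\varepsilon}$ and a constant $c_\varepsilon$ blowing up as $\varepsilon\to 0$. Assembling the pieces by the triangle inequality, $\|u-u_h\|_{H^1(\Omega)}^2 \le 2\sum_k\|u-\Pi_k u\|_{H^1(\Omega_k)}^2 + 2\sum_k\|w_k\|_{H^1(\Omega_k)}^2$, and collecting the prefactor as a maximum over patches yields the claim. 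I expect the sharp energy estimate for the vertex corrections — obtaining precisely the $\log(H_k/h_k)+\log p$ dependence and no worse — to be the main technical obstacle, with the $\varepsilon$-regularization for small $q_k$ the second delicate ingredient.
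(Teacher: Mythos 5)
Your overall architecture (patch-local projection, vertex corrections, edge corrections, an $\varepsilon$-sacrifice in the exponent) coincides with the paper's, but your accounting of where the factor $p\,(1+\log\tfrac{H_\ell}{h_\ell}+\log p)$ comes from is inverted, and this breaks the argument. Your claim that a discrete function carrying a prescribed value at a single vertex, otherwise minimal, has $H^1$-energy of order $1+\log(H_k/h_k)+\log p$ is false: the paper's vertex extensions $E_{\x_m,lr}^{(k)}$ are tensor-product bubbles $\psi_{p\wh h_k}(\wh x)\,\psi_{p\wh h_k}(\wh y)\,s$, and \thref{lem:phinorm} shows their $H^1(\Omega_k)$-energy is $\lesssim |s|^2$ with no logarithm at all (in fact the minimal-energy vertex extension in 2D has energy of order $(1+\log(H/h))^{-1}$, i.e., \emph{small}). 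Overestimating a correction would not by itself invalidate an upper bound, but it leads you to declare the edge corrections ``harmless,'' and that is where the estimate genuinely fails: after the vertex values are fixed, the jump on an edge $\Gamma_i$ must be extended into the patch with zero data on the remaining three sides, which requires an $H^{1/2}_{00}$-type bound rather than the plain $H^{1/2}(\Gamma_i)$-bound you invoke. The paper pays exactly at this point, through the discrete harmonic extension estimate of \cite{Schneckenleitner},
\[
		|\wh E_{\Gamma_i,lr}^{(k)} \wh u|_{H^1(\wh \Omega)}
		\lesssim
		p\, |\wh u|_{H^{1/2}(\wh \Gamma_i^{(k)})}
		+ p\, \bigl(1+\log \wh h_k^{-1} + \log p\bigr) \|\wh u\|_{C(\wh \Gamma_i^{(k)})},
\]
and this sup-norm/logarithm term is the sole source of the theorem's logarithmic factor; your plain-$H^{1/2}$ edge bound is not valid without it.

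Two further gaps. First, you assert that the patch-local approximation step ``is essentially the content already underlying \thref{thrm:main-ho},'' but for $q_k<1$ the tensor-product projector $\Pi_h^{(k)}$ of the high-regularity case is unavailable (it requires $H^2(\wh\Omega)$); the paper must switch to the $H^1(\wh\Omega)$-orthogonal projector $\pi_h^{(k)}$, which no longer interpolates at corners. This is why corrections at \emph{all} vertices are needed (whence the factor $|\VV(\Omega_\ell)|$), and why vertex values and the sup-norm of the jump must be controlled through the Sobolev embedding $H^{1+\varepsilon}(\wh\Omega)\hookrightarrow C(\wh\Omega)$ combined with Hilbert-space interpolation of the estimates from \cite{Sande2022}; that embedding is the actual origin of the loss $h_k^{q_k-\varepsilon}$ and of the constant $c_\varepsilon$ — not an instability of the edge trace estimate near corners, as you suggest. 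Second, your proposed repair of the $H^{1/2}_{00}$ gap by interpolating to order $q_k-\varepsilon$ is only a one-line sketch; it could conceivably be developed (an $H^{1/2+\varepsilon}$ bound of data vanishing at the endpoints does control the $H^{1/2}_{00}$-norm with a constant blowing up as $\varepsilon\to 0$), but then the logarithm in the theorem would have to be produced elsewhere than by your incorrect vertex-energy claim, and you would still need a $p$-explicit bound for a discrete extension operator, which is precisely what \cite{Schneckenleitner} supplies in the log-augmented form quoted above. As written, the proposal neither reproduces the paper's mechanism nor closes a self-contained alternative.
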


Throughout this section, we write $a\lesssim b$ if there is a constant $c>0$ that only depends on the constant $C_G$ from \thref{ass:map} such that $a\le c\, b$. If $a\lesssim b$ and $b\lesssim a$, we write $a\eqsim b$.

We derive the estimate from Theorems~\ref{thrm:main-ho} and \ref{thrm:main-lo} constructively in three steps. First, we consider local projections on each of the patches and present error estimates for those. These local projections can be interpreted as a global approximation by a function that is typically discontinuous between patches. Then, we employ corrections guaranteeing continuity; this is done in two steps: first for each of the vertices, then for the edges between any two patches.

\begin{Remark}
We have introduced \thref{ass:uniform} such that we can write down the mentioned correction terms more easily. An extension to the case of non-uniform grids is, although very technical, a straight-forward extension. Since we use results from~\cite{Schneckenleitner} to prove that theorem, the estimate from \thref{thrm:main-lo} would then depend on the quasi-uniformity of the grid within each patch, i.e., on $\max_k \wh h_k/\wh h_{k,min}$.
\end{Remark}

\begin{Remark}
Note that the constants $c$ and $c_\varepsilon$ are independent of the local grid sizes $h_k$, the diameters of the patches $H_k$, the spline degree $p$, and the smoothness of the B-splines.
\end{Remark}

\subsection{Geometric setup}

In order to derive the theory, we need some more notation. We already know that the overall computational domain $\Omega$ is composed of $K$ disjointed patches $\Omega_k$, each being the image of a parameterization, i.e., $\Omega_k=G_k(\wh\Omega)$, where $\wh\Omega=(0,1)^2$. The interfaces between these patches are edges and vertices. Each patch $\Omega_k$ has four corners $\{ G_k( \wh \x ) : \wh \x \in \{0,1\}^2 \}$. We call the  points in the interior of $\Omega$ that are the corner of a patch \emph{vertex} and enumerate all vertices, i.e., we have
\[
	\{ \x_1, \ldots, \x_M \} =
	\{ G_k( \widehat{\x}^{(k)}_m ) : \widehat{\x}^{(k)}_m \in \{0,1\}^2,
         \; G_k( \widehat{\x}^{(k)}_m ) \not\in\partial\Omega \}
    \quad \mbox{with $\x_m\ne \x_n$ for $m\ne n$.}
\]
It can happen that a vertex is located on the edge of an adjacent patch, rather than on one of its corners; we call these vertices \emph{T-junctions}. Vertices that are the corner of all adjacent patches are called \emph{corner vertices}; in the example of Figure~\ref{fig:geo-setup}, $\x_1$ is a T-junction and all other vertices are a corner vertices. Analogously, we consider the edges between patches. Let
\[
	\{ \Gamma_1, \ldots, \Gamma_{J} \} 
    =
    \{ \partial \Omega_k \cap \partial\Omega_\ell
        : (k,\ell) \mbox{ arbitrarily with } |\partial \Omega_k \cap \partial\Omega_\ell| > 0 \}
    \quad \mbox{with $\Gamma_i\ne \Gamma_j$ for $i\ne j$.}
\]
be the set of distinct edges. Note that the definition does not include segments on the boundary. An edge is the segment of an interface between two patches that is enclosed between two vertices.

\begin{figure}[ht]
    \centering
    \begin{tikzpicture}
    \node[anchor=south west,inner sep=0] (image) at (0,0) {\includegraphics[width=.45\textwidth]{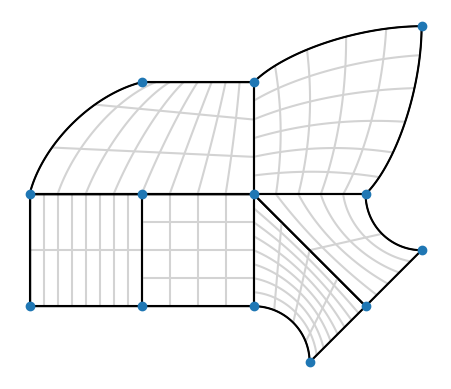}};
    \node at  (2.6,3.75) {$\Omega_1$};
    \node at  (1.4,2.1) {$\Omega_2$};
    \node at  (3.05,2.1) {$\Omega_3$};
    \node at  (4.55,1.45) {$\Omega_4$};
    \node at  (5.5,2.3) {$\Omega_5$};
    \node at  (4.95,4) {$\Omega_6$};

    \node at  (1.4,2.75) {$\Gamma_1$};
    \node at  (3.05,2.75) {$\Gamma_2$};
    \node at  (2.35,2.1) {$\Gamma_3$};
    \node at  (4.05,2.1) {$\Gamma_4$};
    \node at  (5,2.1) {$\Gamma_5$};
    \node at  (4.8,3.1) {$\Gamma_6$};
    \node at  (4.05, 3.75) {$\Gamma_7$};

    \node at  (2.25,3.1) {$\x_1$};
    \node at  (3.65,3.1) {$\x_2$};
    
    \end{tikzpicture}
    \caption{Example with $5$ patches $\Omega_k$, $7$ edges $\Gamma_i$,
    T-junction $\x_1$ and corner vertex $\x_2$.}
    \label{fig:geo-setup}
\end{figure}

We denote by $\CC(\Omega_k)$ the indices of the vertices $\x_m$ that are located on the corners of $\Omega_k$ and by $\TT(\Omega_k)$ the indices of the vertices that are located on the remainder of $\partial\Omega_k$.
$\VV(\Omega_k):=\TT(\Omega_k) \cup \CC(\Omega_k)$ and $\EE(\Omega_k)$ denote the indices of the vertices and edges, respectively, that are located on $\partial\Omega_k$.
Analogously, $\VV(\Gamma_i)$ refers to the $2$ vertices that enclose the edge $\Gamma_i$. 
Conversely, we denote by $\PP(\x_m):=\{k:m\in \VV(\Omega_k)\}$, $\PP(\Gamma_i):=\{k:i\in \EE(\Omega_k)\}$, $\EE(\x_m):=\{i:m\in \VV(\Gamma_i)\}$ the patches and edges that are adjacent to the vertex $\x_m$ or the edge $\Gamma_i$.

Following this pattern, $\VV:=\bigcup_{k=1}^K \VV(\Omega_k)$,
$\TT:=\bigcup_{k=1}^K \TT(\Omega_k)$,
$\CC:=\VV\setminus \TT$ and
$\EE:=\bigcup_{k=1}^K \EE(\Omega_k)$
refer to all vertices, all T-junctions, all corner vertices and all edges, respectively. In the example of Figure~\ref{fig:geo-setup}, we have $\x_1\in \TT$ since it is a T-junction and $\x_1 \in \TT(\Omega_1)$ since $\x_1$ is not located on a corner of $\Omega_1$, but $\x_1 \in \CC(\Omega_2)$ and $\x_1 \not\in \TT(\Omega_2)$ since $\x_1$ is located on a corner of $\Omega_2$.

\subsection{Patch-local quasi-interpolation}

In this section, we want to recall some standard quasi-interpolation error estimates, which we use for a patch-local construction of a quasi-interpolation operator.

The error estimates are based on the $H^1$-orthogonal projector for functions in one dimension. The projector $\wh \Pi_h^{(k,\delta)}: H^1(0,1) \to \wh V_h^{(k,\delta)}$ is defined via the orthogonality property
$(\wh u-\wh \Pi_h^{(k,\delta)} \wh u, \wh v_h)_{H^1_D(0,1)}=0$ for all $\wh v_h \in \wh V_h^{(k,\delta)}$, where
\begin{equation*}
    (\wh u,\wh v)_{H^1_D(0,1)}
    :=
    \int_0^1 \wh u(x) \wh v(x) \, \mathrm dx + \wh u(0) \wh v(0).
\end{equation*}
Besides being $H^1$-orthogonal, the constant values are chosen such that $\wh \Pi_h^{(k,\delta)}\wh u(0) =  \wh u(0)$. In~\cite{Takacs2018}, it was shown that also $\wh \Pi_h^{(k,\delta)}\wh u(1) =  \wh u(1)$ holds. \cite{Sande2022}~gives the following estimates:
\begin{equation}\label{eq:t18:1d}
    \| \wh u -  \wh \Pi_h^{(k,\delta)} \wh u \|_{L^2(0,1)}
    \le
    \left(\tfrac{\wh h_k}{\pi}\right)^{1+q_k}
    | \wh u |_{H^{1+q_k}(0,1)}
    \quad\mbox{and}\quad
    | \wh u -  \wh \Pi_h^{(k,\delta)} \wh u |_{H^1(0,1)}
    \le
    \left(\tfrac{\wh h_k}{\pi}\right)^{q_k}
    | \wh u |_{H^{1+q_k}(0,1)}
\end{equation}
for $q_k \in \{0,\cdots,p\}$.

There are several possibilities to extend this to functions defined on $\wh \Omega = (0,1)^2$. If $\wh u \in H^2(\wh\Omega)$, the tensor product of the projectors $\wh \Pi_h^{(k,1)}$ and $\wh \Pi_h^{(k,2)}$ is well-defined and maps into $\wh V_h$, see \cite{Takacs2018} for details. The following lemma collects the corresponding results.
\begin{Lemma}\label{lem:t18}
    There is a projector $\wh \Pi_h^{(k)} : H^2(\wh \Omega) \to \wh V_h^{(k)}$ that satisfies the following statements:
    \begin{itemize}
        \item $\wh\Pi_h^{(k)}$ interpolates at the corners of $\wh \Omega$, i.e.,
        \begin{equation}\label{eq:t18:corner}
				(\wh\Pi_h^{(k)} \wh u)(\wh\x) = \wh u(\wh\x)
                \quad \mbox{for all} \quad \wh\x \in \{0,1\}^2
                \mbox{ and }\wh u \in H^2(\wh \Omega).
		\end{equation}
        \item Its restriction to an edge is equal to a projection on that edge; particularly, 
        \begin{equation}\label{eq:t18:edge}
            | \wh u - \wh \Pi_h^{(k)} \wh u |_{H^1(\wh \Gamma)}
            =
            \inf_{\wh v_h \in \wh V_h^{(k)}}
            | \wh u - \wh v_h |_{H^1(\wh \Gamma)},
        \end{equation}
        and satisfies the approximation error estimate
        \begin{equation}
            \label{eq:t18:approx-edge}
            \| \wh u - \wh \Pi_h^{(k)} \wh u \|_{L^2(\wh \Gamma)}
            \lesssim 
            h_k
            | \wh u - \wh \Pi_h^{(k)} \wh u |_{H^1(\wh \Gamma)}
        \end{equation}
		for all sides $\wh \Gamma \in 
            \{  \{0\}\times(0,1), \{1\}\times(0,1), 
                (0,1)\times\{0\}, (0,1)\times\{1\} \}$
        and all $\wh u \in H^2(\wh \Omega)$.
		\item It minimizes the error in the $H^1$-seminorm
        \begin{equation}\nonumber
            | \wh u - \Pi_h^{(k)} \wh u |_{H^1(\wh \Omega)}
            =
            \inf_{\wh v_h \in \wh V_h^{(k)}}
            | \wh u - \wh v_h |_{H^1(\Omega)}
		\end{equation}
        and satisfies the approximation error estimates
        \begin{align}
                \label{eq:QI_err:param:l2}
        		\| \wh u - \wh \Pi_h^{(k)} \wh u \|_{L^2(\wh\Omega)}
        		&\lesssim \wh h_k^{1+q_k}
        		| \wh u |_{H^{1+q_k}(\wh \Omega)},
                \\
                \label{eq:QI_err:param:h1}
        		| \wh u - \wh \Pi_h^{(k)} \wh u |_{H^1(\wh\Omega)}
        		&\lesssim \wh h_k^{q_k}
        		| \wh u |_{H^{1+q_k}(\wh \Omega)},
                \\
                \label{eq:QI_err:param:h2}
        		\| \partial_{\wh x,\wh y}( \wh u - \wh \Pi_h^{(k)} \wh u) \|_{L^2(\wh\Omega)}
        		&\lesssim \wh h_k^{-1+q_k}
        		| \wh u |_{H^{1+q_k}(\wh \Omega)}
        \end{align}
        for all $\wh u \in H^{1+q_k}(\wh \Omega)$.
\end{itemize}
\end{Lemma}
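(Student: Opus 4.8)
The plan is to define the projector as the tensor product $\wh\Pi_h^{(k)} := \wh\Pi_h^{(k,1)} \otimes \wh\Pi_h^{(k,2)}$ of the two univariate $H^1_D$-orthogonal projectors and to reduce every claimed property to the one-dimensional facts already recorded: that each $\wh\Pi_h^{(k,\delta)}$ reproduces the endpoint values (at $0$ by the point-evaluation term in $(\cdot,\cdot)_{H^1_D(0,1)}$ and at $1$ by~\cite{Takacs2018}), that the tensor product is well defined on $H^2(\wh\Omega)$ and maps into $\wh V_h^{(k)}$ (again~\cite{Takacs2018}, where the $H^2$ regularity is exactly what guarantees that the relevant traces of $\wh u$ and of its first partial derivatives lie in $H^1$ of the parameter lines so that the univariate projectors may act), together with the univariate estimates~\eqref{eq:t18:1d} from~\cite{Sande2022}. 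Since $\wh\Pi_h^{(k,1)}$ and $\wh\Pi_h^{(k,2)}$ act on different variables they commute, so I may apply them in whichever order is convenient.

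For the corner interpolation~\eqref{eq:t18:corner} I would simply evaluate: applying the endpoint reproduction of $\wh\Pi_h^{(k,1)}$ in the first variable and of $\wh\Pi_h^{(k,2)}$ in the second shows $(\wh\Pi_h^{(k)}\wh u)(\wh\x)=\wh u(\wh\x)$ at each of the four corners $\wh\x\in\{0,1\}^2$. The key structural identity for the edges is that restricting the bivariate projection to a side reduces to a univariate projection of the trace: on $\{0\}\times(0,1)$, projecting first in the first variable and using that $\wh\Pi_h^{(k,1)}$ reproduces the value at $x=0$ gives $(\wh\Pi_h^{(k)}\wh u)(0,\cdot)=\wh\Pi_h^{(k,2)}\bigl(\wh u(0,\cdot)\bigr)$, and analogously for the other three sides. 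From this identity, \eqref{eq:t18:edge} follows because the univariate projector fixes the error derivative to be $L^2$-orthogonal to the derivative spline space while constants may be added freely on the edge without changing the seminorm, so the error is seminorm-minimal over the whole edge space; and \eqref{eq:t18:approx-edge} follows from a standard duality (Aubin--Nitsche) argument for the univariate $H^1_D$-orthogonal projector, equivalently by combining the two bounds in~\eqref{eq:t18:1d} with $q_k=1$, which produces the factor $\wh h_k$ and hence $h_k$ after accounting for the geometry scaling.

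For the interior estimates~\eqref{eq:QI_err:param:l2}--\eqref{eq:QI_err:param:h2} I would use the usual tensor telescoping, writing $\wh u-\wh\Pi_h^{(k)}\wh u=(\wh u-\wh\Pi_h^{(k,1)}\wh u)+\wh\Pi_h^{(k,1)}(\wh u-\wh\Pi_h^{(k,2)}\wh u)$ and bounding each summand by applying the univariate estimate~\eqref{eq:t18:1d} in the active direction together with the $L^2$- and $H^1$-stability of the univariate projector (itself a consequence of its orthogonality/best-approximation character) in the inactive direction, integrating over the inactive variable by Fubini. The $L^2$, $H^1$-seminorm and mixed-second-derivative bounds then drop out with the stated powers of $\wh h_k$. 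The $H^1$-seminorm minimization is the best-approximation property of the tensor projector, which I would take from~\cite{Takacs2018,Sande2022}, where it is shown that this particular tensor construction realizes the $H^1(\wh\Omega)$-seminorm best approximation in $\wh V_h^{(k)}$.

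I expect the main obstacle to be the structural half rather than the estimates: establishing the restriction-commutes-with-projection identity cleanly (so that~\eqref{eq:t18:edge} genuinely reduces to a one-dimensional statement, which relies on the endpoint reproduction interacting correctly with the tensor product) and, relatedly, transferring the univariate best approximation to the bivariate $H^1$-seminorm minimization. The approximation inequalities themselves are routine once~\eqref{eq:t18:1d} and the stability of $\wh\Pi_h^{(k,\delta)}$ are in hand; the only care needed there is verifying that the telescoping terms remain in the correct spaces, which is where the $H^2(\wh\Omega)$ hypothesis is used, and tracking the geometry factor $H_k$ relating $\wh h_k$ to $h_k$ in~\eqref{eq:t18:approx-edge}.
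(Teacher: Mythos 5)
Your proposal matches the paper's approach: the paper defines $\wh\Pi_h^{(k)}$ as precisely this tensor product of the univariate $H^1_D$-orthogonal projectors (in the paragraph preceding the lemma) and proves the lemma by citing \cite[Theorems~3.1--3.4]{Takacs2018} together with \cite{Sande2022} for the extension to $q_k\in\{1,\ldots,p\}$, i.e., by the same endpoint-reproduction, restriction-commutes-with-projection, and tensor-telescoping arguments you reconstruct. One small repair: \eqref{eq:t18:approx-edge} does not follow by ``combining the two bounds in \eqref{eq:t18:1d} with $q_k=1$'' (both right-hand sides then involve $|\wh u|_{H^2}$, which cannot be bounded by the error seminorm), but your duality argument is valid, as is the paper's intended one-line derivation of applying the $q_k=0$ case of \eqref{eq:t18:1d} to the error itself, which is admissible since $\wh\Pi_h^{(k,\delta)}\bigl(\wh u-\wh\Pi_h^{(k,\delta)}\wh u\bigr)=0$; note also that on the parameter edge the resulting factor is $\wh h_k$ (the $h_k$ in \eqref{eq:t18:approx-edge} should be read accordingly).
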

The statements~\eqref{eq:t18:corner} and~\eqref{eq:t18:edge} are direct consequence of  \cite[Theorem~3.4]{Takacs2018}. For the case $q_k = 1$ \cite[Theorem~3.3]{Takacs2018} gives~\eqref{eq:QI_err:param:h1}; the estimates~\eqref{eq:QI_err:param:l2} and \eqref{eq:QI_err:param:h2} follow analogously from \cite[Theorems~3.1 and 3.2]{Takacs2018}. \eqref{eq:t18:approx-edge} directly follows from~\eqref{eq:t18:1d}. Using the approximation error estimates from~\cite{Sande2022}, we can immediately extend the result to arbitrary $q_k \in \{1,\hdots,p\}$. The estimates~\eqref{eq:QI_err:param:l2} and \eqref{eq:QI_err:param:h1} can be carried over to the physical patch $\Omega_k$ using the following lemma.
\begin{Lemma}\label{lem:map}
	For all patches $\Omega_k$ and all $u\in H^r(\Omega_k)$ with $r=1,\ldots,r_k+1$, the following estimates hold:
		\[
			\|u\|_{L^2(\Omega_k)}^2
			\eqsim H_k^2 \|u\circ G_k\|_{L^2(\wh \Omega)}^2
			\quad\mbox{and}\quad
            \sum_{\ell=1}^r
			|u|_{H^\ell(\Omega_k)}^2
		    \eqsim
		    \sum_{\ell=1}^r
		    H_k^{2-2\ell}
		    |u\circ G_k|_{H^\ell(\wh \Omega)}^2.
		\]
\end{Lemma}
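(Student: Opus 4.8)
The plan is to pass between the two domains via the substitution $\wh u := u\circ G_k$ together with the change-of-variables formula and a repeated application of the chain rule, controlling every arising factor by \thref{ass:map}. The preliminary observation is that $|\det \nabla G_k|\eqsim H_k^2$ pointwise on $\wh\Omega$. Writing $\sigma_1\ge \sigma_2>0$ for the singular values of $\nabla G_k$, the bound $\|\nabla G_k\|_{L^\infty(\wh\Omega)}\lesssim H_k$ (the case $j=1$ of \thref{ass:map}) gives $\sigma_1\lesssim H_k$, while $\|(\nabla G_k)^{-1}\|_{L^\infty(\wh\Omega)}\lesssim H_k^{-1}$ forces $\sigma_2\gtrsim H_k$; hence $|\det\nabla G_k|=\sigma_1\sigma_2\eqsim H_k^2$. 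The $L^2$ estimate then follows immediately from $\|u\|_{L^2(\Omega_k)}^2=\int_{\wh\Omega}|\wh u|^2\,|\det\nabla G_k|\diff\wh x$.

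For the seminorm equivalence I would first record the scaling of the derivatives of the inverse map. Differentiating the identity $\nabla G_k(G_k^{-1}(x))\,\nabla(G_k^{-1})(x)=I$ repeatedly and arguing by induction on the differentiation order, I would show $\|\partial^j(G_k^{-1})\|_{L^\infty(\Omega_k)}\lesssim H_k^{-1}$ for all $j=1,\ldots,r_k+1$. The base case $j=1$ is exactly the second bound in \thref{ass:map}; in the inductive step each $\partial^j(G_k^{-1})$ is expressed through $(\nabla G_k)^{-1}$, derivatives of $G_k$ up to order $j$ (scaling like $H_k^{j}$ by \thref{ass:map}), and lower-order derivatives of $G_k^{-1}$ (scaling like $H_k^{-1}$ by the inductive hypothesis), and one checks that the powers of $H_k$ always cancel to $H_k^{-1}$. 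It is worth stressing that, perhaps counter-intuitively, \emph{all} derivatives of $G_k^{-1}$ share the same scaling $H_k^{-1}$, in contrast to the $H_k^{j}$ scaling of $\partial^j G_k$.

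With these bounds at hand, both inequalities follow from the multivariate Fa\`a di Bruno formula. For the forward direction, each order-$\ell$ derivative of $\wh u=u\circ G_k$ is a finite sum of terms, each being a derivative of $u$ of some order $m\le\ell$ evaluated at $G_k$ times a product of derivatives of $G_k$ whose orders add up to $\ell$; by \thref{ass:map} every such product is $\lesssim H_k^{\ell}$, so that $|\partial^\alpha \wh u|\lesssim H_k^{\ell}\sum_{m=1}^{\ell}|(\partial^m u)\circ G_k|$ for $|\alpha|=\ell$. Taking $L^2(\wh\Omega)$-norms, using $\int_{\wh\Omega}|(\partial^m u)\circ G_k|^2\diff\wh x\eqsim H_k^{-2}|u|_{H^m(\Omega_k)}^2$, and multiplying by $H_k^{2-2\ell}$ yields $H_k^{2-2\ell}|\wh u|_{H^\ell(\wh\Omega)}^2\lesssim \sum_{m=1}^{\ell}|u|_{H^m(\Omega_k)}^2$, which, summed over $\ell=1,\ldots,r$, gives one direction. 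The reverse direction is completely symmetric: expanding $u=\wh u\circ G_k^{-1}$, the product of $m$ inverse-map derivatives now scales like $H_k^{-m}$ (each factor contributing $H_k^{-1}$), precisely the power needed to reproduce $\sum_{\ell=1}^{r}|u|_{H^\ell(\Omega_k)}^2\lesssim \sum_{m=1}^{r}H_k^{2-2m}|\wh u|_{H^m(\wh\Omega)}^2$. Combining the two estimates gives the claimed equivalence. I expect the main obstacle to be the second step, namely pinning down the uniform $H_k^{-1}$ scaling of \emph{all} higher derivatives of $G_k^{-1}$; this is also exactly why the statement is an equivalence of the \emph{sums} of seminorms rather than a term-by-term identity, since the chain rule unavoidably couples derivatives of different orders.
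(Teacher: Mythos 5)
Your proof is correct and takes essentially the same route as the paper, whose proof is a one-line appeal to Assumption~\ref{ass:map} together with ``standard chain and substitution rules'' --- exactly the ingredients you spell out in detail (change of variables with $|\det \nabla G_k|\eqsim H_k^2$, Fa\`a di Bruno in both directions). You also correctly identify the one genuinely delicate point, namely that under the scaling $\|\partial^j G_k\|_{L^\infty(\wh\Omega)}\lesssim H_k^j$ all derivatives of $G_k^{-1}$ scale uniformly like $H_k^{-1}$, which is precisely why the statement is an equivalence of sums of seminorms rather than a term-by-term one.
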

\begin{Proof}
    These results follow from \thref{ass:map} and standard chain and substitution rules.
\end{Proof}
We define $\Pi_h^{(k)}$ using the pull-back principle:
\[
    \Pi_h^{(k)} u := (\wh\Pi_h^{(k)} (u\circ G_k))\circ G_k^{-1}.
\]
Using this definition, \thref{lem:map}, \eqref{eq:QI_err:param:l2}, \eqref{eq:QI_err:param:h1}, $H_k \le \mbox{diam }\Omega \eqsim 1$ and $h_k =\wh h_k H_k$, we have
\begin{align}\label{eq:QI_err:l2}
            \| u - \Pi_h^{(k)} u \|_{L^2(\Omega_k)}
            \lesssim
            H_k
            \| \wh u - \wh \Pi_h^{(k)} \wh u \|_{L^2(\wh\Omega)}
            &\le
            H_k
            \left(\tfrac{\wh h_k}{\pi}\right)^{1+q_k}
            |\wh u|_{H^{1+q_k}(\wh\Omega)}
            \lesssim
            \left(\tfrac{h_k}{\pi}\right)^{1+q_k}
            \|u\|_{H^{1+q_k}(\Omega_k)},
            \\
            \label{eq:QI_err:h1}
            \| u - \Pi_h^{(k)} u \|_{H^1(\Omega_k)}
            \lesssim
            \| \wh u - \wh \Pi_h^{(k)} \wh u \|_{H^1(\wh\Omega)}
            &\le
            \left(\tfrac{\wh h_k}{\pi}\right)^{q_k}
            |\wh u|_{H^{1+q_k}(\wh\Omega)}
            \lesssim
            \left(\tfrac{h_k}{\pi}\right)^{q_k}
            \|u\|_{H^{1+q_k}(\Omega_k)},
            \\
            \label{eq:QI_err:h2}
            \wh h_k^{2} \| \partial_{\wh x, \wh y} (\wh u - \wh \Pi_h^{(k)} \wh u) \|_{L^2(\wh\Omega)}
            &\le
            \left(\tfrac{\wh h_k}{\pi}\right)^{q_k}
            |\wh u|_{H^{1+q_k}(\wh\Omega)}
            \lesssim
            \left(\tfrac{h_k}{\pi}\right)^{q_k}
            \|u\|_{H^{1+q_k}(\Omega_k)}
\end{align}
for all $u\in H^{1+q_k}(\Omega_k)$ with pull-back $\wh u:= u \circ G_k$.
If $q_k$ is not an integer, the same estimates can be derived both for $\lfloor q_k \rfloor$ and $\lceil q_k \rceil$; the desired result is than obtained by the Hilbert space interpolation theorem, see \cite[Theorem 7.23]{Adams2003}.

\begin{Remark}
The result from \cite{Takacs2018} is based on \cite{Takacs2016}, which holds for uniform grids (which we have assumed also in this paper). An extension to quasi-uniform grids is possible by using the results from~\cite{Sande2022}.
\end{Remark}

\begin{Remark}
    If there were no T-junctions and if the trace spaces on all edges would match, i.e.,
    \[
            V_h^{(k)}|_{\Gamma_i} = V_h^{(\ell)}|_{\Gamma_i}
    \]
    for all edges $\Gamma_i$ with adjacent patches $\Omega_k$ and $\Omega_\ell$
    ($\{k,\ell\} = \PP(\Gamma_i)$),
    then~\eqref{eq:t18:corner} and~\eqref{eq:t18:edge} would guarantee that a patch-wise defined function $u_h$ with
    $
            u_h|_{\Omega_k} := \Pi_k (u|_{\Omega_k})
    $
    would be continuous and thus satisfy $u_h\in V_h$. As a consequence, \thref{thrm:main-ho} would directly follow from~\eqref{eq:QI_err:l2} and~\eqref{eq:QI_err:h1}.
\end{Remark}
Since we only require \thref{ass:nested}, we have to construct a correction in order to guarantee the continuity of $u_h$, which is required to obtain $u_h \in V_h \subset H^1(\Omega)$.

\subsection{Traces for edges and vertices}

To achieve comparable quasi-interpolation error estimates within lower-dimensional manifolds (edges in this case), it is imperative to establish the subsequent trace inequalities. Using the fundamental theorem of calculus and Young's inequality, we immediately obtain
\[
        |u(x)|^2 \le \|u\|_{L^2(0,1)}^2 +  \|u\|_{L^2(0,1)}|u|_{H^1(0,1)}
        \le 2\eta^{-1} \|u\|_{L^2(0,1)}^2 + \eta |u|_{H^1(0,1)}^2
\]
for all $u\in H^1(0,1)$, all $x\in [0,1]$ and all $\eta \in (0,1]$.
By building corresponding tensor products, we immediately obtain as follows.
\begin{Lemma}\label{lem:para:trace}
    For all $\wh h\in (0,1]$, it holds that
    \begin{align*}
        \sum_{i \in \EE(\Omega_k)}
        \|\wh u\|^2_{L^2(\wh \Gamma_i^{(k)})}
        &\lesssim \wh h^{-1} \|\wh u\|_{L^2(\wh \Omega)}^2
        + \wh h |\wh u|_{H^1(\wh \Omega)}^2 
        \quad \mbox{for all}\quad \wh u \in H^1(\wh \Omega),\\
        \sum_{i \in \EE(\Omega_k)}
        |\wh u|^2_{H^1(\wh \Gamma_i^{(k)})}
        &\lesssim  
        \wh h^{-1} |\wh u|_{H^1(\wh\Omega)}^2
        + \wh h \|\partial_{\wh x,\wh y} \wh u\|_{L^2(\wh \Omega)}^2
            \quad \mbox{for all}\quad \wh u \in H^2(\wh \Omega),
    \end{align*}
    where $\wh \Gamma_i^{(k)}:=G_k^{-1}(\Gamma_i)$ is the pull-back of $\Gamma_i$.
\end{Lemma}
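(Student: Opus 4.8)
The plan is to establish the two-dimensional trace estimates as tensor-product consequences of the stated one-dimensional inequality. First I would note that the key univariate bound, namely $|u(x)|^2 \le 2\eta^{-1}\|u\|_{L^2(0,1)}^2 + \eta|u|_{H^1(0,1)}^2$ for all $x\in[0,1]$ and $\eta\in(0,1]$, is already in hand. The four sides $\wh\Gamma_i^{(k)}$ of the unit square $\wh\Omega=(0,1)^2$ are each obtained by fixing one coordinate at $0$ or $1$; by symmetry it suffices to treat one representative side of each orientation, say $\{0\}\times(0,1)$ (a ``left/right'' side) and $(0,1)\times\{0\}$ (a ``bottom/top'' side), and then sum the four contributions.

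For the first estimate, I would fix the side $\{0\}\times(0,1)$ and regard $\wh u(0,\cdot)$ as the trace of the function $x\mapsto \wh u(x,y)$ at $x=0$, for each fixed $y$. Applying the univariate bound in the $x$-direction with the free parameter set to $\eta=\wh h$ gives a pointwise-in-$y$ estimate for $|\wh u(0,y)|^2$ in terms of $\int_0^1 |\wh u(x,y)|^2\diff x$ and $\int_0^1 |\partial_x \wh u(x,y)|^2\diff x$. Integrating this inequality over $y\in(0,1)$ yields
\[
        \|\wh u\|_{L^2(\{0\}\times(0,1))}^2
        \lesssim \wh h^{-1}\|\wh u\|_{L^2(\wh\Omega)}^2
        + \wh h\, \|\partial_x \wh u\|_{L^2(\wh\Omega)}^2
        \le \wh h^{-1}\|\wh u\|_{L^2(\wh\Omega)}^2 + \wh h\, |\wh u|_{H^1(\wh\Omega)}^2.
\]
The bottom side $(0,1)\times\{0\}$ is handled identically with the roles of $x$ and $y$ exchanged, using $\int_0^1|\partial_y\wh u|^2 \le |\wh u|_{H^1(\wh\Omega)}^2$. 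Summing over the (at most four) sides in $\EE(\Omega_k)$ absorbs the finitely many contributions into the implied constant and gives the first claimed inequality for all $\wh u\in H^1(\wh\Omega)$.

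For the second estimate, I would instead apply the univariate trace bound to the tangential derivative of $\wh u$ along each edge. On the side $\{0\}\times(0,1)$ the relevant $H^1$-seminorm is $\|\partial_y\wh u(0,\cdot)\|_{L^2(0,1)}^2$; treating $\partial_y\wh u$ as a function of $x$ for each fixed $y$ and again taking the trace at $x=0$ with $\eta=\wh h$ produces, after integration in $y$, a bound by $\wh h^{-1}\|\partial_y\wh u\|_{L^2(\wh\Omega)}^2 + \wh h\,\|\partial_x\partial_y\wh u\|_{L^2(\wh\Omega)}^2$. The first term is controlled by $\wh h^{-1}|\wh u|_{H^1(\wh\Omega)}^2$ and the mixed second derivative $\partial_x\partial_y\wh u$ is one of the components of $\partial_{\wh x,\wh y}\wh u$, so the second term is bounded by $\wh h\,\|\partial_{\wh x,\wh y}\wh u\|_{L^2(\wh\Omega)}^2$. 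Summing over sides gives the second inequality, which requires $\wh u\in H^2(\wh\Omega)$ precisely because the mixed derivative must be square-integrable. I do not expect any serious obstacle here; the only point demanding minor care is the bookkeeping of which second-order derivative appears for each edge orientation, and the verification that $\partial_x\partial_y\wh u$ (rather than a pure second derivative) is what the tensor-product structure naturally produces, so that the right-hand side is expressible through the mixed-derivative norm $\|\partial_{\wh x,\wh y}\wh u\|_{L^2(\wh\Omega)}$ as stated.
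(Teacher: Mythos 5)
Your proposal is correct and takes essentially the same route as the paper: a one-dimensional trace bound with parameter $\eta=\wh h$ applied in the direction transverse to each side of $\wh\Omega$ (the paper phrases this as the fundamental theorem of calculus applied to $\wh u^2$ plus Young's inequality, which is the same computation), summed over the four sides, with the second estimate obtained by substituting the tangential derivative so that, as you correctly observe, only the mixed derivative $\partial_{\wh x}\partial_{\wh y}\wh u$ appears for either edge orientation. The one bookkeeping nuance --- which the paper covers with the remark $\bigcup_{i\in\EE(\Omega_k)}\wh\Gamma_i^{(k)}\subseteq\partial\wh\Omega$ and which your phrase ``the (at most four) sides in $\EE(\Omega_k)$'' slightly misstates --- is that $\EE(\Omega_k)$ indexes edges, of which there may be more than four when T-junctions subdivide a side; since distinct edges pull back to disjoint sub-segments of the four sides, your per-side estimates still dominate the sum over all $i\in\EE(\Omega_k)$.
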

\begin{Proof}
    Let $\wh \Gamma$ be one of the four sides of $\wh \Omega$.
    Without loss of generality we assume $\wh \Gamma = (0,1) \times \{0\}$.

    By applying the fundamental theorem of calculus to $\wh u^2$ and using Young's inequality, we immediately obtain
    \begin{equation}\label{eq:para:trace}
        \|\wh u\|^2_{L^2(\wh\Gamma)} \lesssim \|\wh u\|_{L^2(\wh\Omega)}^2 +\|\wh u\|_{L^2(\wh \Omega)}\|\partial_{\wh y} \wh u\|_{L^2(\wh \Omega)}
        \lesssim \wh h^{-1} \|\wh u\|_{L^2(\wh \Omega)}^2
        + \wh h |\wh u|_{H^1(\wh \Omega)}^2 
        .
    \end{equation}
    By taking the sum over all four sides, we obtain a corresponding result for $\partial\wh\Omega$. The first statement of the lemma follows since
    $\bigcup_{i\in \EE(\Omega_k)} \wh \Gamma_i^{(k)} \subseteq \partial\wh\Omega$. (We obtain equality if $\partial\Omega_k$ does not contribute to the (Dirichlet) boundary $\partial\Omega$.) 
    The second estimate is obtained analogously by substituting the derivative of $\wh u$ in direction of $\wh\Gamma$ into~\eqref{eq:para:trace}. 
\end{Proof}
The relation between the norm on $\Gamma_i$ and on its pull-back $\wh\Gamma_i^{(k)}$ is provided by the following lemma.
\begin{Lemma}\label{lem:map-if}
  For all patches $\Omega_k$ with adjacent edge $\Gamma_i$ $(i\in \EE(\Omega_k))$ and pre-image $\widehat \Gamma_i^{(k)} = G_k^{-1}(\Gamma_i)$, the following estimates hold:
		\[
				\|u\|_{L^2(\Gamma_i)}^2
				\eqsim H_k \|u\circ G_k\|_{L^2(\wh \Gamma_i^{(k)})}^2
				\quad\mbox{and}\quad
				|u|_{H^1(\Gamma_i)}^2
			    \eqsim
			    H_k^{-1}
			    |u\circ G_k|_{H^1(\wh \Gamma_i^{(k)})}^2,
		\]
   for all $u\in L^2(\Gamma_i)$ or $u\in H^1(\Gamma_i)$, respectively.
\end{Lemma}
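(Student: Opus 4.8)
The plan is to treat this as the one-dimensional, edge-restricted analogue of \thref{lem:map}, proved by the same change-of-variables argument, but applied to a single parameter line rather than to the full parameter domain $\wh\Omega$. Since $\Gamma_i$ is the image under $G_k$ of a segment $\wh\Gamma_i^{(k)}$ of one of the four sides of the unit square, I would parametrize it by the restriction $g := G_k|_{\wh\Gamma_i^{(k)}}$ of the geometry map to that parameter line. \thref{ass:map} supplies exactly the bounds needed for such restrictions: the first-derivative estimate $\|\partial g\|_{L^\infty(0,1)} \le C_G H_k$ gives an upper bound on the tangential speed, while $\||\partial g|^{-1}\|_{L^\infty(0,1)} \le C_G H_k^{-1}$ gives a matching lower bound, so that $|\partial g(t)| \eqsim H_k$ uniformly in $t$.

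For the $L^2$-estimate I would pass to arc length on $\Gamma_i$, writing $\diff s = |\partial g(t)|\,\diff t$, so that
\[
    \|u\|_{L^2(\Gamma_i)}^2 = \int_0^1 |(u\circ G_k)(t)|^2\,|\partial g(t)|\,\diff t.
\]
Replacing $|\partial g(t)|$ by its two-sided bound $\eqsim H_k$ and pulling it out of the integral immediately yields $\|u\|_{L^2(\Gamma_i)}^2 \eqsim H_k\,\|u\circ G_k\|_{L^2(\wh\Gamma_i^{(k)})}^2$.

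For the $H^1$-seminorm, the key point is to express the tangential derivative on $\Gamma_i$ through the chain rule. Since $g'(t)$ is tangent to $\Gamma_i$, one has $\tfrac{\diff}{\diff t}(u\circ g) = |\partial g(t)|\,\partial_s u$, hence $\partial_s u = |\partial g(t)|^{-1}\,\partial_t(u\circ G_k)$. Inserting this into $|u|_{H^1(\Gamma_i)}^2 = \int_{\Gamma_i} |\partial_s u|^2\,\diff s$ and again using $\diff s = |\partial g(t)|\,\diff t$, one of the two factors of $|\partial g|$ cancels while one survives in the denominator, leaving a net weight $|\partial g(t)|^{-1} \eqsim H_k^{-1}$; this gives the second claim.

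The only real care needed --- and the main, essentially minor, obstacle --- is the bookkeeping of the tangential derivative against the Jacobian factor: one must verify that exactly one power of the speed lands in the denominator, so that the exponent of $H_k$ is $-1$ rather than $+1$, and that the two-sided bound on $|\partial g|$ is genuinely available for restrictions to parameter lines, which is precisely what the second group of estimates in \thref{ass:map} provides. I would also remark that, since $\wh\Gamma_i^{(k)}$ may be only a sub-segment of a full side of $\wh\Omega$ (as happens at a T-junction), the argument is insensitive to the length of the interval of integration, so that the equivalence constants remain uniform.
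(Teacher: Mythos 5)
Your proof is correct and takes essentially the same approach as the paper: the paper's own proof is the single remark that the result follows ``analogously to \thref{lem:map}, using standard chain and substitution rules,'' which is exactly the change-of-variables computation you carry out, relying on the two-sided bound $|\partial g|\eqsim H_k$ for restrictions of $G_k$ to parameter lines supplied by \thref{ass:map}. Your explicit bookkeeping (one factor of $|\partial g|$ from $\mathrm{d}s$ in the $L^2$ case; two inverse factors from the tangential derivative against one from $\mathrm{d}s$ in the $H^1$ case, leaving net weight $H_k^{-1}$) correctly fills in the details the paper leaves implicit, including the observation that the argument is insensitive to $\wh\Gamma_i^{(k)}$ being only a sub-segment of a side of $\wh\Omega$.
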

\begin{Proof}
    Analogously to \thref{lem:map}, this result follows using standard chain and substitution rules.
\end{Proof}

With similar arguments, we obtain the following trace theorem.
\begin{Lemma}\label{lem:trace:vtx}
    For all patches $\Omega_k$ with adjacent edge $\Gamma_i$ $(i\in \EE(\Omega_k))$ and pre-image $\widehat \Gamma_i^{(k)} = G_k^{-1}(\Gamma_i)$, the following estimates hold:
    \begin{align*}
        \sum_{m\in \VV(\Gamma_i) } |\wh u(\wh \x_m^{(k)})|^2
        &\lesssim
        \wh h_k^{-1} \|\wh u\|_{L^2(\wh\Gamma_i^{(k)})}^2
        + \wh h_k|\wh u|_{H^1(\wh \Gamma_i^{(k)})}^2
        \quad \mbox{for all}\quad \wh u \in H^1(\wh\Gamma_i^{(k)})
        \\
        \sum_{m\in \VV(\Omega_k) } |\wh u(\wh \x_m^{(k)})|^2
        &\lesssim
        \wh h_k^{-2} \|\wh u\|_{L^2(\wh\Omega)}^2
        + |\wh u|_{H^1(\wh \Omega)}^2
        + \wh h_k^{2}  \|\partial_{\wh x,\wh y}\wh u\|_{L^2(\wh\Omega)}^2
        \quad \mbox{for all}\quad \wh u \in H^2(\wh\Omega),
    \end{align*}
    where $\wh\x_m^{(k)}:=G_k^{-1}(\x_m)$ is the pull-back of the vertex $\x_m$.
\end{Lemma}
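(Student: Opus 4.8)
The plan is to obtain both estimates from the scalar point-value inequality stated just before \thref{lem:para:trace}, namely $|u(x)|^2 \le 2\eta^{-1}\|u\|_{L^2(0,1)}^2 + \eta\,|u|_{H^1(0,1)}^2$ for $u\in H^1(0,1)$, $x\in[0,1]$ and $\eta\in(0,1]$, combined with an affine rescaling of the underlying interval and, for the vertices of a full patch, the parametric trace theorem \thref{lem:para:trace}. The central bookkeeping issue throughout is to keep the free parameter $\eta$ below $1$ while arranging the powers of $\wh h_k$ so as to match the claimed right-hand sides.

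For the first estimate I would observe that $\wh\Gamma_i^{(k)}$ is a segment of one of the four sides of $\wh\Omega$, whose two endpoints are exactly the pull-backs $\wh\x_m^{(k)}$ with $m\in\VV(\Gamma_i)$. Writing $L:=|\wh\Gamma_i^{(k)}|$ and rescaling $(0,L)$ to $(0,1)$ (which multiplies the squared $L^2$-norm by $L^{-1}$ and the squared $H^1$-seminorm by $L$), the scalar inequality yields $|\wh u(\wh\x_m^{(k)})|^2 \le 2\eta^{-1}L^{-1}\|\wh u\|_{L^2(\wh\Gamma_i^{(k)})}^2 + \eta L\,|\wh u|_{H^1(\wh\Gamma_i^{(k)})}^2$. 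Choosing $\eta:=\wh h_k/L$ collapses the two prefactors to the desired $\wh h_k^{-1}$ and $\wh h_k$. The admissibility $\eta\le1$, i.e.\ $\wh h_k\le L$, is precisely where \thref{ass:compatibility} enters, since it guarantees $L\ge p\,\wh h_k\ge\wh h_k$. Summing over the two endpoints gives the first claim.

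For the second estimate I would evaluate at a vertex $\wh\x_m^{(k)}\in\partial\wh\Omega$ by iterating a one-dimensional trace. Each such point lies on a full side $\wh\Gamma$ of $\wh\Omega$ (of length one), so the scalar inequality with $\eta=\wh h_k$ gives $|\wh u(\wh\x_m^{(k)})|^2 \lesssim \wh h_k^{-1}\|\wh u\|_{L^2(\wh\Gamma)}^2 + \wh h_k\,|\wh u|_{H^1(\wh\Gamma)}^2$. I would then insert the two trace bounds of \thref{lem:para:trace} for $\|\wh u\|_{L^2(\wh\Gamma)}^2$ and $|\wh u|_{H^1(\wh\Gamma)}^2$. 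On expanding the product, the two cross terms both produce $|\wh u|_{H^1(\wh\Omega)}^2$, while the remaining terms give $\wh h_k^{-2}\|\wh u\|_{L^2(\wh\Omega)}^2$ and $\wh h_k^{2}\|\partial_{\wh x,\wh y}\wh u\|_{L^2(\wh\Omega)}^2$, reproducing exactly the stated right-hand side for a single vertex. Summing over $m\in\VV(\Omega_k)$ then costs a factor equal to the number of vertices on $\partial\Omega_k$.

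The main obstacle is precisely this last summation: since every vertex contributes the full $\wh\Omega$-norms, the naive sum multiplies the right-hand side by $|\VV(\Omega_k)|$, whereas the asserted $\lesssim$ is supposed to depend on $C_G$ alone. I would therefore argue that the refinement strategy of Section~\ref{sec:3} (together with \thref{ass:matching}, \thref{ass:simple:t} and the grid-disparity control built into \thref{ass:compatibility}) forces a uniformly bounded number of corners and T-junctions on any one patch boundary, so that $|\VV(\Omega_k)|\lesssim1$ and the summation only introduces a constant factor. A minor secondary point is that the scalar inequality is applied at an \emph{interior} point of a side for T-junctions and at an \emph{endpoint} for corners; since it holds for every $x\in[0,1]$, both cases are handled by the same estimate.
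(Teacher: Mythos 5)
Your treatment of the first estimate is correct and coincides with the paper's argument: the fundamental theorem of calculus plus Young's inequality applied on the pre-image segment, with \thref{ass:compatibility} supplying $|\wh\Gamma_i^{(k)}|\ge p\,\wh h_k\ge \wh h_k$ so that your choice $\eta=\wh h_k/|\wh\Gamma_i^{(k)}|$ is admissible, followed by a sum over the two endpoints.

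The second estimate, however, contains a genuine gap. You evaluate every vertex against the norms on a \emph{full} side of $\wh\Omega$, which costs a factor $|\VV(\Omega_k)|$, and you then try to remove it by asserting that the refinement strategy keeps $|\VV(\Omega_k)|$ uniformly bounded. That assertion is false under the paper's assumptions: \thref{ass:compatibility} only forces each edge pre-image to have length at least $p\,\wh h_k$, so a coarse patch whose neighbors are repeatedly subdivided can acquire on the order of $\wh h_k^{-1}/p$ edges, and hence that many T-junctions, along a single side. The paper itself confirms this: \thref{thrm:main-lo} carries the explicit factor $|\VV(\Omega_\ell)|$ precisely because it is \emph{not} bounded, while \thref{thrm:main-ho}, which relies on the present lemma, has no such factor --- so the lemma cannot afford one either. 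The paper's proof avoids the factor by deriving the second estimate \emph{from the first one} rather than from scratch: the pre-images $\wh\Gamma_i^{(k)}$, $i\in\EE(\Omega_k)$, are pairwise disjoint subsegments of $\partial\wh\Omega$, and every $m\in\VV(\Omega_k)$ is an endpoint of at least one (and at most a bounded number) of these edges. Summing your first estimate over $i\in\EE(\Omega_k)$ therefore bounds $\sum_{m\in\VV(\Omega_k)}|\wh u(\wh\x_m^{(k)})|^2$ by $\wh h_k^{-1}\|\wh u\|_{L^2(\partial\wh\Omega)}^2+\wh h_k|\wh u|_{H^1(\partial\wh\Omega)}^2$ with no vertex-counting factor, and inserting the two bounds of \thref{lem:para:trace} (with $\wh h=\wh h_k$) then yields exactly $\wh h_k^{-2}\|\wh u\|_{L^2(\wh\Omega)}^2+|\wh u|_{H^1(\wh\Omega)}^2+\wh h_k^{2}\|\partial_{\wh x,\wh y}\wh u\|_{L^2(\wh\Omega)}^2$; your own expansion of the cross terms goes through verbatim at that point. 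In short: localize to the edge segments \emph{before} summing --- the per-vertex cost is amortized by the disjointness of the $\wh\Gamma_i^{(k)}$, not by any bound on the number of vertices.
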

\begin{Proof}
    Let $m\in \VV(\Gamma_i)$. Because of \thref{ass:compatibility}, we know that $|\wh \Gamma_i^{(k)}| \ge \wh h_k$ and thus we have using the fundamental theorem of calculus and Young's inequality
    \[
        |\wh u(\widehat{\mathbf x}^{(k)}_m)|^2
        \lesssim
        \wh h_k^{-1} \|\wh u\|_{L^2(\wh \Gamma_i^{(k)})}^2
        + \wh h_k |\wh u|_{H^1(\wh \Gamma_i^{(k)})}^2.
    \]
    Since each edge is only adjacent to two vertices, this gives the first estimate. Analogously to \thref{lem:para:trace}, the second estimate follows from the first one.
\end{Proof}

\subsection{Extensions for vertices and edges}\label{sec:5:4}

As we are working with non-conforming grids, it is necessary to modify the standard projections described in earlier sections. For this purpose, we need a function $\psi_\eta\in \wh V_h^{(k,\delta)}$ such that $\psi_\eta(0)=1$, $\psi_\eta(1)=0$ and such that its norm is bounded as discussed below. Such functions are given by
\begin{equation*}
    \psi_{\eta}(t) := (\max\{1 - t/\eta,0\})^p 
\end{equation*}
if $\eta$ is a knot in the corresponding mesh. Since we assume that the grids on the parameter domain are uniform (\thref{ass:uniform}), this is guaranteed if $\eta\in (0,1]$ is an integer multiple of $\wh h_k$. Straight-forward computations yield as follows.
\begin{Lemma}\label{lem:phinorm}
    For every $\eta\in (0,1]$, we have
    $\|\psi_{\eta}\|^2_{L^2(0,1)} \eqsim p^{-1} \eta$ 
    and
    $|\psi_{\eta}|^2_{H^1(0,1)} \eqsim p \eta^{-1}$.
\end{Lemma}

From~\eqref{eq:t18:corner}, we immediately know that the local approximation agree at corner vertices. However this is not the case for T-junctions, for which we need to introduce additional corrections in order to guarantee continuity. For each T-junction $\x_m$, let $\kk(\x_m)$ be the index of the patch $\Omega_{\kk(\x_m)}$ such that
$
    m \in \TT(\Omega_{\kk(\x_m)})
$.
In the example depicted in Figure~\ref{fig:ext-t}, we have $\kk(\x_1)=1$.
Due to \thref{ass:simple:t}, there are two more patches that are adjacent to $\x_m$, which we call $\Omega_{\ell}$ and $\Omega_{\ell'}$. The edge between $\Omega_{\ell}$ and $\Omega_{\ell'}$ is called $\Gamma_i$. Let $\ell$ and $\ell'$ be such that 
\begin{equation}\label{eq:tcorr:edgcond}
    V_h^{(\ell)}\big|_{\Gamma_i}\subseteq V_h^{(\ell')}\big|_{\Gamma_i}.
\end{equation}
In the example depicted in Figure~\ref{fig:ext-t}, we have thus $\ell=3,\ell'=2$ and $i=3$.
\begin{figure}[ht]
    \centering
        \begin{tikzpicture}
    \node[anchor=south west,inner sep=0] (image) at (0,0) {\includegraphics[width=.45\textwidth]{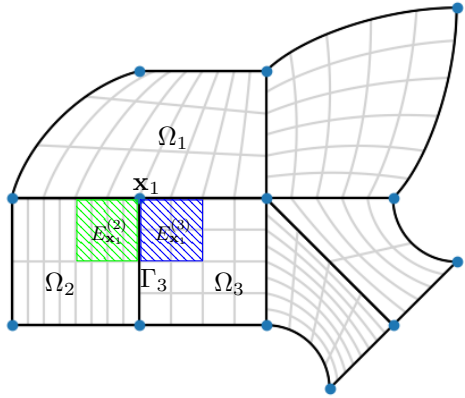}};
    
    \draw [fill=green,green, pattern = north west lines, pattern color =green] (1.32,2.11) rectangle (2.12,2.92); 
    \draw [fill=blue,blue,pattern = north west lines, pattern color =blue] (2.17,2.11) rectangle (2.99,2.92);

    \node[scale=0.75] at (1.75,2.5) {$E_{\x_1}^{(2)}$};
    \node[scale=0.75] at (2.6,2.5) {$E_{\x_1}^{(3)}$};
    
    \node at  (2.6,3.75) {$\Omega_1$};
    \node at  (1.1,1.8) {$\Omega_2$};
    \node at  (3.35,1.8) {$\Omega_3$};
    \node at  (2.35,1.85) {$\Gamma_3$};

    \node at  (2.25,3.1) {$\x_1$};
    
    \end{tikzpicture}
    \caption{\label{fig:ext-t} Extensions for the T-junction $\x_1$ are defined on $\Omega_2$ (support in green) and $\Omega_3$ (support in blue); their depth into the finer patches is adjusted by $h_{\kk(\x_m)}=h_1$, still aligned with the grids on which they are defined. The extensions agree on the adjacent edge $\Gamma_3$ and vanish on all other vertices.}
\end{figure}

The extension operator $E_{\x_m}^{(\ell)}:\RR \to V_h^{(\ell)}$ is defined by
$E_{\x_m}^{(\ell)}s = (\wh E_{\x_m}^{(\ell)} s) \circ G_\ell^{-1}$ with
\begin{equation}\label{eq:e:x:def}
        (\wh E_{\x_m}^{(\ell)}\;s)(\wh x, \wh y)
        :=
        \psi_{\eta}(\wh x)\psi_{\eta}(\wh y)
        s
        \quad \mbox{and} \quad
        \eta := \min\{ \lceil\tfrac{p h_{\kk(\x_m)}}{h_\ell} \rceil , \lfloor \tfrac{|\wh \Gamma_i^{(\ell)}|}{\wh h_\ell}\rfloor  \} \wh h_\ell ,
\end{equation}
where $\lfloor\cdot\rfloor$ and $\lceil\cdot\rceil$ denote the floor and ceiling functions.

The idea is that $\eta \sim p h_{\kk(\x_m)} H_\ell^{-1}$, but for sure not longer than the length of the pre-image of $\Gamma_i$, the edge between $\Omega_\ell$ and $\Omega_{\ell'}$.
In order to have $E_{\x_m}^{(\ell)}s \in V_h^{(\ell)}$ or, equivalently, $\wh E_{\x_m}^{(\ell)} s \in \wh V_h^{(\ell)}$, $\eta$ should correspond to a knot in the corresponding knot vector. Since we have restricted ourselves to the case that the grids on each patch are uniform (\thref{ass:uniform}), this is the case since $\eta\in(0, 1]$ is an integer multiple of $\wh h_\ell$.

For the other adjacent patch, the extension operator $E_{\x_m}^{(\ell')}:\RR \to V_h^{(\ell')}$ is defined by
$E_{\x_m}^{(\ell')}s = (\wh E_{\x_m}^{(\ell')} s) \circ G_{\ell'}^{-1}$ with
\begin{equation}\label{eq:e:x:def2}
        (\wh E_{\x_m}^{(\ell')}\;s)(\wh x, \wh y)
        :=
        \psi_{\eta}(\wh x)\psi_{\eta}(\wh y)
        s,
\end{equation}
where $\eta$ is chosen such that 
\begin{equation*}
    E_{\x_m}^{(\ell')}s\big|_{\Gamma_i}=E_{\x_m}^{(\ell)}s\big|_{\Gamma_i}.
\end{equation*}
The statement~\eqref{eq:tcorr:edgcond} and \thref{ass:uniform} guarantee that $E_{\x_m}^{(\ell')}s \in V_h^{(\ell')}$. See Figure~\ref{fig:ext-t} for a visualization. 
The extension operators can be bounded from above as follows.
\begin{Lemma}\label{lem:e:x:bound}
    Let $\Omega_k$ be a patch and the T-junction $\x_m$ be one of its corners $(m\in\TT\cap\CC(\Omega_k))$. Then, we have
    \begin{equation}\nonumber
        h_{\kk(\x_m)}^{-2} \| E_{\x_m}^{(k)}s \|^2_{L^2(\Omega_k)}
        +
       | E_{\x_m}^{(k)}s |^2_{H^1(\Omega_k)}
       \lesssim |s|^2
       \quad \mbox{for all}\quad s \in \RR.
    \end{equation}
\end{Lemma}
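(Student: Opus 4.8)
The plan is to transport everything to the parameter domain, where $\wh E_{\x_m}^{(k)}s$ is the pure tensor product $\psi_\eta(\wh x)\psi_\eta(\wh y)\,s$, and to reduce both norms to univariate quantities controlled by \thref{lem:phinorm}. First I would invoke \thref{lem:map} with $r=1$ (noting $E_{\x_m}^{(k)}s\circ G_k = \wh E_{\x_m}^{(k)}s$) to write
\[
    \|E_{\x_m}^{(k)}s\|_{L^2(\Omega_k)}^2 \eqsim H_k^2\,\|\wh E_{\x_m}^{(k)}s\|_{L^2(\wh\Omega)}^2
    \quad\text{and}\quad
    |E_{\x_m}^{(k)}s|_{H^1(\Omega_k)}^2 \eqsim |\wh E_{\x_m}^{(k)}s|_{H^1(\wh\Omega)}^2,
\]
so that the geometry enters only through the factor $H_k^2$ in the $L^2$ term, while the $H^1$-seminorm is scale invariant.

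On the parameter domain the tensor-product structure factorizes all integrals, and \thref{lem:phinorm} gives
\[
    \|\wh E_{\x_m}^{(k)}s\|_{L^2(\wh\Omega)}^2 = |s|^2\,\|\psi_\eta\|_{L^2(0,1)}^4 \eqsim |s|^2\,p^{-2}\eta^2 ,
\]
while, using $\partial_{\wh x}(\psi_\eta(\wh x)\psi_\eta(\wh y)) = \psi_\eta'(\wh x)\psi_\eta(\wh y)$ and the symmetric identity in $\wh y$,
\[
    |\wh E_{\x_m}^{(k)}s|_{H^1(\wh\Omega)}^2 = 2\,|s|^2\,|\psi_\eta|_{H^1(0,1)}^2\,\|\psi_\eta\|_{L^2(0,1)}^2 \eqsim |s|^2\,(p\,\eta^{-1})(p^{-1}\eta) \eqsim |s|^2 .
\]
Here $\eta$ is a legitimate argument for \thref{lem:phinorm}, since \thref{ass:compatibility} forces the floor in~\eqref{eq:e:x:def} to be at least $p\ge 1$, whence $\eta\in[\wh h_k,1]\subseteq(0,1]$. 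Combining the two reductions, the $H^1$ part is already finished: $|E_{\x_m}^{(k)}s|_{H^1(\Omega_k)}^2 \eqsim |s|^2$.

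For the $L^2$ part I obtain $h_{\kk(\x_m)}^{-2}\|E_{\x_m}^{(k)}s\|_{L^2(\Omega_k)}^2 \eqsim p^{-2}\,h_{\kk(\x_m)}^{-2}\,(H_k\eta)^2\,|s|^2$, so it remains to bound the physical support width by $H_k\eta \lesssim p\,h_{\kk(\x_m)}$. Using $h_k = H_k\wh h_k$ and $\lceil t\rceil\le t+1$, the first entry of the minimum in~\eqref{eq:e:x:def} yields $H_k\eta \le p\,h_{\kk(\x_m)} + h_k$, so everything reduces to the single inequality $h_k\lesssim p\,h_{\kk(\x_m)}$.

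I expect this last inequality to be the only genuinely non-routine point. It holds because $\Omega_{\kk(\x_m)}$ is, by the refinement construction, the coarser patch carrying the T-junction in the interior of one of its edges, whereas $\Omega_k$ is one of the two adjacent patches having $\x_m$ as a corner; hence $h_k\le h_{\kk(\x_m)}$, and $p\ge 1$ closes the estimate. One additional subtlety is that for the patch $\Omega_{\ell'}$ the parameter $\eta$ is fixed only implicitly, through the matching condition~\eqref{eq:e:x:def2} rather than by the formula~\eqref{eq:e:x:def}. There I would argue that equality on $\Gamma_i$ together with \thref{ass:map} makes the physical support width $H_{\ell'}\eta_{\ell'}$ comparable to $H_\ell\eta_\ell$, so that the bound $H_k\eta\lesssim p\,h_{\kk(\x_m)}$ transfers to $k=\ell'$ as well, completing the argument.
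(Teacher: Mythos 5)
Your proposal is correct and follows essentially the same route as the paper's proof: pull back to $\wh\Omega$ via \thref{lem:map}, factor the tensor product through \thref{lem:phinorm}, bound $\eta$ using the ceiling in~\eqref{eq:e:x:def} together with $h_k\le h_{\kk(\x_m)}$, and transfer the $\ell'$ case through the edge-matching condition and \thref{lem:map-if}. The only (harmless) deviation is that you prove just the upper bound $H_k\eta\lesssim p\,h_{\kk(\x_m)}$, whereas the paper establishes the two-sided equivalence~\eqref{eq:etasim} because the lower bound is reused later in \thref{lem:e:x:edgebound}.
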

\begin{Proof}
    First, we estimate the size of $\eta$ from~\eqref{eq:e:x:def} or~\eqref{eq:e:x:def2}.
    
    Consider the case of~\eqref{eq:e:x:def} first, where $k=\ell$.    
    The estimate $H_\ell \wh h_\ell = h_\ell \le h_{\kk(\x_m)}$ guarantees that $\eta \le \wh h_\ell + p h_{\kk(\x_m)} H_{\ell}^{-1} \le 2p h_{\kk(\x_m)} H_{\ell}^{-1}$. \thref{ass:compatibility} guarantees $|\wh \Gamma_i^{(\ell)}|\ge p\wh h_\ell$, thus we have
    $\eta \ge \min\{ p h_{\kk(\x_m)} H_{\ell}^{-1}, \tfrac12 ||\wh \Gamma_i^{(\ell)}|| \}$. \thref{ass:nested} guarantees that $\Gamma_i$ is a full edge of $\Omega_{\ell'}$ and thus \thref{lem:map-if} and \thref{ass:compatibility} guarantee $|\wh \Gamma_i^{(\ell)}|\eqsim H_\ell^{-1}|\Gamma_i|\eqsim H_\ell^{-1} H_{\ell'}|\wh\Gamma_i^{(\ell')}|=H_\ell^{-1} H_{\ell'}\ge H_\ell^{-1} H_{\ell'} |\wh \Gamma_j^{(\ell')}|
    \eqsim H_\ell^{-1} |\Gamma_j| \gtrsim p h_{\kk(\x_m)} H_\ell^{-1}$, where $\Gamma_j$ is the edge between $\Omega_{\ell'}$ and $\Omega_{\kk(\x_m)}$. Therefore it holds for the case $k=\ell$ that 
    \begin{equation}\label{eq:etasim}
            \eta \eqsim p h_{\kk(\x_m)} H_{k}^{-1}.
    \end{equation}
    The other case~\eqref{eq:e:x:def2}, where $k=\ell'$, follows then directly 
    using \thref{lem:map-if}. 
    Using \eqref{eq:etasim} and \thref{lem:phinorm}, we obtain
    \begin{align*}
        \|\wh E_{\x_m}^{(\ell)} s\|^2_{L^2(\wh\Omega)}
        &= \|\psi^{(\ell)}_\eta\|^2_{L^2(0,1)}
        \|\psi^{(\ell)}_\eta\|^2_{L^2(0,1)}
        |s|^2
        \lesssim p^{-2} \eta^2 |s|^2
        \eqsim H_k^{-2} h_{\kk(\x_m)}^2 |s|^2,\\
        |\wh E_{\x_m}^{(\ell)} s|^2_{H^1(\wh\Omega)}
        &= |\psi^{(\ell)}_\eta|^2_{H^1(0,1)}
        \|\psi^{(\ell)}_\eta\|^2_{L^2(0,1)}
        |s|^2
        + \|\psi^{(\ell)}_\eta\|^2_{L^2(0,1)}
        |\psi^{(\ell)}_\eta|^2_{H^1(0,1)}
        |s|^2
        \lesssim |s|^2.
    \end{align*}
    We obtain the desired result using \thref{lem:map}.
\end{Proof}
We can also estimate the traces of the extension on the adjacent edges.
\begin{Lemma}\label{lem:e:x:edgebound}
    Let $\Omega_k$ be a patch, the T-junction $\x_m$ be one of its corners and $\Gamma_i$ an adjacent edge of $\Omega_k$ $(m\in \TT\cap \CC(\Omega_k)\cap \VV(\Gamma_i)$, $i\in \EE(\Omega_k))$. Then,
    \[
    h_{\kk(\x_m)}^{-1} \| E^{(k)}_{\x_m} s \|_{L^2(\Gamma_i)}^2
    + h_{\kk(\x_m)}       | E^{(k)}_{\x_m} s  |_{H^1(\Gamma_i)}^2
    \lesssim |s|^2 
    \quad\mbox{for all}\quad
    s\in \RR.
    \]
\end{Lemma}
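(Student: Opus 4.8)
The plan is to prove \thref{lem:e:x:edgebound} by reducing it to the parameter domain and then applying the norm bounds for $\psi_\eta$ already collected in \thref{lem:phinorm}, exactly as in the proof of the preceding \thref{lem:e:x:bound}. First I would pull back the edge $\Gamma_i$ to its preimage $\wh\Gamma_i^{(k)}=G_k^{-1}(\Gamma_i)$ and invoke \thref{lem:map-if} to convert the physical-domain norms into parameter-domain norms, picking up the factors $H_k$ on the $L^2$-norm and $H_k^{-1}$ on the $H^1$-seminorm. Since $\x_m$ is a corner of $\Omega_k$ lying on $\Gamma_i$, the preimage $\wh\Gamma_i^{(k)}$ is one of the four sides of $\wh\Omega$ incident to the corner $\wh\x_m^{(k)}=G_k^{-1}(\x_m)\in\{0,1\}^2$.

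Next I would use the tensor-product structure $(\wh E_{\x_m}^{(k)}s)(\wh x,\wh y)=\psi_\eta(\wh x)\,\psi_\eta(\wh y)\,s$ from \eqref{eq:e:x:def}/\eqref{eq:e:x:def2}. Restricting this product to a side through the corner, say $\wh\Gamma = (0,1)\times\{0\}$ (the other three incident sides are analogous, up to reflections), fixes one coordinate at the value $0$, where $\psi_\eta(0)=1$, leaving a univariate function of the form $\psi_\eta(\wh x)\,s$. Hence
\[
    \|\wh E_{\x_m}^{(k)}s\|_{L^2(\wh\Gamma)}^2 = \|\psi_\eta\|_{L^2(0,1)}^2\,|s|^2
    \quad\mbox{and}\quad
    |\wh E_{\x_m}^{(k)}s|_{H^1(\wh\Gamma)}^2 = |\psi_\eta|_{H^1(0,1)}^2\,|s|^2,
\]
so by \thref{lem:phinorm} these are $\eqsim p^{-1}\eta\,|s|^2$ and $\eqsim p\,\eta^{-1}\,|s|^2$, respectively.

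It then remains to insert the scaling relation $\eta\eqsim p\,h_{\kk(\x_m)}H_k^{-1}$ established as \eqref{eq:etasim} in the proof of \thref{lem:e:x:bound}, which is valid here by the same argument since the hypotheses coincide. Combining with the \thref{lem:map-if} factors gives
\[
    h_{\kk(\x_m)}^{-1}\|E_{\x_m}^{(k)}s\|_{L^2(\Gamma_i)}^2
    \eqsim h_{\kk(\x_m)}^{-1} H_k\,p^{-1}\eta\,|s|^2
    \eqsim h_{\kk(\x_m)}^{-1} H_k\,p^{-1} p\,h_{\kk(\x_m)}H_k^{-1}|s|^2 = |s|^2,
\]
and likewise for the $H^1$-seminorm term
\[
    h_{\kk(\x_m)}|E_{\x_m}^{(k)}s|_{H^1(\Gamma_i)}^2
    \eqsim h_{\kk(\x_m)} H_k^{-1}\,p\,\eta^{-1}|s|^2
    \eqsim h_{\kk(\x_m)} H_k^{-1}\,p\,p^{-1}h_{\kk(\x_m)}^{-1}H_k|s|^2 = |s|^2,
\]
so that both contributions are $\lesssim |s|^2$, as claimed. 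I expect no serious obstacle here: the result is essentially the edge-trace counterpart of \thref{lem:e:x:bound}, and the one point requiring care is the bookkeeping of the powers of $p$, $\eta$, $h_{\kk(\x_m)}$ and $H_k$ so that they cancel exactly. The only genuinely reusable input is the two-sided bound \eqref{eq:etasim} on $\eta$; I would simply cite it rather than reprove it, noting that the hypotheses of this lemma match those under which it was derived.
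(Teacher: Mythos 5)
Your proposal is correct and follows essentially the same route as the paper's own proof, which likewise combines \thref{lem:map-if}, the tensor-product restriction to the side together with \thref{lem:phinorm}, and the two-sided bound $\eta\eqsim p\,h_{\kk(\x_m)}H_k^{-1}$ from~\eqref{eq:etasim} to get the exact cancellation of the powers of $p$, $\eta$, $h_{\kk(\x_m)}$ and $H_k$. Your version is merely a bit more explicit about fixing one coordinate at the corner where $\psi_\eta(0)=1$ (and note that on a pre-image edge shorter than the unit interval your displayed equalities should be $\le$, which is all the $\lesssim$ conclusion needs).
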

\begin{proof}
    Using \thref{lem:map-if} and \thref{lem:phinorm} and~\eqref{eq:etasim}, we have
    \begin{align*}
        &h_{\kk(\x_m)}^{-1} \| E^{(k)}_{\x_m} s\|_{L^2(\Gamma_i)}^2
      + h_{\kk(\x_m)}       | E^{(k)}_{\x_m} s |_{H^1(\Gamma_i)}^2
      \lesssim
        h_{\kk(\x_m)}^{-1} H_k     \| \wh E^{(k)}_{\x_m} s\|_{L^2(\wh \Gamma_i^{(k)})}^2
      + h_{\kk(\x_m)}      H_k^{-1} | \wh E^{(k)}_{\x_m} s |_{H^1(\wh \Gamma_i^{(k)})}^2 
      \\& \qquad\lesssim (p^{-1} h_{\kk(\x_m)}^{-1} H_k\eta+p h_{\kk(\x_m)} H_k^{-1}\eta^{-1}) |s|^2\lesssim |s|^2,
    \end{align*}
    where $\eta$ is as in~\eqref{eq:e:x:def} or \eqref{eq:e:x:def2}, which shows the desired result.
\end{proof}
Using $\psi_\eta(t)=0$ for $t\ge \eta$, \eqref{eq:e:x:def} and \thref{ass:compatibility}, we obtain
\begin{Lemma}\label{lem:e:x:no:interference}
    Let $\Omega_k$ be a patch with corner $\x_m$, i.e., with $m\in \CC(\Omega_k)$. Then
    \[
            \big(E^{(k)}_{\x_m}s\big)(\x_n)=0
            \quad\mbox{holds for all}\quad n \in \VV(\Omega_k).
    \]
\end{Lemma}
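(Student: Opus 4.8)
The plan is to work entirely on the parameter domain and to exploit that $\psi_\eta$ vanishes outside $[0,\eta)$. After possibly reflecting the parameter coordinates, I may assume that the pull-back of the corner in question is the origin, i.e.\ $\wh\x_m^{(k)}=(0,0)$. With this normalization, \eqref{eq:e:x:def} (resp.~\eqref{eq:e:x:def2}) shows that the pulled-back extension $\wh E_{\x_m}^{(k)}s=\psi_\eta(\wh x)\,\psi_\eta(\wh y)\,s$ is supported in the square $[0,\eta)\times[0,\eta)$, because $\psi_\eta(t)=0$ for $t\ge\eta$. Hence $(E_{\x_m}^{(k)}s)(\x_n)=(\wh E_{\x_m}^{(k)}s)(\wh\x_n^{(k)})$ vanishes as soon as the pull-back $\wh\x_n^{(k)}=G_k^{-1}(\x_n)$ has at least one coordinate $\ge\eta$. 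It therefore suffices to verify this for every $\x_n$ with $n\in\VV(\Omega_k)\setminus\{m\}$; at $\x_m$ itself the value equals $s$, which is precisely the correction we want, so the content of the lemma is the vanishing at all remaining vertices.

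First I would dispose of the easy vertices. Every $\x_n$ that is a \emph{corner} of $\Omega_k$ has $\wh\x_n^{(k)}\in\{0,1\}^2\setminus\{(0,0)\}$, so one of its coordinates equals $1\ge\eta$ (recall $\eta\in(0,1]$) and $\psi_\eta$ annihilates it. Likewise, any T-junction $\x_n\in\TT(\Omega_k)$ lying on one of the two sides of $\wh\Omega$ \emph{not} incident to the origin has a coordinate equal to $1$, so again the product is zero. This reduces the claim to the T-junctions lying on the two sides of $\Omega_k$ that meet at $\x_m$.

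The crux is thus to show that along each of these two incident sides the nearest other vertex sits at parameter distance at least $\eta$ from the origin; then its pull-back has the coordinate along that side $\ge\eta$ and $\psi_\eta$ vanishes. One side carries the edge $\Gamma_i$ between the two fine patches $\Omega_\ell$ and $\Omega_{\ell'}$; since $\Gamma_i$ contains no vertex in its interior, the nearest vertex along this side is the far endpoint of $\Gamma_i$, at parameter distance $|\wh\Gamma_i^{(k)}|$. In the case~\eqref{eq:e:x:def} (where $k=\ell$) the $\min$ defining $\eta$ gives $\eta\le\lfloor|\wh\Gamma_i^{(k)}|/\wh h_k\rfloor\,\wh h_k\le|\wh\Gamma_i^{(k)}|$, whereas in the case~\eqref{eq:e:x:def2} (where $k=\ell'$) the edge $\Gamma_i$ is a full side, so $|\wh\Gamma_i^{(k)}|=1\ge\eta$. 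The other incident side is shared with the coarse patch $\Omega_{\kk(\x_m)}$: because $\x_m$ is a T-junction it lies in the \emph{interior} of an edge of $\Omega_{\kk(\x_m)}$, so this shared segment is not a full edge of $\Omega_{\kk(\x_m)}$ and, by \thref{ass:matching}(c), must be a full edge—hence a full side—of $\Omega_k$, whose far endpoint is at parameter distance $1\ge\eta$. In either case the pull-back of every remaining vertex has its coordinate along the corresponding side $\ge\eta$, so $\psi_\eta$ annihilates it and the claim follows.

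The only genuinely geometric step, and the one I expect to carry the structural assumptions, is this last paragraph: identifying the two sides incident to $\x_m$ as the edge $\Gamma_i$ (controlled by the $\min$ built into $\eta$, via \thref{ass:compatibility}) and a full side of $\Omega_k$ (obtained from \thref{ass:matching} together with \thref{ass:simple:t}, using that $\x_m$ lies interior to an edge of the coarse patch). Everything else reduces to the elementary fact that $\psi_\eta$ has support length $\eta$, so the entire burden is to guarantee that $\eta$ does not overshoot either incident edge.
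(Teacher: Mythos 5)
Your proof is correct and follows essentially the same route as the paper, whose entire argument is the one-line remark preceding the lemma (the support property $\psi_\eta(t)=0$ for $t\ge\eta$, the cap on $\eta$ built into~\eqref{eq:e:x:def}, and \thref{ass:compatibility}); you simply carry that argument out in detail, supplying the structural facts from Assumptions~\ref{ass:matching} and~\ref{ass:simple:t} that keep $\eta$ from overshooting the two sides incident to $\x_m$. Your observation that the claim can only be meant for $n\ne m$ (since $\big(E^{(k)}_{\x_m}s\big)(\x_m)=s$) is also correct --- the paper indeed uses the lemma only in that form.
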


Additionally, we need to ensure continuity across edges. For every edge $\Gamma_i$ between two patches, let $\kk(\Gamma_i)$ refer to the adjacent patch with the smaller trace space (if the trace spaces agree, $\kk(\Gamma_i)$ refers to one of the adjacent patches), i.e., such that
\begin{equation}\label{eq:edge:ext:embedded}
     V_h^{(\kk(\Gamma_i))} \big|_{\Gamma_i} \subseteq  V_h^{(\ell)} \big|_{\Gamma_i}
    \quad\mbox{for all}\quad
    \ell \in \PP(\Gamma_i).
\end{equation}
Remember that $\wh \Gamma_i^{(\ell)} = G_\ell^{-1}(\Gamma_i)$ is the pull-back of $\Gamma_i$ to the adjacent patch $\Omega_\ell$ with $\ell$ as above. Using \thref{ass:nested}, we know that $\wh \Gamma_i^{(\ell)}$ is one of the four sides of $\wh \Omega$;
without loss of generality, we assume $\wh \Gamma_i^{(\ell)} = [0,1] \times \{0\}$, i.e. the interface mapped back to the parameter domain is fixed in the $y$-component. The extension $E^{(\ell)}_{\Gamma_i} : V_h^{(\ell)}|_{\Gamma_i} \to \wh V_h^{(\ell)}$ is given by
$(E_{\Gamma_i}^{(\ell)} u) \circ G_\ell := \wh E_{\Gamma_i}^{(\ell)}(u \circ G_\ell^{-1})$ with
\begin{equation}\label{eq:e:gamma:def}
        \left(\wh E^{(\ell)}_{\Gamma_i}\; \wh u\right)(\wh x, \wh y) 
        = \wh u(\wh x,0) \psi_{\eta}(\wh y),
        \quad\mbox{with}\quad
        \eta := 
        \left\lceil \tfrac{p h_{\kk(\Gamma_i)}}{h_\ell} \right\rceil
        \wh h_\ell.
\end{equation}
Analogous to the case of vertex based extensions, we can verify using~\eqref{eq:edge:ext:embedded} and \thref{ass:compatibility} that
$\eta \in (0,1]$ and
\begin{equation}\label{eq:etasim2}
     \eta \eqsim p h_{\kk(\Gamma_i)}H_\ell^{-1}
\end{equation}
and that this operator indeed maps into $\wh V_h^{(\ell)}$, see Figure~\ref{fig:ext-e} for a visualization.
\begin{figure}[ht]
    \centering
            \begin{tikzpicture}
    \node[anchor=south west,inner sep=0] (image) at (0,0) {\includegraphics[width=.45\textwidth]{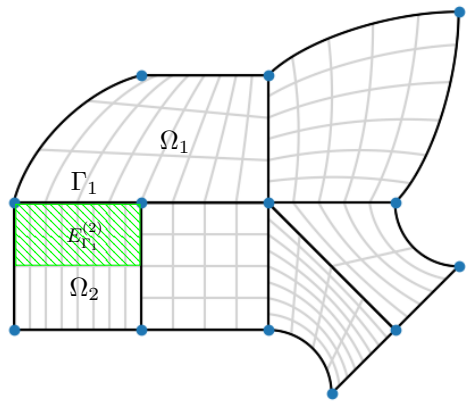}};
    \draw [fill=green,green, pattern = north west lines, pattern color =green] (0.48,2.11) rectangle (2.13,2.92); 
    \node[scale = 0.75] at (1.4,2.5) {$E_{\Gamma_1}^{(2)}$};
    \node at  (2.6,3.75) {$\Omega_1$};
    \node at  (1.4,1.8) {$\Omega_2$};
    \node at  (1.4,3.2) {$\Gamma_1$};
    \end{tikzpicture}
    \caption{\label{fig:ext-e} The edge extensions (support in green) extend into the patch with the finer grid, here $\Omega_2$; their depth is adjusted to the grid size of the patch with the coarser grid, here $\Omega_1$, still aligned with the grid on which it is defined. The extension vanishes on all corners and on all other edges.}
\end{figure}

Using this estimate and \thref{lem:map-if}, we obtain the following lemma.
\begin{Lemma}
    Let $\Omega_k$ be a patch and $\Gamma_i$ an adjacent edge $(i \in \EE(\Omega_k))$. Then, we have
    \begin{equation*}
        \|E^{(k)}_{\Gamma_i}u\|^2_{H^1(\Omega_k)}
        \lesssim h_{\kk(\Gamma_i)}^{-1}\|u\|^2_{L^2(\Gamma_i)}
        +h_{\kk(\Gamma_i)} |u|^2_{H^1(\Gamma_i)}
        \quad \mbox{for all}\quad u\in H^1_0(\Gamma_i).
    \end{equation*}
\end{Lemma}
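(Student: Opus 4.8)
The plan is to push everything to the parameter domain, where the extension is a pure tensor product, and then to carry the resulting norms back through the two pull-back lemmas: \thref{lem:map} for the two-dimensional patch $\Omega_k$ and \thref{lem:map-if} for the one-dimensional edge $\Gamma_i$. Writing $\wh u := u\circ G_k$ and abbreviating its (pulled-back) trace by $g(\wh x):=\wh u(\wh x,0)$ on $\wh \Gamma_i^{(k)}=(0,1)\times\{0\}$, the definition~\eqref{eq:e:gamma:def} reads $(\wh E_{\Gamma_i}^{(k)}\wh u)(\wh x,\wh y)=g(\wh x)\,\psi_\eta(\wh y)$. Because this is a tensor product, the $L^2$-norm and the two partial derivatives separate, and one finds
\begin{align*}
    \|\wh E_{\Gamma_i}^{(k)}\wh u\|_{L^2(\wh\Omega)}^2 &= \|g\|_{L^2(0,1)}^2\,\|\psi_\eta\|_{L^2(0,1)}^2, \\
    |\wh E_{\Gamma_i}^{(k)}\wh u|_{H^1(\wh\Omega)}^2 &= |g|_{H^1(0,1)}^2\,\|\psi_\eta\|_{L^2(0,1)}^2 + \|g\|_{L^2(0,1)}^2\,|\psi_\eta|_{H^1(0,1)}^2 .
\end{align*}

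Next I would insert the norms of $\psi_\eta$ from \thref{lem:phinorm}, namely $\|\psi_\eta\|_{L^2(0,1)}^2\eqsim p^{-1}\eta$ and $|\psi_\eta|_{H^1(0,1)}^2\eqsim p\,\eta^{-1}$, together with the scaling $\eta\eqsim p\,h_{\kk(\Gamma_i)}H_k^{-1}$ from~\eqref{eq:etasim2}; the factors of $p$ cancel, leaving $\|\psi_\eta\|_{L^2}^2\eqsim h_{\kk(\Gamma_i)}H_k^{-1}$ and $|\psi_\eta|_{H^1}^2\eqsim h_{\kk(\Gamma_i)}^{-1}H_k$. Transferring to the physical patch by \thref{lem:map} with $r=1$ (so that the $L^2$-part carries a factor $H_k^2$ and the $H^1$-seminorm part a factor $H_k^0$) gives
\[
    \|E_{\Gamma_i}^{(k)}u\|_{H^1(\Omega_k)}^2 \eqsim H_k^2\,\|\wh E_{\Gamma_i}^{(k)}\wh u\|_{L^2(\wh\Omega)}^2 + |\wh E_{\Gamma_i}^{(k)}\wh u|_{H^1(\wh\Omega)}^2 ,
\]
so that, after substitution, the right-hand side reduces to a sum of three terms proportional to $H_k\,h_{\kk(\Gamma_i)}\|g\|_{L^2}^2$, $\;h_{\kk(\Gamma_i)}H_k^{-1}|g|_{H^1}^2$ and $\;h_{\kk(\Gamma_i)}^{-1}H_k\|g\|_{L^2}^2$.

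Finally I would convert the parameter-domain trace norms of $g$ back to the edge $\Gamma_i$ by \thref{lem:map-if}, i.e.\ $\|g\|_{L^2(0,1)}^2\eqsim H_k^{-1}\|u\|_{L^2(\Gamma_i)}^2$ and $|g|_{H^1(0,1)}^2\eqsim H_k\,|u|_{H^1(\Gamma_i)}^2$. With these substitutions every power of $H_k$ cancels, and the three terms become $h_{\kk(\Gamma_i)}\|u\|_{L^2(\Gamma_i)}^2$, $\;h_{\kk(\Gamma_i)}|u|_{H^1(\Gamma_i)}^2$ and $\;h_{\kk(\Gamma_i)}^{-1}\|u\|_{L^2(\Gamma_i)}^2$. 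The last two already match the asserted bound; the first is absorbed into the third using that grid sizes satisfy $h_{\kk(\Gamma_i)}\lesssim \mathrm{diam}\,\Omega\eqsim 1$, whence $h_{\kk(\Gamma_i)}\le h_{\kk(\Gamma_i)}^{-1}$.

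The computation is otherwise routine tensor-product bookkeeping, and I expect the genuine difficulty to lie entirely in tracking the geometric scalings: one must check that the $H_k$-powers coming from the two-dimensional pull-back (\thref{lem:map}) cancel exactly against those from the one-dimensional edge pull-back (\thref{lem:map-if}), and that the $p$-dependence introduced by \thref{lem:phinorm} is precisely annihilated by the choice $\eta\eqsim p\,h_{\kk(\Gamma_i)}H_k^{-1}$. Getting these exponents right is what yields the clean, $p$- and $H_k$-independent powers $h_{\kk(\Gamma_i)}^{\pm1}$ in the final estimate.
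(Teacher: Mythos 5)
Your proposal is correct and follows essentially the same route as the paper's proof: tensor-product splitting of the extension on the parameter domain, inserting \thref{lem:phinorm} together with $\eta \eqsim p\,h_{\kk(\Gamma_i)}H_k^{-1}$ from~\eqref{eq:etasim2}, pulling back via \thref{lem:map} and \thref{lem:map-if}, and absorbing the harmless $h_{\kk(\Gamma_i)}\|u\|_{L^2(\Gamma_i)}^2$ term using $h_{\kk(\Gamma_i)}\lesssim 1$. All exponent bookkeeping in your computation matches the paper's.
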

\begin{Proof}
     Due to the tensor product structure of the extension operator, we are able to decompose the $L^2$-norm and the $H^1$-seminorm into their axial components, i.e., we obtain using the chain rule and \thref{lem:phinorm}
    \begin{align*}
        |\wh E^{(k)}_{\Gamma_i}(\wh u)|_{H^1(\wh\Omega)}^2 
        &= |\wh u|_{H^1(\wh\Gamma_i^{(k)})}^2\|\psi_{\eta}\|_{L^2(0,1)}^2
        + \|\wh u\|_{L^2(\wh\Gamma_i^{(k)})}^2|\psi_{\eta}|_{H^1(0,1)}^2
        \lesssim p^{-1} \eta |\wh u|^2_{H^1(\wh\Gamma_i^{(k)})} 
        + p \eta^{-1} \|\wh u\|^2_{L^2(\wh\Gamma_i^{(k)})}
        \\
        \|\wh E^{(k)}_{\Gamma_i}(\wh u)\|^2_{L^2(\wh\Omega)} 
        &= \|\wh u\|^2_{L^2(\wh\Gamma_i^{(k)})} \|\psi_{\eta}\|^2_{L^2(0,1)}
        \lesssim p^{-1} \eta  \|\wh u\|^2_{L^2(\wh\Gamma_i^{(k)})},
    \end{align*}
    where $\eta$ is as in~\eqref{eq:e:gamma:def}.
    Using~\eqref{eq:etasim2} and Lemmas~\ref{lem:map} and~\ref{lem:map-if}, we have
    \begin{align*}
        &\|E^{(k)}_{\Gamma_i} u\|^2_{H^1(\Omega_k)} 
        \lesssim
        H_k^{2} \|\wh E^{(k)}_{\Gamma_i}\wh u\|^2_{L^2(\wh\Omega)}
        + |\wh E^{(k)}_{\Gamma_i}\wh u|^2_{H^1(\wh\Omega)}
        \lesssim (p^{-1} H_k^2 \eta+ p\eta^{-1}) \|\wh u\|^2_{L^2(\wh \Gamma_i^{(k)})}
        + p^{-1}\eta |\wh u|^2_{H^1(\wh \Gamma_i^{(k)})} 
        \\&
        \lesssim (p^{-1} H_k \eta+ p H_k^{-1} \eta^{-1})  \|u\|^2_{L^2(\Gamma_i)}
        + p^{-1} H_k \eta  |u|^2_{H^1(\Gamma_i)}
        \lesssim (h_{\kk(\Gamma_i)} + h_{\kk(\Gamma_i)}^{-1})  \|u\|^2_{L^2(\Gamma_i)}
        +  h_{\kk(\Gamma_i)}  |u|^2_{H^1(\Gamma_i)},
    \end{align*}
    which implies the desired result since $h_{\kk(\Gamma_i)} \le H_{\kk(\Gamma_i)} \le \mbox{diam }\Omega\lesssim 1$.
\end{Proof}

Since $u\in H^1_0(\Gamma_i)$ vanishes on the two adjacent vertices and since $\eta$ is chosen such that $\psi_\eta(1)=0$, we immediately have
\begin{Lemma}\label{lem:e:gamma:no:interference}
    Let $\Omega_k$ be a patch and let $\Gamma_i$ be one of its edges, i.e., $i \in \EE(\Omega_k)$. Then, $E^{(k)}_{\Gamma_i}u$ vanishes on all other edges and on all vertices of $\Omega_k$, i.e., 
    \[
            \big(E^{(k)}_{\Gamma_i}u\big)\big|_{\Gamma_j}=0
            \quad\mbox{and}\quad
            \big(E^{(k)}_{\Gamma_i}u\big)(\x_m)=0
            \quad\mbox{for all } u \in H^1_0(\Gamma_i),\;
            j \in \EE(\Omega_k)\setminus\{i\}
            \mbox{ and }m\in \VV(\Omega_k).
    \]
\end{Lemma}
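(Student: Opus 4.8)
The plan is to carry out the entire argument in the parameter domain of $\Omega_k$, exploiting the tensor-product form of the extension in~\eqref{eq:e:gamma:def}. By \thref{ass:nested} and the construction, the pre-image $\wh\Gamma_i^{(k)}$ is a full side of $\wh\Omega$; as in the definition, I assume without loss of generality $\wh\Gamma_i^{(k)}=[0,1]\times\{0\}$, so that $(E^{(k)}_{\Gamma_i}u)\circ G_k$ equals $\wh u(\wh x,0)\,\psi_{\eta}(\wh y)$ on $\wh\Omega$, where $\wh u$ denotes the pull-back of $u$ to $\wh\Gamma_i^{(k)}$ and $\eta\in(0,1]$. Since $G_k$ maps the four sides of $\wh\Omega$ onto the four sides of $\partial\Omega_k$ and the corners onto the corners, it suffices to check the vanishing of this parameter-domain function on the three sides other than the bottom and at the two points bounding the bottom side.

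First I would treat the vertices. Every vertex of $\Omega_k$ with index $m\in\VV(\Omega_k)$ pulls back to a point $\wh\x_m^{(k)}\in\partial\wh\Omega$. If $\wh\x_m^{(k)}$ lies on the top, left or right side, its vanishing follows from the side-wise computation below; the only points needing separate treatment are the corners $(0,0)$ and $(1,0)$, which are exactly the pull-backs of the two vertices $m\in\VV(\Gamma_i)$. There the value equals $\wh u(0,0)\psi_{\eta}(0)=\wh u(0,0)$ and $\wh u(1,0)\psi_{\eta}(0)=\wh u(1,0)$; since $u\in H^1_0(\Gamma_i)$ vanishes at the two endpoints of $\Gamma_i$, both are zero. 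Because $\wh\Gamma_i^{(k)}$ is the entire bottom side, no vertex of $\Omega_k$ lies in its relative interior, so these are the only vertices pulling back to that side.

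Next I would treat the three remaining sides, which simultaneously covers every other edge $\Gamma_j$ with $j\in\EE(\Omega_k)\setminus\{i\}$, as each such edge pulls back into the top, left or right side of $\wh\Omega$. On the top side $\wh y=1$ the factor $\psi_{\eta}(1)$ vanishes, since $\eta\in(0,1]$ and $\psi_{\eta}(t)=0$ for $t\ge\eta$. On the left side $\wh x=0$ the function equals $\wh u(0,0)\,\psi_{\eta}(\wh y)=0$, and on the right side $\wh x=1$ it equals $\wh u(1,0)\,\psi_{\eta}(\wh y)=0$, again using $\wh u(0,0)=\wh u(1,0)=0$. Thus the extension vanishes identically on these three sides, which yields both $(E^{(k)}_{\Gamma_i}u)|_{\Gamma_j}=0$ and the vanishing at every vertex of $\Omega_k$ whose pull-back lies off $\wh\Gamma_i^{(k)}$.

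The argument is a direct evaluation, so there is no deep obstacle; the only point requiring care is the geometric bookkeeping. Specifically, I must confirm that $\wh\Gamma_i^{(k)}$ is genuinely a full side of $\wh\Omega$ (so that every other edge and every vertex of $\Omega_k$ maps into the three sides where the factor $\psi_{\eta}$ or the boundary value of $\wh u$ forces vanishing, or onto the two endpoints of $\Gamma_i$), and I must account for T-junctions of $\Omega_k$, which sit in the relative interior of a side but are nonetheless covered by the side-wise vanishing established above.
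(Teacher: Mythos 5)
Your proof is correct and follows essentially the same route as the paper, which disposes of the lemma in a single sentence: the factor $\psi_{\eta}$ satisfies $\psi_{\eta}(1)=0$ because $\eta\in(0,1]$, and $u\in H^1_0(\Gamma_i)$ vanishes at the two vertices enclosing $\Gamma_i$, so the tensor-product extension vanishes on the three remaining sides of $\wh\Omega$ and at all vertices. Your write-up merely makes explicit the side-by-side and corner-by-corner bookkeeping (including T-junctions in the relative interiors of sides and the fact that $\wh\Gamma_i^{(k)}$ is a full side, which the paper fixes without loss of generality in the definition~\eqref{eq:e:gamma:def}) that the paper leaves implicit.
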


\subsection{Overall error estimate}

To obtain the overall error estimate (\thref{thrm:main-ho}), assume that $u$ to a arbitrary but fixed smooth function $\Omega \to \RR$. In the following, we construct a function $u_h \in V_h$ such that the $H^1$-error satisfies the corresponding result. First, define $u_{0,h}$ by patch-wise interpolation:
\begin{equation*}
    u_{0,h} \in L^2(\Omega)
    \quad \mbox{such that} \quad
    u_{0,h}\big|_{\Omega_k} := u_{0,h}^{(k)} := \Pi_h^{(k)} (u|_{\Omega_k}).
\end{equation*}
Using \eqref{eq:QI_err:h1}, we obtain
\begin{equation}\label{eq:u0estim}
    \sum_{k=1}^K \|u - u_{0,h} \|_{H^1(\Omega_k)}^2     
    \lesssim \sum_{k=1}^K h_k^{2q_k} \|u\|_{H^{1+q_k}(\Omega_k)}^2.
\end{equation} 

Following~\eqref{eq:t18:corner}, we know that $u_{0,h}^{(k)}(\x_m) = u(\x_m) = u_{0,h}^{(\ell)}(\x_m)$ if $\x_m$ is a common corner of $\Omega_k$ and $\Omega_\ell$ ($m\in\CC(\Omega_k)\cap\CC(\Omega_\ell)$), i.e., that $u_{0,h}$ is continuous across the corners. However, for $m\in \TT(\Omega_k)$, we have $u_{0,h}^{(k)}(\x_m) \ne u(\x_m)$ in general. In order to guarantee continuity at T-junctions as well, we define a corrected version of $u_{0,h}$ as follows:
\begin{equation}\label{eq:u1h-ho-def}
    u_{1,h} \in L^2(\Omega)
    \quad \mbox{such that} \quad
    u_{1,h}\big|_{\Omega_k} := u_{1,h}^{(k)} := u_{0,h}^{(k)} + \sum_{m\in \CC(\Omega_k) \cap \TT}
                    E_{\x_m}^{(k)} (u_{0,h}^{(\kk(\x_m))}(\x_m)-u_{0,h}^{(k)}(\x_m)).
\end{equation}
The function $u_{1,h}$ is continuous at the vertices and satisfies the same error estimate as in~\eqref{eq:u0estim}.
\begin{Lemma}
    For any two patches $\Omega_k$ and $\Omega_\ell$ with common vertex $\x_m$ $(m\in \VV(\Omega_k)\cap \VV(\Omega_\ell))$, we have $u_{1,h}^{(k)}(\x_m)=u_{1,h}^{(\ell)}(\x_m)$.
\end{Lemma}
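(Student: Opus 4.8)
The plan is to evaluate $u_{1,h}^{(k)}(\x_m)$ directly from the definition~\eqref{eq:u1h-ho-def} and to organize the argument according to whether $\x_m$ is a corner vertex or a T-junction. First I would split the correction sum $\sum_{m'\in \CC(\Omega_k)\cap\TT}$ appearing in~\eqref{eq:u1h-ho-def} into the (at most one) \emph{home} term with $m'=m$ and the remaining \emph{cross} terms with $m'\ne m$. For every cross term, \thref{lem:e:x:no:interference} applied with home corner $\x_{m'}$ and evaluation vertex $\x_m$ (note $m\in\VV(\Omega_k)$ and $m\ne m'$) gives $\big(E_{\x_{m'}}^{(k)}(\,\cdot\,)\big)(\x_m)=0$, so cross terms never contribute at $\x_m$. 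For the home term (present exactly when $m\in\CC(\Omega_k)\cap\TT$), I would instead evaluate the extension at its \emph{own} corner: since $\psi_\eta(0)=1$ and, after relabelling the parameter directions so that $\x_m$ sits at the parameter corner $(0,0)$, one has $\x_m=G_k(0,0)$, the definition~\eqref{eq:e:x:def}/\eqref{eq:e:x:def2} yields $\big(E_{\x_m}^{(k)}s\big)(\x_m)=\psi_\eta(0)^2\,s=s$ with $s=u_{0,h}^{(\kk(\x_m))}(\x_m)-u_{0,h}^{(k)}(\x_m)$, so that the home term turns the base value $u_{0,h}^{(k)}(\x_m)$ into $u_{0,h}^{(\kk(\x_m))}(\x_m)$.

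Combining these two observations I obtain the compact evaluation rule
\[
    u_{1,h}^{(k)}(\x_m)=
    \begin{cases}
        u_{0,h}^{(\kk(\x_m))}(\x_m) & \text{if } m\in\CC(\Omega_k)\cap\TT,\\
        u_{0,h}^{(k)}(\x_m) & \text{otherwise,}
    \end{cases}
\]
and symmetrically for $\Omega_\ell$. It then remains to compare the two sides in the two geometric cases. If $\x_m$ is a corner vertex ($m\in\CC=\VV\setminus\TT$), then $m\in\CC(\Omega_k)\cap\CC(\Omega_\ell)$ and neither index lies in $\TT$, so both evaluations fall into the ``otherwise'' branch; the corner interpolation property~\eqref{eq:t18:corner} of $\wh\Pi_h^{(k)}$ then gives $u_{0,h}^{(k)}(\x_m)=u(\x_m)=u_{0,h}^{(\ell)}(\x_m)$, which closes this case.

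For the remaining case, $\x_m\in\TT$, I would invoke \thref{ass:simple:t} to fix the local picture: exactly three patches meet at $\x_m$, namely the ``master'' patch $\Omega_{\kk(\x_m)}$ with $m\in\TT(\Omega_{\kk(\x_m)})$ and two ``corner'' patches with $m\in\CC\cap\TT$. For the master patch the home term is absent ($m\notin\CC(\Omega_{\kk(\x_m)})$), so the rule gives $u_{1,h}^{(\kk(\x_m))}(\x_m)=u_{0,h}^{(\kk(\x_m))}(\x_m)$; for each corner patch the home term is present and the rule gives exactly the same number $u_{0,h}^{(\kk(\x_m))}(\x_m)$. Hence all three adjacent patches evaluate to the single value $u_{0,h}^{(\kk(\x_m))}(\x_m)$ at $\x_m$, so in particular $u_{1,h}^{(k)}(\x_m)=u_{1,h}^{(\ell)}(\x_m)$ for any two of them.

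I expect the only delicate point to be the evaluation of an extension at its own corner, $\big(E_{\x_m}^{(k)}s\big)(\x_m)=s$: this is the mechanism that actually transfers the master value onto the corner patches, and it is precisely the configuration that \thref{lem:e:x:no:interference} deliberately does \emph{not} govern (that lemma concerns the remaining vertices). Everything else is bookkeeping --- carefully distinguishing the master patch from the corner patches at a T-junction via \thref{ass:simple:t}, and ensuring the cross terms are annihilated by \thref{lem:e:x:no:interference} so that only the home term and the uncorrected interpolant survive.
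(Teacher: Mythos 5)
Your proposal is correct and follows essentially the same route as the paper's proof: you split off the vertex corrections, annihilate the cross terms via \thref{lem:e:x:no:interference}, let the home term transfer the master value $u_{0,h}^{(\kk(\x_m))}(\x_m)$ onto the corner patches, and settle the corner-vertex case with the interpolation property~\eqref{eq:t18:corner}. Your explicit computation $\big(E_{\x_m}^{(k)}s\big)(\x_m)=\psi_\eta(0)^2 s=s$ merely spells out what the paper compresses into the phrase ``the effects of extension,'' and your reading of \thref{lem:e:x:no:interference} as governing only the \emph{other} vertices $\x_n$, $n\ne m$, matches its intended use in the paper.
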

\begin{proof}
    We have $u_{1,h}^{(k)}(\x_m)=u_{0,h}^{(\kk(\x_m))}(\x_m)$ for all $k \in\PP(\x_m)$, due to the following observations. 

    If $m\in \CC$, using \thref{lem:e:x:no:interference} and~\eqref{eq:t18:corner} we have that $u_{1,h}^{(k)}(\x_m)=u_{0,h}^{(k)}(\x_m)=u(\x_m)$ for all $k\in \PP(\x_m)$, including $\kk(\x_m)$.
    Now, consider the case $m \in \TT$.
    If $k$ is such that $m\in \CC(\Omega_k)$, we have using \thref{lem:e:x:no:interference} and the effects of extension that $u_{1,h}^{(k)}(\x_m) = u_{0,h}^{(\kk(\x_m))}(\x_m)$.
    If $k$ is such that $m\in \TT(\Omega_k)$, using \thref{lem:e:x:no:interference} and $k=\kk(\x_m)$, we have that $u_{1,h}^{(k)}(\x_m) = u_{0,h}^{(\kk(\x_m))}(\x_m)$. The last two statements guarantee continuity at $\x_m$ if it is a T-junction.
\end{proof}
\begin{Lemma}\label{lem:u1estim}
    Let $u_{1,h}$ be defined as in~\eqref{eq:u1h-ho-def}. Then, the following estimate holds:   
    \[
        \sum_{k=1}^K \| u - u_{1,h} \|_{H^1(\Omega_k)}^2    
        \lesssim \sum_{k=1}^K h_k^{2q_k} \|u\|_{H^{1+q_k}(\Omega_k)}^2.
    \]
\end{Lemma}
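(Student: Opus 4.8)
The plan is to exploit that $u_{1,h}$ differs from the already-controlled interpolant $u_{0,h}$ only through the vertex corrections in~\eqref{eq:u1h-ho-def}, so that by the triangle inequality it suffices to bound the correction. Writing $s_m := u_{0,h}^{(\kk(\x_m))}(\x_m)-u_{0,h}^{(k)}(\x_m)$, I would split
\[
\|u-u_{1,h}\|_{H^1(\Omega_k)} \le \|u-u_{0,h}\|_{H^1(\Omega_k)} + \Big\|\sum_{m\in\CC(\Omega_k)\cap\TT} E_{\x_m}^{(k)} s_m\Big\|_{H^1(\Omega_k)},
\]
control the first summand by~\eqref{eq:u0estim}, and treat the second termwise. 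Since a patch has only four corners, the inner sum has at most four terms, so after squaring and summing I pick up only a harmless bounded factor, reducing everything to an estimate for a single term. By \thref{lem:e:x:bound} one has $\|E_{\x_m}^{(k)} s_m\|_{H^1(\Omega_k)}^2\lesssim |s_m|^2$, so the whole problem collapses to bounding the scalar jump $|s_m|$.

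Next I would bound $|s_m|$. The crucial observation is that $\x_m$ is a corner of $\Omega_k$ (because $m\in\CC(\Omega_k)$), so the corner-interpolation property~\eqref{eq:t18:corner} gives $u_{0,h}^{(k)}(\x_m)=u(\x_m)$; hence $s_m = (\Pi_h^{(\kk(\x_m))}u - u)(\x_m)$ is simply the patch-local interpolation error of $\Omega_{\kk(\x_m)}$ evaluated at $\x_m$, which for that patch is a genuine T-junction lying in the interior of an edge. Passing to the parameter domain via $\wh w := (u-\Pi_h^{(\kk(\x_m))}u)\circ G_{\kk(\x_m)}$ and writing $j := \kk(\x_m)$, I apply the second (two-dimensional) vertex trace inequality of \thref{lem:trace:vtx} to $\wh w$ and insert the parametric approximation estimates~\eqref{eq:QI_err:param:l2}, \eqref{eq:QI_err:param:h1} and \eqref{eq:QI_err:param:h2}. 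The point is that the three resulting contributions balance,
\[
\wh h_j^{-2}\,\wh h_j^{2(1+q_j)} + \wh h_j^{2q_j} + \wh h_j^{2}\,\wh h_j^{2(q_j-1)} \eqsim \wh h_j^{2q_j},
\]
so that $|s_m|^2 \lesssim \wh h_j^{2q_j}|\wh u|_{H^{1+q_j}(\wh\Omega)}^2$, and \thref{lem:map} transports this to $|s_m|^2 \lesssim h_j^{2q_j}\|u\|_{H^{1+q_j}(\Omega_j)}^2$ (for non-integer $q_j$ via Hilbert-space interpolation, as after~\eqref{eq:QI_err:h2}). This step uses $q_j\ge 1$, so that $\wh u\in H^2(\wh\Omega)$ and the $H^2$-based trace inequality applies, which is exactly the regularity hypothesis of \thref{thrm:main-ho}.

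Finally I would reassemble. Summing the per-term bound over all patches and all of their T-junction corners gives
\[
\sum_{k=1}^K\|u_{1,h}-u_{0,h}\|_{H^1(\Omega_k)}^2 \lesssim \sum_{k=1}^K\sum_{m\in\CC(\Omega_k)\cap\TT} h_{\kk(\x_m)}^{2q_{\kk(\x_m)}}\|u\|_{H^{1+q_{\kk(\x_m)}}(\Omega_{\kk(\x_m)})}^2,
\]
which I would re-index by the T-junction patch $j=\kk(\x_m)$. By \thref{ass:simple:t} only boundedly many patches meet at a T-junction, and $\x_m$ is a corner of all but one of them, so $\x_m$ contributes to at most a bounded number of patches $k$; hence the norm of a fixed $\Omega_j$ is charged at most $\lesssim|\TT(\Omega_j)|$ times. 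Together with~\eqref{eq:u0estim} this yields the claim, \emph{provided} the number of T-junctions on each patch boundary is uniformly bounded. Establishing this bounded-overlap property is the main obstacle: it does not follow from counting alone but must be extracted from \thref{ass:compatibility} (which forces every edge pre-image to have length at least $p\,\wh h_k$, thereby capping the grid-size disparity across an interface) together with the dyadic nature of the refinement, so that a fixed patch edge can carry only a controlled number of neighbouring corners. Once $|\TT(\Omega_j)|\lesssim 1$ is available, the resulting constant depends only on $C_G$, as asserted.
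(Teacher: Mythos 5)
Your argument tracks the paper's proof almost step for step (triangle inequality, $|\CC(\Omega_k)\cap\TT|\le 4$, \thref{lem:e:x:bound}, the corner-interpolation property~\eqref{eq:t18:corner}, re-indexing by $j=\kk(\x_m)$ with bounded multiplicity from \thref{ass:simple:t}), but it has a genuine gap at the final reassembly, and the repair you propose is not available. You apply the two-dimensional estimate of \thref{lem:trace:vtx} to one vertex at a time, so each T-junction $m\in\TT(\Omega_j)$ individually charges the full patch norm $h_j^{2q_j}\|u\|_{H^{1+q_j}(\Omega_j)}^2$, and summing over $m$ produces the spurious factor $|\TT(\Omega_j)|$. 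You then assert that $|\TT(\Omega_j)|\lesssim 1$ should follow from \thref{ass:compatibility} together with the dyadic refinement; it does not. \thref{ass:compatibility} only forces each single edge pre-image to have length at least $p\,\wh h_j$; it does not bound the number of edges (hence neighbours, hence T-junctions) on $\partial\Omega_j$: a patch whose own grid is fine ($\wh h_j$ small) may legally abut on the order of $1/(p\,\wh h_j)$ small neighbours along one side, each contributing a T-junction, and this count grows without bound under continued refinement of the neighbours. The explicit factor $|\VV(\Omega_\ell)|$ in \thref{thrm:main-lo} is the paper's own signal that no uniform vertex-count bound is assumed --- if your claimed bound held, that factor would be superfluous there.

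The missing idea is that \thref{lem:trace:vtx} is stated --- and must be used --- in summed form: its second estimate bounds $\sum_{m\in\VV(\Omega_k)}|\wh w(\wh\x_m^{(k)})|^2$ by $\wh h_k^{-2}\|\wh w\|_{L^2(\wh\Omega)}^2 + |\wh w|_{H^1(\wh\Omega)}^2 + \wh h_k^2\|\partial_{\wh x,\wh y}\wh w\|_{L^2(\wh\Omega)}^2$ with a constant \emph{independent of the number of vertices}, because each point evaluation is localized to the (mutually disjoint) edge pre-image containing it, each of length at least $p\,\wh h_k$ by \thref{ass:compatibility}, and the edgewise bounds telescope into norms on $\partial\wh\Omega$ via \thref{lem:para:trace}. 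Since your re-indexing already groups all terms belonging to a fixed $j=\kk(\x_m)$, and all of them evaluate the same error function $\wh w=(u-\Pi_h^{(j)}u)\circ G_j$, a single application of this summed estimate per patch $j$, followed by~\eqref{eq:QI_err:param:l2}--\eqref{eq:QI_err:param:h2} and the balancing $\wh h_j^{-2}\wh h_j^{2(1+q_j)}+\wh h_j^{2q_j}+\wh h_j^{2}\wh h_j^{2(q_j-1)}\eqsim \wh h_j^{2q_j}$ you already carried out, yields $\sum_{m\in\TT(\Omega_j)}|s_m|^2\lesssim h_j^{2q_j}\|u\|_{H^{1+q_j}(\Omega_j)}^2$ outright, with no counting of T-junctions. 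This is exactly how the paper closes the proof; everything else in your write-up is sound.
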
 
\begin{proof}
    Using~\eqref{eq:u1h-ho-def}, the triangle inequality, $|\CC(\Omega_k)\cap \TT|\le 4$,
    \thref{lem:e:x:bound} and \eqref{eq:t18:corner}, we have
    \begin{align*}
        \| u - u_{1,h} \|_{H^1(\Omega_k)}^2
        & \lesssim
        \| u - u_{0,h} \|_{H^1(\Omega_k)}^2
        +
        \sum_{m\in \CC(\Omega_k)\cap \TT}
        \| E_{\x_m}^{(k)} (u_{0,h}^{(\kk(\x_m))}(\x_m) - u(\x_m) ) \|_{H^1(\Omega_k)}^2 \\
        & \lesssim
        \| u - u_{0,h} \|_{H^1(\Omega_k)}^2
        +
        \sum_{m\in \CC(\Omega_k)\cap \TT}
        |  u_{0,h}^{(\kk(\x_m))}(\x_m) - u(\x_m)   |^2.
    \end{align*}
    By rearranging the sums and using \thref{ass:simple:t} we obtain
    \begin{align*}
        \sum_{k=1}^K
        \| u - u_{1,h} \|_{H^1(\Omega_k)}^2
        & \lesssim
        \sum_{k=1}^K \| u - u_{0,h} \|_{H^1(\Omega_k)}^2
        +
        \sum_{k=1}^K
        \sum_{m\in \TT(\Omega_k)}
        |  u_{0,h}^{(k)}(\x_m) - u(\x_m)   |^2.
    \end{align*}
    Using~\eqref{eq:u1h-ho-def} for the first term and
    \thref{lem:trace:vtx}, \eqref{eq:QI_err:h1} and \eqref{eq:QI_err:h2}
    for the second term, we obtain the desired result.
\end{proof}

In order to guarantee continuity across the edges as well, we define
\begin{equation}\label{eq:u2h-ho-def}
    u_{2,h} \in L^2(\Omega)
    \quad \mbox{such that} \quad
    u_{2,h}\big|_{\Omega_k}:=u_{2,h}^{(k)} := u_{1,h}^{(k)} + \sum_{i\in \EE(\Omega_k)}
                    E_{\Gamma_i}^{(k)} (u_{1,h}^{(\kk(\Gamma_i))}-u_{1,h}^{(k)}).
\end{equation}
We obtain the desired continuity statement, as well as the desired error estimate.
\begin{Lemma}\label{lem:u2h-ho-cont}
    We have $u_{2,h}\in C^0(\Omega)$.
\end{Lemma}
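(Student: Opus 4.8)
The goal is to show that the corrected function $u_{2,h}$ defined in~\eqref{eq:u2h-ho-def} is globally continuous, i.e., $u_{2,h}\in C^0(\Omega)$. Since each $u_{2,h}^{(k)}$ lies in $V_h^{(k)}$ and is therefore continuous \emph{within} each closed patch $\cl{\Omega_k}$, continuity on all of $\Omega$ reduces to matching across the shared interfaces. By \thref{ass:matching}, two distinct patches meet only in (a) the empty set, (b) a common vertex, or (c) a common edge. Case~(a) is vacuous, and case~(b) is precisely the vertex-matching statement already established in the lemma preceding \thref{lem:u1estim}, provided the edge corrections do not spoil the vertex values. So the plan is: first argue that the edge extensions leave all vertex values untouched, then verify agreement of the full traces along every shared edge $\Gamma_i$.

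\emph{Vertex values are preserved.}
The first step is to observe that for each $i\in\EE(\Omega_k)$, the term $E_{\Gamma_i}^{(k)}(u_{1,h}^{(\kk(\Gamma_i))}-u_{1,h}^{(k)})$ is an edge extension of a function lying in $H^1_0(\Gamma_i)$, because $u_{1,h}$ is continuous at the two vertices of $\Gamma_i$ (this is exactly what the lemma before \thref{lem:u1estim} gives, so the jump $u_{1,h}^{(\kk(\Gamma_i))}-u_{1,h}^{(k)}$ vanishes at $\VV(\Gamma_i)$). Hence \thref{lem:e:gamma:no:interference} applies and guarantees that $E_{\Gamma_i}^{(k)}(\cdot)$ vanishes at every vertex of $\Omega_k$ and on every other edge $\Gamma_j$, $j\ne i$. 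Consequently $u_{2,h}^{(k)}(\x_m)=u_{1,h}^{(k)}(\x_m)$ for all $m\in\VV(\Omega_k)$, so the already-proven vertex continuity of $u_{1,h}$ carries over verbatim to $u_{2,h}$; this settles case~(b).

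\emph{Edge traces agree.}
For case~(c), fix a shared edge $\Gamma_i$ with adjacent patches $\PP(\Gamma_i)=\{k,\ell\}$ and, without loss of generality, $k=\kk(\Gamma_i)$, so that $V_h^{(k)}|_{\Gamma_i}\subseteq V_h^{(\ell)}|_{\Gamma_i}$ by~\eqref{eq:edge:ext:embedded}. Restricting~\eqref{eq:u2h-ho-def} to $\Gamma_i$, every summand $E_{\Gamma_j}^{(k)}(\cdot)$ with $j\ne i$ vanishes on $\Gamma_i$ by \thref{lem:e:gamma:no:interference}, and likewise on the $\Omega_\ell$ side; only the $j=i$ term survives. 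On the coarse side $k$ the correction is $E_{\Gamma_i}^{(k)}(u_{1,h}^{(k)}-u_{1,h}^{(k)})=0$, so $u_{2,h}^{(k)}|_{\Gamma_i}=u_{1,h}^{(k)}|_{\Gamma_i}$. On the fine side $\ell$ the surviving term is $E_{\Gamma_i}^{(\ell)}(u_{1,h}^{(k)}-u_{1,h}^{(\ell)})$; since the edge extension operator is defined so that its trace on $\Gamma_i$ reproduces its argument (cf.~\eqref{eq:e:gamma:def}, where $\psi_\eta(0)=1$ fixes the value on $\wh\Gamma_i^{(\ell)}=[0,1]\times\{0\}$), we get $u_{2,h}^{(\ell)}|_{\Gamma_i}=u_{1,h}^{(\ell)}|_{\Gamma_i}+(u_{1,h}^{(k)}-u_{1,h}^{(\ell)})|_{\Gamma_i}=u_{1,h}^{(k)}|_{\Gamma_i}$. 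Thus both traces equal $u_{1,h}^{(k)}|_{\Gamma_i}$ and therefore coincide. Combining cases~(a)--(c) yields continuity across every interface, which is $u_{2,h}\in C^0(\Omega)$.

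\emph{Main obstacle.}
The only genuinely delicate point is confirming that the trace of $E_{\Gamma_i}^{(\ell)}$ on $\Gamma_i$ equals its input exactly (not merely up to the embedding), and that the nestedness~\eqref{eq:edge:ext:embedded} makes the added jump $u_{1,h}^{(k)}-u_{1,h}^{(\ell)}$ a \emph{legal} element of $V_h^{(\ell)}|_{\Gamma_i}$ so that the correction stays in the discrete space; both follow from \thref{ass:nested} together with the definition~\eqref{eq:e:gamma:def}. A secondary subtlety worth stating explicitly is that the vertex-correction terms $E_{\x_m}^{(k)}$ hidden inside $u_{1,h}^{(k)}$ must not reintroduce edge mismatches — but \thref{lem:e:x:no:interference} guarantees they vanish at all vertices, and their contribution to the edge trace is already accounted for through $u_{1,h}$, so no further bookkeeping is needed.
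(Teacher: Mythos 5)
Your proof is correct and follows essentially the same route as the paper: \thref{lem:e:gamma:no:interference} shows the edge corrections vanish at all vertices (preserving the vertex continuity of $u_{1,h}$), and on each edge $\Gamma_i$ both traces reduce to $u_{1,h}^{(\kk(\Gamma_i))}\big|_{\Gamma_i}$. Your expansion merely spells out details the paper leaves implicit, e.g.\ that the jump lies in $H^1_0(\Gamma_i)$ and, via \thref{ass:nested}, in $V_h^{(\ell)}\big|_{\Gamma_i}$, and that $\psi_\eta(0)=1$ makes the extension reproduce its argument on the edge.
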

\begin{proof}
Since the edge corrections vanish on all corners (\thref{lem:e:gamma:no:interference}), we have $u_{2,h}^{(k)}(\x_m)=u_{1,h}^{(k)}(\x_m)$, so continuity across vertices is retained.
Using \thref{lem:e:gamma:no:interference}, we immediately obtain $u_{2,h}^{(k)}\big|_{\Gamma_i} = u_{1,h}^{(\kk(\Gamma_i))}\big|_{\Gamma_i}$ for all $i\in \EE(\Omega_k)$ and all patches $\Omega_k$, i.e., continuity across edges. This shows continuity of the overall function.
\end{proof}

\begin{Lemma}\label{lem:u2estim}
    Let $u_{2,h}$ be defined as in~\eqref{eq:u2h-ho-def}. Then, the following estimate holds:
    \[
        \sum_{k=1}^K \| u - u_{2,h} \|_{H^1(\Omega_k)}^2 \lesssim \sum_{k=1}^K h_k^{2q_k} \|u\|_{H^{1+q_k}(\Omega_k)}^2.
    \]
\end{Lemma}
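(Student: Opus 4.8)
The plan is to mirror the structure of the proof of \thref{lem:u1estim}: split off the part $u-u_{1,h}$ that is already under control and estimate each edge correction separately. From the definition~\eqref{eq:u2h-ho-def} and the triangle inequality we have, patch-wise,
\[
\|u-u_{2,h}\|_{H^1(\Omega_k)}\le\|u-u_{1,h}\|_{H^1(\Omega_k)}+\sum_{i\in\EE(\Omega_k)}\big\|E_{\Gamma_i}^{(k)}\big(u_{1,h}^{(\kk(\Gamma_i))}-u_{1,h}^{(k)}\big)\big\|_{H^1(\Omega_k)},
\]
and the first term is summable by \thref{lem:u1estim}. The crucial observation is that whenever $k=\kk(\Gamma_i)$ the correction reads $E_{\Gamma_i}^{(k)}(0)=0$, so only the adjacent patch with the finer grid contributes; in particular $h_{\kk(\Gamma_i)}\ge h_k$ on every patch carrying a nontrivial edge correction.

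First I would bound a single correction. Since the continuity of $u_{1,h}$ at all vertices established right after~\eqref{eq:u1h-ho-def} shows that the jump $w:=u_{1,h}^{(\kk(\Gamma_i))}-u_{1,h}^{(k)}$ vanishes at the two endpoints of $\Gamma_i$, we have $w\in H^1_0(\Gamma_i)$ and the edge-extension estimate proved just above \thref{lem:e:gamma:no:interference} applies, yielding
\[
\big\|E_{\Gamma_i}^{(k)}w\big\|_{H^1(\Omega_k)}^2\lesssim h_{\kk(\Gamma_i)}^{-1}\|w\|_{L^2(\Gamma_i)}^2+h_{\kk(\Gamma_i)}\,|w|_{H^1(\Gamma_i)}^2.
\]
Inserting $\pm u$ on the edge, i.e. writing $w=(u_{1,h}^{(\kk(\Gamma_i))}-u)+(u-u_{1,h}^{(k)})$, the triangle inequality splits the edge norms of $w$ into edge norms of $u-u_{1,h}^{(\kk(\Gamma_i))}$ and of $u-u_{1,h}^{(k)}$.

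Next, for each of these pieces I would pass from the edge norm to patch norms via the trace inequalities of \thref{lem:para:trace} combined with the mapping equivalences of \thref{lem:map-if} and \thref{lem:map}. This turns $\|\cdot\|_{L^2(\Gamma_i)}^2$ and $|\cdot|_{H^1(\Gamma_i)}^2$ into $h^{-1}\|\cdot\|_{L^2(\Omega)}^2+h\,|\cdot|_{H^1(\Omega)}^2$ and $h^{-1}|\cdot|_{H^1(\Omega)}^2+h\,\|\partial_x\partial_y(\cdot)\|_{L^2(\Omega)}^2$, respectively, the mixed second derivative being exactly the quantity controlled in~\eqref{eq:QI_err:h2}. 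After multiplying by the weights $h_{\kk(\Gamma_i)}^{\mp1}$ and using $h_k\le h_{\kk(\Gamma_i)}$, all weights collapse to at most $h_k^{-2}$ on the $L^2$-term, $1$ on the $H^1$-seminorm term, and $h_k^2$ on the mixed-derivative term, on whichever patch ($\Omega_k$ or $\Omega_{\kk(\Gamma_i)}$) the trace was taken.

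Finally I would bound these three weighted patch-local quantities of $u-u_{1,h}$ by $h^{2q}\|u\|_{H^{1+q}}^2$. Writing $u_{1,h}=u_{0,h}+(\text{vertex corrections})$, the $u_{0,h}$-contribution follows directly from~\eqref{eq:QI_err:l2},~\eqref{eq:QI_err:h1} and~\eqref{eq:QI_err:h2}, while the vertex corrections are handled with \thref{lem:e:x:bound} together with the vertex-jump estimates already used inside the proof of \thref{lem:u1estim} (through \thref{lem:trace:vtx}). Summing over all edges and patches — each edge contributing only to its finer patch, and each patch carrying at most four edges — then yields the claim. I expect the main difficulty to be precisely this bookkeeping of two grid sizes: one must keep the coarse weight $h_{\kk(\Gamma_i)}$ coming from the edge extension apart from the fine weight $h_k$ coming from the trace, exploit $h_k\le h_{\kk(\Gamma_i)}$ at the right moments, and verify that the vertex-correction pieces of $u_{1,h}$ do not spoil the $h^{-2}$-weighted $L^2$ and $h^2$-weighted mixed-derivative estimates, since \thref{lem:e:x:bound} naturally carries the grid size $h_{\kk(\x_m)}$ of yet a further patch.
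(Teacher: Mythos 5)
Your setup follows the paper exactly up to the point where the jump $w=u_{1,h}^{(\kk(\Gamma_i))}-u_{1,h}^{(k)}$ is isolated in the weighted edge norm $h_{\kk(\Gamma_i)}^{-1}\|w\|^2_{L^2(\Gamma_i)}+h_{\kk(\Gamma_i)}|w|^2_{H^1(\Gamma_i)}$, including the correct observation that $w\in H^1_0(\Gamma_i)$ by the vertex continuity of $u_{1,h}$. The genuine gap is in the next step: after inserting $\pm u$, you estimate the fine-side piece $u-u_{1,h}^{(k)}$ by the trace inequality of \thref{lem:para:trace} on the fine patch $\Omega_k$. For the $h_{\kk(\Gamma_i)}^{-1}$-weighted $L^2$-edge term this works (there $h_k\le h_{\kk(\Gamma_i)}$ helps you), but for the $H^1$-edge term it fails, because the weight $h_{\kk(\Gamma_i)}$ is the \emph{coarse} grid size and the inequality $h_k\le h_{\kk(\Gamma_i)}$ points the wrong way. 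Whatever value of the free parameter $\wh h$ you pick in \thref{lem:para:trace}, balancing the $|\cdot|_{H^1(\wh\Omega)}$-bound of type \eqref{eq:QI_err:h1} against the mixed-derivative bound \eqref{eq:QI_err:h2} gives the optimum $\wh h=\wh h_k$ and at best $|u-u_{0,h}^{(k)}|^2_{H^1(\Gamma_i)}\lesssim h_k^{-1}h_k^{2q_k}\|u\|^2_{H^{1+q_k}(\Omega_k)}$, so that $h_{\kk(\Gamma_i)}|u-u_{0,h}^{(k)}|^2_{H^1(\Gamma_i)}$ retains the grid-disparity factor $h_{\kk(\Gamma_i)}/h_k$. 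This factor is not bounded by a constant depending only on $C_G$ (\thref{ass:compatibility} limits it only by roughly $\wh h_k^{-1}$), so your claimed collapse of all weights to $h_k^{-2}$, $1$ and $h_k^2$ is wrong precisely for this term; you identified the two-grid-size bookkeeping as the main difficulty, but the proposed resolution does not carry it out.

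The paper's proof circumvents this by never taking a trace of the fine-side error at all. It invokes the two edge properties of the projector from \thref{lem:t18}: by \thref{ass:nested} the coarse trace $u_{0,h}^{(\kk(\Gamma_i))}|_{\Gamma_i}$ lies in the finer trace space $V_h^{(k)}|_{\Gamma_i}$, and \eqref{eq:t18:edge} states that $\Pi_h^{(k)}$ restricted to the edge is the $H^1(\Gamma_i)$-best approximation, whence $|u-u_{0,h}^{(k)}|_{H^1(\Gamma_i)}\le|u-u_{0,h}^{(\kk(\Gamma_i))}|_{H^1(\Gamma_i)}$; the $L^2$-edge part is reduced to the $H^1$-edge part by \eqref{eq:t18:approx-edge}. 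The whole jump is thus bounded by the coarse-side edge error alone, the weights $h_{\kk(\Gamma_i)}^{\pm1}$ then match the grid size of the one patch on which \thref{lem:para:trace} is applied, and after re-indexing the sums every patch sees only its own $h_k$. Two further, smaller corrections to your plan: when expanding $u_{1,h}$ inside the jump you must use that for a vertex that is a corner of both adjacent patches the extensions $E^{(k)}_{\x_m}$ and $E^{(\kk(\Gamma_i))}_{\x_m}$ agree on $\Gamma_i$ by construction \eqref{eq:e:x:def2}, so only T-junction corrections with $m\in\TT(\Omega_{\kk(\Gamma_i)})$ survive --- this is exactly what guarantees $h_{\kk(\x_m)}=h_{\kk(\Gamma_i)}$ so the weights match; and these surviving corrections must be estimated in the \emph{edge} norm via \thref{lem:e:x:edgebound}, not via \thref{lem:e:x:bound}, which controls only patch norms.
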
 
\begin{proof}
    Using~\eqref{eq:u2h-ho-def}, the triangle inequality, the fact that there are at most $4$ extensions active on each patch (there are only $4$ indices $i \in \EE(\Omega_k)$ such that $k\ne \kk(\Gamma_i)$) and $h_{\kk(\Gamma_i)} \le H_{\kk(\Gamma_i)} \le \mbox{diam }\Omega \lesssim 1$, 
    we have
    \begin{align*}
        &\| u - u_{2,h} \|_{H^1(\Omega_k)}^2
        \lesssim
        \| u - u_{1,h} \|_{H^1(\Omega_k)}^2
        +
        \sum_{i\in \mathbb E(\Omega_k)}
        \| E_{\Gamma_i}^{(k)} (u_{1,h}^{(\kk(\Gamma_i))} - u_{1,h}^{(k)} ) \|_{H^1(\Omega_k)}^2 \\
        & \qquad \lesssim
        \| u - u_{1,h} \|_{H^1(\Omega_k)}^2
        +
        \sum_{i\in \mathbb E(\Omega_k)}
        \hnorm u_{1,h}^{(\kk(\Gamma_i))}- u_{1,h}^{(k)} \hnorm_{\Gamma_i}^2, 
    \end{align*}
    where we define $\hnorm w \hnorm_{\Gamma_i}^2 := h_{\kk(\Gamma_i)} | w |_{H^1(\Gamma_i)}^2 + h_{\kk(\Gamma_i)}^{-1} \| w \|_{L^2(\Gamma_i)}^2$ for convenience.
    In order to estimate the jumps $u_{1,h}^{(\kk(\Gamma_i))}- u_{1,h}^{(k)}$, we use \eqref{eq:u1h-ho-def}. We observe that vertex corrections are only performed for vertices $\x_m$ that are T-junctions. If $\x_m$ is a corner both of $\Omega_k$ and $\Omega_{\kk(\Gamma_i)}$ ($m\in \CC(\Omega_k)\cap \CC(\Omega_{\kk(\Gamma_i)})$), then $E_{\x_m}^{(k)}$ and 
    $E_{\x_m}^{(\kk(\Gamma_i))}$ agree on $\Gamma_i$. So, there can only be a contribution if $m\in \TT(\Omega_k)$ or $m\in \TT(\Omega_{\kk(\Gamma_i)})$. Since $\Omega_{\kk(\Gamma_i)}$ has the smaller trace space, only the latter case can occur. Thus, we have
    \begin{align*}
        \hnorm u_{1,h}^{(\kk(\Gamma_i))}- u_{1,h}^{(k)} \hnorm_{\Gamma_i}^2
        & \lesssim
        \hnorm u_{0,h}^{(\kk(\Gamma_i))}- u_{0,h}^{(k)} \hnorm_{\Gamma_i}^2
        +
        \sum_{m\in \VV(\Gamma_i) \cap \TT(\Omega_{\kk(\Gamma_i)})}
        \hnorm  E_{\x_m}^{(k)} ( u_{0,h}^{(\kk(\Gamma_i))}(\x_m) - u_{0,h}^{(k)}(\x_m)) \hnorm_{\Gamma_i}^2.
    \end{align*}
    Here, the first summand is estimated using~\eqref{eq:t18:edge} and~\eqref{eq:t18:approx-edge}. For estimating the second summand, we use $h_{\kk(\x_m)}=h_{\kk(\Gamma_i)}$ and \thref{lem:e:x:edgebound}. So, we obtain
    \begin{align*}
        \hnorm u_{1,h}^{(\kk(\Gamma_i))}- u_{1,h}^{(k)} \hnorm_{\Gamma_i}^2
        & \lesssim
        \hnorm u_{0,h}^{(\kk(\Gamma_i))}- u \hnorm_{\Gamma_i}^2
        +
        \sum_{m\in \VV(\Gamma_i) \cap \TT(\Omega_{\kk(\Gamma_i)})}
        |  u_{0,h}^{(\kk(\Gamma_i))}(\x_m) - u(\x_m)|^2.
    \end{align*}
    By rearranging the sums and using $h_k\le h_{\kk(\Gamma_i)}$ for all $i\in \EE(\Omega_k)$, we have
    \begin{align*}
        \sum_{k=1}^{K} \| u - u_{2,h} \|_{H^1(\Omega_k)}^2
        &\lesssim
        \sum_{k=1}^{K}
        \| u - u_{1,h} \|_{H^1(\Omega_k)}^2
        +
        \sum_{k=1}^{K}
        \sum_{i\in \mathbb E(\Omega_k)}
        (h_k^{-1} \| u - u_{0,h}^{(k)} \|_{L^2(\Gamma_i)}^2
        +h_k| u - u_{0,h}^{(k)} |_{H^1(\Gamma_i)}^2)
        \\&\qquad+
        \sum_{k=1}^{K}
        \sum_{m\in \TT(\Omega_k)}
        |  u_{0,h}^{(k)}(\x_m) - u(\x_m)|^2.
    \end{align*}
    Here, the first summand is estimated with \thref{lem:u1estim} and the other summands are estimated using \thref{lem:para:trace} and \thref{lem:trace:vtx}, respectively. Combined with~\eqref{eq:QI_err:l2}, \eqref{eq:QI_err:h1} and~\eqref{eq:QI_err:h2} we obtain the desired result.
\end{proof}

\begin{proof}[Proof of \thref{thrm:main-ho}]
    Using~\thref{lem:u2h-ho-cont} and $u_{2,h}^{(k)}\in V_h^{(k)}$ for all $k$, we have $u_{2,h}\in V_h$. Thus, the desired statement follows for smooth $u$ directly from \thref{lem:u2estim}. For $u\in \mathcal H^{1+q}(\Omega)$, the result follows using standard density arguments.
\end{proof}

\subsection{Overall error estimate for the low-regularity case}

In this section, we show \thref{thrm:main-lo}. We again assume that $u$ is an arbitrary but fixed smooth function first. Since we do not know that the function to be approximated is in $H^2(\Omega)$, we cannot use the projector $\Pi_h^{(k)}$, since that projector is not stable in $H^{1+q}(\Omega)$ for $q<1$. Instead, we define $u_{0,h}$ by patch-wise interpolation using a standard $H^1(\Omega)$-orthogonal projector. So, let $\wh \pi_h^{(k)}:H^1(\wh \Omega) \to \wh V_h^{(k)}$ be the projector that is orthogonal with respect to the $H^1(\wh \Omega)$-scalar product. Again, define $\pi_h^{(k)}$ via the pull-back principle, i.e., $\pi_h^{(k)} u = (\wh \pi_h^{(k)} (u\circ G_k))\circ G_k^{-1}$ and $u_{0,h}$ as follows:
\begin{equation*}
    u_{0,h} \in L^2(\Omega)
    \quad \mbox{such that} \quad
    u_{0,h}\big|_{\Omega_k} := u_{0,h}^{(k)} := \pi_h^{(k)} (u|_{\Omega_k}).
\end{equation*}
Using the results from \cite{Sande2022} and \thref{lem:map}, we know that
\begin{equation}\label{eq:sande1}
            \| u - u_{0,h} \|_{L^2(\Omega_k)}
            \lesssim
            h_k^{1+q_k}
            |u|_{H^{1+q_k}(\Omega_k)}
            \quad\mbox{and}\quad
            | u - u_{0,h} |_{H^r(\Omega_k)}
            \lesssim
            h_k^{q_k-r}
            |u|_{H^{1+q_k}(\Omega_k)}
\end{equation}
for all $q_k \in \{0,1,\ldots,p\}$ and $r\in\{1,2\}$ with $r\le q_k$. Again, the Hilbert space interpolation theorem \cite[Theorem 7.23]{Adams2003} provides 
\begin{equation}\label{eq:sandeinterpol}
            \| u - u_{0,h} \|_{H^{1+\varepsilon}(\Omega_k)}
            \lesssim
            h_k^{q_k-\varepsilon}
            |u|_{H^{1+q_k}(\Omega_k)}
\end{equation}
for all $q_k \in (0,p]$ and $\varepsilon\in(0,\min_k q_k)$. From standard Sobolev space embedding theorems \cite[Theorem 4.12]{Adams2003}, we know that for all $\varepsilon > 0$ there is a constant $C_\varepsilon \ge 1$ (depending on $\varepsilon$ and $q_k$ only) such that
\[
	\|\wh u\|_{C(\wh\Omega)}
 := \sup_{x\in\wh \Omega}|\wh u(x)| 
 \le C_\varepsilon \|\wh u\|_{H^{1+\varepsilon}(\wh \Omega)}
  \quad\mbox{for all $\wh u \in H^{1+\varepsilon}(\wh \Omega)$.}
\]
Consequently, using \eqref{eq:sandeinterpol} and \thref{lem:map}, we also know
\begin{equation}\label{eq:sande:corn}
    \| u - u_{0,h} \|_{C(\Omega_k)}
    \lesssim
		C_\varepsilon
    h_k^{q_k-\varepsilon}
    |u|_{H^{1+q_k}(\Omega_k)}
    \quad\mbox{for all }\varepsilon\in(0,q_k).
\end{equation}
Since $u_{0,h}$ does not interpolate $u$, we need corrections not only for the T-junctions, but also for the corner vertices. For each vertex $\x_m$ (corner or T-junction), the extension $E_{\x_m,lr}^{(k)}$ is defined via
\begin{equation*}
        E_{\x_m,lr}^{(k)}s = (\wh E_{\x_m,lr}^{(k)} s) \circ G_k^{-1},
        \quad\mbox{where}\quad
        (\wh E_{\x_m,lr}^{(k)}\;s)(\wh x, \wh y)
        :=
        \psi_{p \wh h_k}(\wh x)\psi_{p \wh h_k}(\wh y)
        s.
\end{equation*}
As opposed to the case of high regularity, the extension operators do not necessarily match at the edges. Again, using \thref{ass:compatibility}, we obtain that $(E_{\x_m,lr}^{(k)}s)(\x_n)=0$ for $m \ne n$.

We then set
\begin{equation}\label{eq:u1h-lo-def}
    u_{1,h} \in L^2(\Omega)
    \quad \mbox{such that} \quad
    u_{1,h}^{(k)} := u_{0,h}^{(k)} + \sum_{m\in \CC(\Omega_k)}
                    E_{\x_m,lr}^{(k)} (u_{0,h}^{(\kk(\x_m))}(\x_m)-u_{0,h}^{(k)}(\x_m)),
\end{equation}
where $\kk(\x_m)$ is defined as in Section~\ref{sec:5:4} if $m\in \TT$. If $m\in \CC$, then $\kk(\x_m)\in \PP(\x_m)$ can be chosen arbitrarily.
Using the same arguments as in the last section, we observe that $u_{1,h}$ is continuous across all vertices.
We obtain the following error estimate.

\begin{Lemma}\label{lem:u1estim:lr}
    Let $u_{1,h}$ be defined as in~\eqref{eq:u1h-lo-def}. Then, the following estimate holds:   
    \[
        \sum_{k=1}^K 
        \left(\| u - u_{1,h} \|_{H^1(\Omega_k)}^2 
        + \| u - u_{1,h} \|_{C(\Omega_k)}^2 \right)
        \lesssim 
        \sum_{k=1}^K |\VV(\Omega_k)| C_\varepsilon h_k^{2(q_k-\varepsilon)}
        \|u\|_{H^{1+q_k}(\Omega_k)}^2
    \]
		for any $\varepsilon\in(0,\min_kq_k)$.
\end{Lemma}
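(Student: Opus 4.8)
The plan is to mirror the structure of the proof of \thref{lem:u1estim}, replacing the interpolation-based ingredients by the $H^1$-projection bounds. Concretely, I would replace \eqref{eq:QI_err:h1} by \eqref{eq:sandeinterpol}, use the sup-norm estimate \eqref{eq:sande:corn} for the vertex values, and replace the extension bound \thref{lem:e:x:bound} by an analogous bound for the low-regularity extension $E_{\x_m,lr}^{(k)}$.

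First I would establish the extension estimate
\[
    \|E_{\x_m,lr}^{(k)}s\|_{H^1(\Omega_k)}^2 + \|E_{\x_m,lr}^{(k)}s\|_{C(\Omega_k)}^2 \lesssim |s|^2
    \quad\mbox{for all}\quad s\in\RR .
\]
Since the low-regularity extension uses the parameter $\eta = p\,\wh h_k$ (which is an admissible knot by \thref{ass:uniform} and the assumption of at least $p$ knots per patch, so that $\eta\in(0,1]$), \thref{lem:phinorm} gives $\|\psi_{p\wh h_k}\|_{L^2(0,1)}^2\eqsim \wh h_k$ and $|\psi_{p\wh h_k}|_{H^1(0,1)}^2\eqsim \wh h_k^{-1}$. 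Plugging these into the tensor-product expansions of the $H^1$-seminorm and $L^2$-norm on $\wh\Omega$ yields $|\wh E_{\x_m,lr}^{(k)}s|_{H^1(\wh\Omega)}^2\eqsim |s|^2$ and $\|\wh E_{\x_m,lr}^{(k)}s\|_{L^2(\wh\Omega)}^2\eqsim \wh h_k^2|s|^2$. Transporting to $\Omega_k$ with \thref{lem:map} and using $h_k=H_k\wh h_k\lesssim 1$ controls the $H^1$-norm, while $0\le\psi_\eta\le 1$ together with $\psi_\eta(0)=1$ gives $\|E_{\x_m,lr}^{(k)}s\|_{C(\Omega_k)}=|s|$.

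Next I would apply the triangle inequality to \eqref{eq:u1h-lo-def}, using $|\CC(\Omega_k)|\le 4$, to bound both the $H^1$- and $C$-norm of $u-u_{1,h}$ on $\Omega_k$ by the corresponding norm of $u-u_{0,h}$ plus $\sum_{m\in\CC(\Omega_k)}|s_{k,m}|^2$, where $s_{k,m}:=u_{0,h}^{(\kk(\x_m))}(\x_m)-u_{0,h}^{(k)}(\x_m)$ and the extension terms are absorbed by the estimate above. The $u-u_{0,h}$ contributions are handled by \eqref{eq:sandeinterpol} (together with $\|\cdot\|_{H^1(\Omega_k)}\le\|\cdot\|_{H^{1+\varepsilon}(\Omega_k)}$) and by \eqref{eq:sande:corn}. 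For the jumps I would insert $u(\x_m)$ and estimate $|s_{k,m}|\le\|u-u_{0,h}\|_{C(\Omega_{\kk(\x_m)})}+\|u-u_{0,h}\|_{C(\Omega_k)}$, which by \eqref{eq:sande:corn} are controlled by the sup-norm bounds on $\Omega_{\kk(\x_m)}$ and $\Omega_k$.

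The main obstacle is the combinatorial rearrangement of the double sum $\sum_k\sum_{m\in\CC(\Omega_k)}|s_{k,m}|^2$ so that it collapses into $\sum_k |\VV(\Omega_k)|\,C_\varepsilon h_k^{2(q_k-\varepsilon)}\|u\|_{H^{1+q_k}(\Omega_k)}^2$. The contributions weighted by the patch $k$ itself give at most $|\CC(\Omega_k)|\le|\VV(\Omega_k)|$ copies and are immediate. The contributions weighted by the representative patch $\kk(\x_m)$ must be regrouped by the index $j=\kk(\x_m)$: since $\kk$ assigns each vertex a single patch, since $\{m:\kk(\x_m)=j\}\subseteq\VV(\Omega_j)$, and since the number of patches meeting at any vertex is uniformly bounded, each term $C_\varepsilon h_j^{2(q_j-\varepsilon)}\|u\|_{H^{1+q_j}(\Omega_j)}^2$ is produced at most $\lesssim|\VV(\Omega_j)|$ times. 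Collecting both groups and using $|\CC(\Omega_k)|\le|\VV(\Omega_k)|$ yields the claimed estimate; the only bookkeeping subtlety is that squaring \eqref{eq:sande:corn} produces a factor $C_\varepsilon^2$, which is absorbed into the constant still denoted $C_\varepsilon$.
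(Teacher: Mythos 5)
Your proposal is correct and follows essentially the same route as the paper's proof: triangle inequality applied to~\eqref{eq:u1h-lo-def}, the extension bound via \thref{lem:phinorm} and \thref{lem:map} (with $\|E_{\x_m,lr}^{(k)}s\|_{C(\Omega_k)}=|s|$), insertion of $u(\x_m)$ into the jump terms, rearrangement of the double sum to produce the $|\VV(\Omega_k)|$ factor, and conclusion via~\eqref{eq:sandeinterpol} and~\eqref{eq:sande:corn}. If anything, you are slightly more careful than the paper, which leaves the multiplicity counting implicit in ``rearranging the sums'' and cites~\eqref{eq:sande1} where the interpolated estimate~\eqref{eq:sandeinterpol} is what is needed for non-integer $q_k$.
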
 
\begin{proof}
    Using~\eqref{eq:u1h-lo-def} and Lemmas~\ref{lem:map} and \ref{lem:phinorm}, we obtain analogously to the proof of \thref{lem:e:x:bound}:
    \begin{align*}
        \| u - u_{1,h} \|_{H^1(\Omega_k)}^2
        & \lesssim
        \| u - u_{0,h} \|_{H^1(\Omega_k)}^2
        +
        \sum_{m\in \VV(\Omega_k)}
        \| E_{\x_m,lr}^{(k)} (u_{0,h}^{(\kk(\x_m))}(\x_m) - u_{0,h}^{(k)}(\x_m) ) \|^2_{H^1(\Omega_k)} \\
        & \lesssim
        \| u - u_{0,h} \|_{H^1(\Omega_k)}^2
        +
        \sum_{m\in \VV(\Omega_k) }
        |  u_{0,h}^{(\kk(\x_m))}(\x_m) - u_{0,h}^{(k)}(\x_m)   |^2.
    \end{align*}
    Since $\| E_{\x_m,lr}^{(k)} s \|_{C(\Omega_k)} = |s|$, we also have
    \begin{align*}
        \| u - u_{1,h} \|_{C(\Omega_k)}^2
        & \lesssim
        \| u - u_{0,h} \|_{C(\Omega_k)}^2
        +
        \sum_{m\in \VV(\Omega_k) }
        |  u_{0,h}^{(\kk(\x_m))}(\x_m) - u_{0,h}^{(k)}(\x_m)   |^2.
    \end{align*}
		By rearranging the sums and using the triangle inequality 
		$|  u_{0,h}^{(\kk(\x_m))}(\x_m) - u_{0,h}^{(k)}(\x_m)   |^2
		\lesssim 
		|  u(\x_m) - u_{0,h}^{(\kk(\x_m))}(\x_m) |^2
		+
		|  u(\x_m) - u_{0,h}^{(k)}(\x_m)   |^2$, we obtain
    \begin{align*}
        \sum_{k=1}^K \left(\| u - u_{1,h} \|_{H^1(\Omega_k)}^2 + \| u - u_{1,h} \|_{C(\Omega_k)}^2\right)
        &\lesssim
        \sum_{k=1}^K \left(\| u - u_{0,h} \|_{H^1(\Omega_k)}^2 + \| u - u_{0,h} \|_{C(\Omega_k)}^2\right)
        \\&\qquad+
        \sum_{m\in \VV(\Omega_k) }
				|  u(\x_m) - u_{0,h}^{(k)}(\x_m)   |^2.
    \end{align*}
    \eqref{eq:sande1} and \eqref{eq:sande:corn} give the desired result. 
\end{proof}

To guarantee continuity across edges, we also need edge corrections. Here, we base them on the discrete harmonic extensions, i.e., $\wh E_{\Gamma_i,lr}^{(k)}$ is a linear operator that maps from the trace space $\wh V_h^{(k)}|_{\wh \Gamma_i^{(k)}}$ to $\wh V_h^{(k)}$ that satisfies $|\wh E_{\Gamma_i,lr}^{(k)} \wh u|_{\wh \Gamma_i^{(k)}} = \wh u$ and $|\wh E_{\Gamma_i,lr}^{(k)} \wh u|_{\partial\wh\Omega\setminus\wh \Gamma_i^{(k)}} = 0$ such that it minimizes its $H^1(\wh\Omega)$-seminorm, i.e.,
$|\wh E_{\Gamma_i,lr}^{(k)} \wh u|_{H^1(\wh \Omega)}$. Due to \cite[Theorem~4.2,Lemma~4.15]{Schneckenleitner}, the following estimate holds:
\[
		|\wh E_{\Gamma_i,lr}^{(k)} \wh u|_{H^1(\wh \Omega)}
		\lesssim 
		p |\wh u|_{H^{1/2}(\partial\wh\Omega)}
		\lesssim 
		p |\wh u|_{H^{1/2}(\wh \Gamma_i^{(k)})}
		+ p (1+\log \wh h_k^{-1} + \log p) \|\wh u\|_{C(\wh \Gamma_i^{(k)})}
		\;\mbox{for all}\; \wh u \in \wh V_h^{(k)}|_{\wh \Gamma_i^{(k)}}.
\]
The Poincaré inequality and \thref{lem:map} yield for $E_{\Gamma_i,lr}^{(k)}s = (\wh E_{\Gamma_i,lr}^{(k)} s) \circ G_k^{-1}$ that
\begin{equation}\label{eq:harmon}
		\|E_{\Gamma_i,lr}^{(k)} u\|_{H^1(\Omega_k)}
		\lesssim 
		p \|u\|_{H^{1/2}(\Gamma_i)}
		+ p (1+\log \tfrac{H_k}{h_k} + \log p) \|u\|_{C( \Gamma_i)}
		\;\mbox{for all}\; u \in V_h^{(k)}|_{\Gamma_i}.
\end{equation}
By construction, we have $\big(E^{(k)}_{\Gamma_i,lr}u\big)\big|_{\Gamma_j}=0$
and
$\big(E^{(k)}_{\Gamma_i,lr}u\big)(\x_m)=0$
for all $u \in V_h^{(k)}|_{\Gamma_i}\cap H^1_0(\Gamma_i)$, all $j \in \EE(\Omega_k)\setminus\{i\}$ and $m\in \VV(\Omega_k)$.
We define
\begin{equation}\label{eq:u2h-lo-def}
    u_{2,h} \in L^2(\Omega)
    \quad \mbox{such that} \quad
    u_{2,h}^{(k)} := u_{1,h}^{(k)} + \sum_{i\in \EE(\Omega_k)}
                    E_{\Gamma_i,lr}^{(k)} (u_{1,h}^{(\kk(\Gamma_i))}-u_{1,h}^{(k)}).
\end{equation}
Analogously to the preceding section, we obtain that $u_{2,h}$ is continuous across all edges and that the continuity across the vertices is retained. The following lemma provides an error estimate.

\begin{Lemma}\label{lem:u2estim:lr}
    Let $u_{2,h}$ be defined as in~\eqref{eq:u2h-lo-def}. Then, the following estimate holds:   
    \[
            \sum_{k=1}^K \| u - u_{2,h} \|_{H^1(\Omega_k)}^2 
            \lesssim C_\varepsilon  p
                \max_\ell (1+\log \tfrac{H_\ell}{h_\ell} + \log p) |\VV(\Omega_\ell)|
        		\sum_{k=1}^K   h_k^{2(q_k-\varepsilon)} \|u\|_{H^{1+q_k}(\Omega_k)}^2.
    \]
\end{Lemma}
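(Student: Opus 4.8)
The plan is to mirror the structure of the high-regularity proof of \thref{lem:u2estim}, replacing the tensor-product projector bounds by the low-regularity estimates \eqref{eq:sande1}--\eqref{eq:sande:corn} together with \thref{lem:u1estim:lr}, and replacing the explicit edge extension bound by the $p$-robust polylogarithmic discrete harmonic extension estimate \eqref{eq:harmon}. First I would apply the triangle inequality to \eqref{eq:u2h-lo-def}, splitting $\|u-u_{2,h}\|_{H^1(\Omega_k)}^2$ into $\|u-u_{1,h}\|_{H^1(\Omega_k)}^2$, which is already controlled by \thref{lem:u1estim:lr}, plus the sum of edge-correction contributions $\|E_{\Gamma_i,lr}^{(k)}(u_{1,h}^{(\kk(\Gamma_i))}-u_{1,h}^{(k)})\|_{H^1(\Omega_k)}^2$. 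Exactly as in the high-regularity case, only the at most four edges $\Gamma_i\in\EE(\Omega_k)$ with $k\ne\kk(\Gamma_i)$ contribute, so the inner sum has a uniformly bounded number of terms.

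The core of the argument is the estimate of each edge-correction term. Writing $w_i := u_{1,h}^{(\kk(\Gamma_i))}-u_{1,h}^{(k)}$ on $\Gamma_i$, the decisive structural observation is that $u_{1,h}$ has already been rendered continuous at every vertex by the corrections in \eqref{eq:u1h-lo-def}, so $w_i$ vanishes at the two endpoints of $\Gamma_i$, i.e. $w_i\in V_h^{(k)}|_{\Gamma_i}\cap H^1_0(\Gamma_i)$. This is precisely the setting in which \eqref{eq:harmon} applies, bounding $\|E_{\Gamma_i,lr}^{(k)}w_i\|_{H^1(\Omega_k)}$ in terms of $p\|w_i\|_{H^{1/2}(\Gamma_i)}$ and $p(1+\log\tfrac{H_k}{h_k}+\log p)\|w_i\|_{C(\Gamma_i)}$. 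I would then bound the two jump norms separately: splitting $w_i=(u-u_{1,h}^{(k)})-(u-u_{1,h}^{(\kk(\Gamma_i))})$ using continuity of $u$ across $\Gamma_i$, the $H^{1/2}$-norm is controlled by the patch-wise $H^1$-errors through a trace inequality and the $C$-norm directly by the patch-wise $C$-errors, both of which \thref{lem:u1estim:lr} and \eqref{eq:sande:corn} bound by $|\VV(\Omega_{k'})|C_\varepsilon h_{k'}^{2(q_{k'}-\varepsilon)}\|u\|_{H^{1+q_{k'}}(\Omega_{k'})}^2$ for $k'\in\{k,\kk(\Gamma_i)\}$.

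Finally I would sum over all patches $k$, rearrange the edge contributions (each interior edge being shared by exactly two patches), and use $h_k\le h_{\kk(\Gamma_i)}$ for $i\in\EE(\Omega_k)$ as well as $H_k\le\mbox{diam }\Omega\lesssim 1$ to absorb the geometric scaling constants, after which the polylogarithmic factor and the vertex count can be pulled out as $\max_\ell\,p(1+\log\tfrac{H_\ell}{h_\ell}+\log p)|\VV(\Omega_\ell)|$. The main obstacle I anticipate is the careful bookkeeping of the powers of $p$ and of the logarithm: a crude squaring of \eqref{eq:harmon} would produce $p^2$ and a squared logarithm, so obtaining only the single powers claimed in the statement requires estimating $\|w_i\|_{H^{1/2}(\Gamma_i)}$ sharply rather than bounding it wastefully by the full $H^1$-norm, and keeping the trace and $C$-embedding constants uniform across patches of disparate grid sizes.
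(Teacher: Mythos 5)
Your proposal follows essentially the same route as the paper's proof: triangle inequality against $u_{1,h}$, the discrete harmonic extension bound~\eqref{eq:harmon} applied to the jumps $u_{1,h}^{(\kk(\Gamma_i))}-u_{1,h}^{(k)}$ (which lie in $H^1_0(\Gamma_i)$ thanks to the vertex corrections), splitting the jumps via continuity of $u$, a trace inequality $\|\cdot\|_{H^{1/2}(\partial\Omega_k)}\lesssim \|\cdot\|_{H^1(\Omega_k)}$, and \thref{lem:u1estim:lr} to conclude. One remark on your anticipated obstacle: the paper does use exactly the ``wasteful'' trace bound you hoped to avoid, and the single powers of $p$ and of the logarithm come from reading~\eqref{eq:harmon} as shorthand for the squared-norm estimates of~\cite{Schneckenleitner} (the authors themselves concede in the conclusions that the parameter dependence in the low-regularity case may not be sharp), so no sharper estimate of $\|w_i\|_{H^{1/2}(\Gamma_i)}$ is attempted or needed for the lemma as stated.
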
 
\begin{proof}
    Using~\eqref{eq:u2h-lo-def} and~\eqref{eq:harmon}, we obtain
    \begin{align*}
        \| u - u_{2,h} \|_{H^1(\Omega_k)}^2
         \lesssim
        \| u - u_{1,h} \|_{H^1(\Omega_k)}^2
        &+
        p
        \sum_{i\in \EE(\Omega_k)}
        \hnorm u_{1,h}^{(\kk(\Gamma_i))} - u_{1,h}^{(k)} \hnorm_{\Gamma_i}^2,
    \end{align*}
    where $ \hnorm w \hnorm_{\Gamma_i}^2:=\|w\|_{H^{1/2}(\Gamma_i)}^2 + \Lambda \| w \|_{C(\Gamma_i)}^2$ and $\Lambda:=\max_\ell (1+\log \tfrac{H_\ell}{h_\ell} + \log p)$.	Using the triangle inequality ($\hnorm u_{1,h}^{(\kk(\Gamma_i))} - u_{1,h}^{(k)} \hnorm_{\Gamma_i} \le \hnorm u- u_{1,h}^{(\kk(\Gamma_i))}  \hnorm_{\Gamma_i} + \hnorm u - u_{1,h}^{(k)}  \hnorm_{\Gamma_i} $), by rearranging the local contributions and using $\sum_i \|w\|_{H^{1/2}(\Gamma_i)}\le \|w\|_{H^{1/2}(\partial\Omega_k)}$, we obtain
    \begin{align*}
    		\sum_{k=1}^K
        \| u - u_{2,h} \|_{H^1(\Omega_k)}^2
         \lesssim
    		\sum_{k=1}^K
        \| u - u_{1,h} \|_{H^1(\Omega_k)}^2
        &+
    		\sum_{k=1}^K p
    		(
        \| u - u_{1,h}^{(k)} \|_{H^{1/2}(\partial\Omega_k)}^2 
        +
        \Lambda
        \| u - u_{1,h}^{(k)} \|_{C(\Omega_k)}^2
        ).
    \end{align*}
   	Using a standard trace theorem \cite[Lemma 7.40]{Adams2003}, we have
    \begin{align*}
    		\sum_{k=1}^K
        \| u - u_{2,h} \|_{H^1(\Omega_k)}^2
         \lesssim
				p \Lambda 
    		\sum_{k=1}^K
        (\| u - u_{1,h} \|_{H^1(\Omega_k)}^2
        +\| u - u_{1,h}^{(k)} \|_{C(\Omega_k)}^2
        ).
    \end{align*}
   	\thref{lem:u1estim:lr} gives the desired result.
\end{proof}

\begin{proof}[Proof of \thref{thrm:main-lo}]
    Since $u_{2,h}\in C^0(\Omega)$ and $u_{2,h}^{(k)}\in V_h^{(k)}$ for all $k$, we have $u_{2,h}\in V_h$. Thus, the desired statement follows directly from \thref{lem:u2estim:lr} for $u$ smooth. For $u\in \mathcal H^{1+q}(\Omega)$, the result follows using standard density arguments.
\end{proof}

\section{Numerical experiments}\label{sec:6}

\subsection{L-shape domain}

As a first test case, we apply our approach for adaptive IgA to an L-shaped domain $\Omega = (-1,1)^2 \setminus ([0,1)\times[-1,0))$. Since the domain is non-convex, we cannot expect full $H^2$-regularity of the solution, even if $f \in L^2(\Omega)$ and $g \in H^{3/2}(\partial \Omega)$. 

The singularity in the solution is however local and known to be at the reentrant corner. By using adaptive refinement, we are able to reconstruct the expected order of convergence one would expect for the used spline space.

First, we prescribe a solution $u(x,y) = u(r \cos \varphi, r \sin \varphi) = r^{2/3} \sin(\tfrac23 \varphi)$, where $r$ and $\phi$ are radial coordinates and consider the following Poisson problem:
\begin{alignat}{2}
    -\Delta \phi &= 0 \quad &&\mbox{ in } \Omega,\label{lap0}\\
    \phi &= u|_{\partial \Omega} \quad &&\mbox{ on } \partial \Omega.\label{boundary0}
\end{alignat}
It is straightforward to show that $u$ solves (\ref{lap0})-(\ref{boundary0}), however it holds only that $u \in H^{4/3}(\Omega)$. 

\begin{figure}[htb]
\begin{minipage}[t]{0.3\textwidth}\centering
\includegraphics[width=\textwidth]{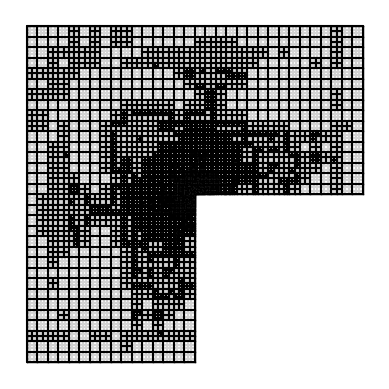}
(a) $p=2$
\end{minipage}
\hfill
\begin{minipage}[t]{0.3\textwidth}\centering
\includegraphics[width=\textwidth]{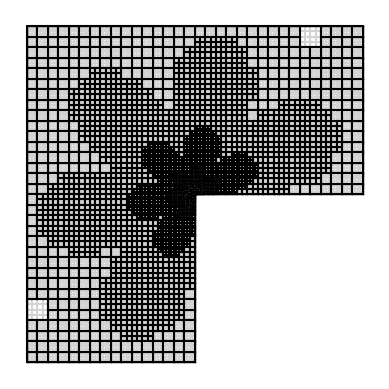}
(b) $p=3$
\end{minipage}
\hfill
\begin{minipage}[t]{0.3\textwidth}\centering
\includegraphics[width=\textwidth]{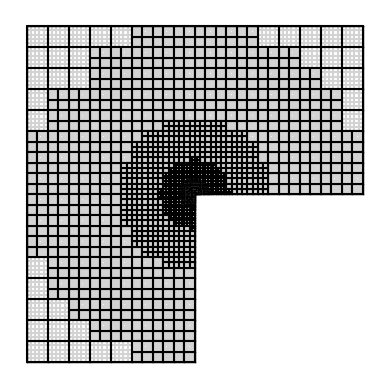}
(c) $p=4$
\end{minipage}
\caption{\label{fig:ex1:mesh} Final mesh for the first L-domain test case.}
\end{figure}

\begin{figure}[htb]
\begin{minipage}[t]{0.3\textwidth}\centering
\includegraphics[width=\textwidth]{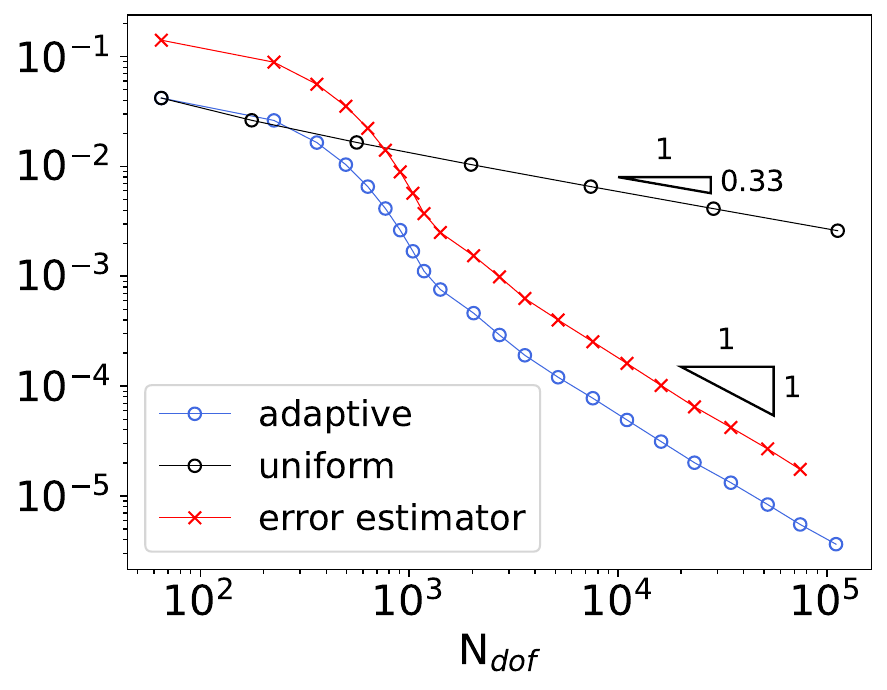}
(a) $p=2$
\end{minipage}
\hfill
\begin{minipage}[t]{0.3\textwidth}\centering
\includegraphics[width=\textwidth]{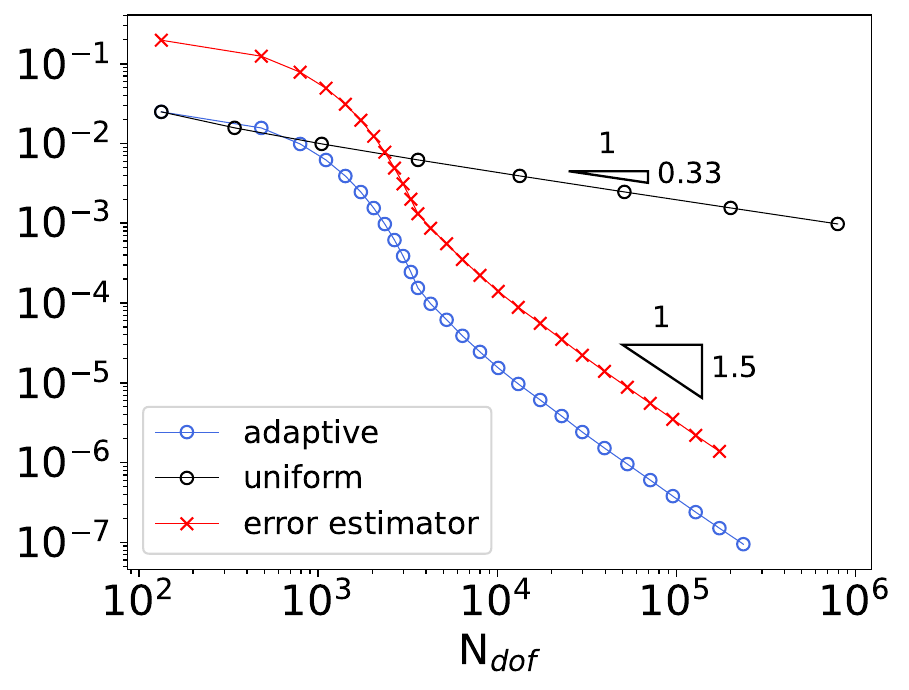}
(b) $p=3$
\end{minipage}
\hfill
\begin{minipage}[t]{0.3\textwidth}\centering
\includegraphics[width=\textwidth]{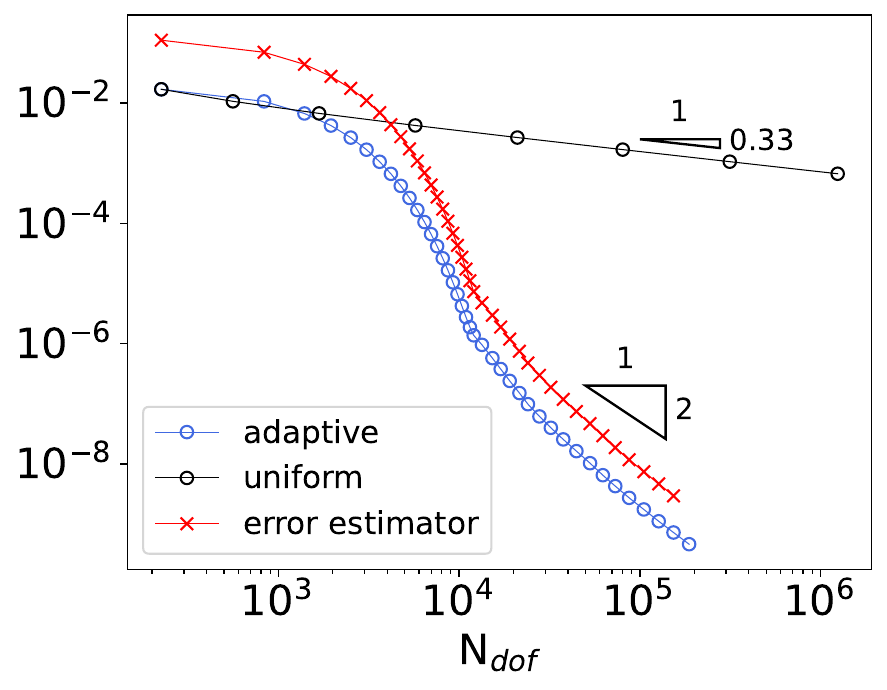}
(c) $p=4$
\end{minipage}
\caption{\label{fig:ex1:h1} $H^1$-error for the first L-domain test case.}
\end{figure}

\begin{figure}[htb]
\begin{minipage}[t]{0.3\textwidth}\centering
\includegraphics[width=\textwidth]{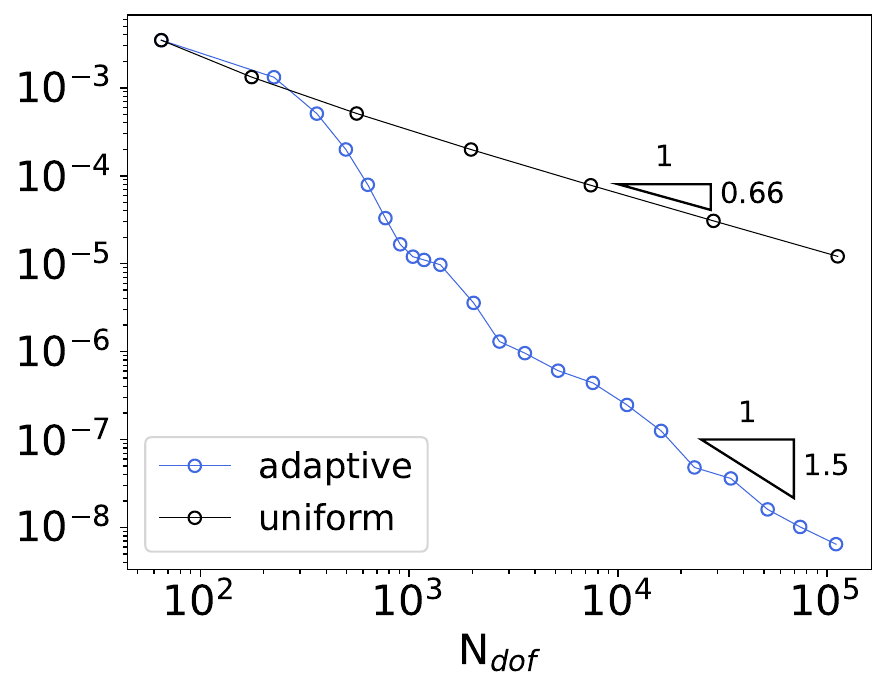}
(a) $p=2$
\end{minipage}
\hfill
\begin{minipage}[t]{0.3\textwidth}\centering
\includegraphics[width=\textwidth]{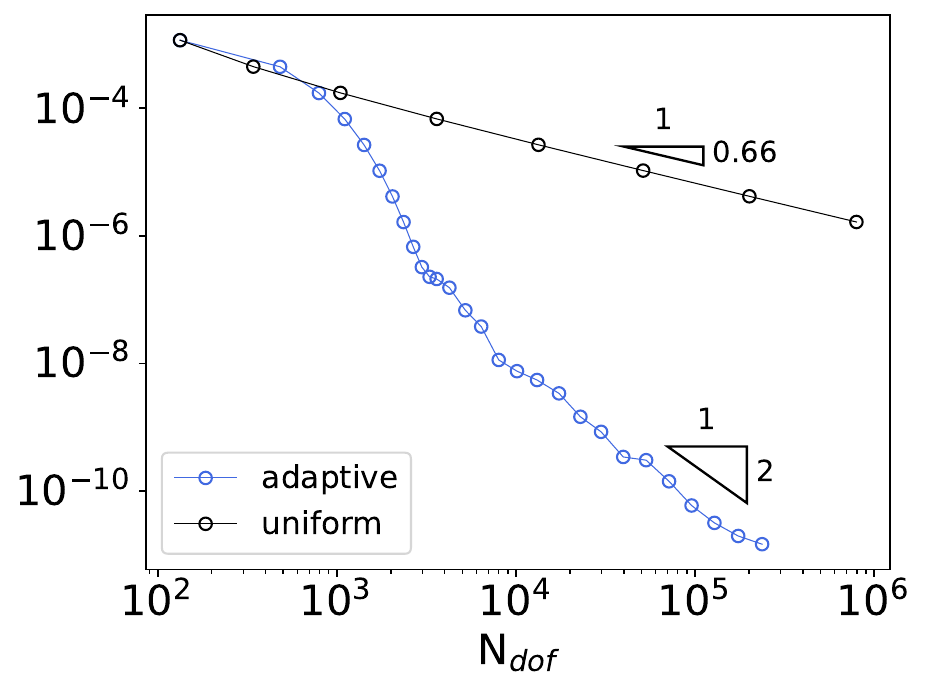}
(b) $p=3$
\end{minipage}
\hfill
\begin{minipage}[t]{0.3\textwidth}\centering
\includegraphics[width=\textwidth]{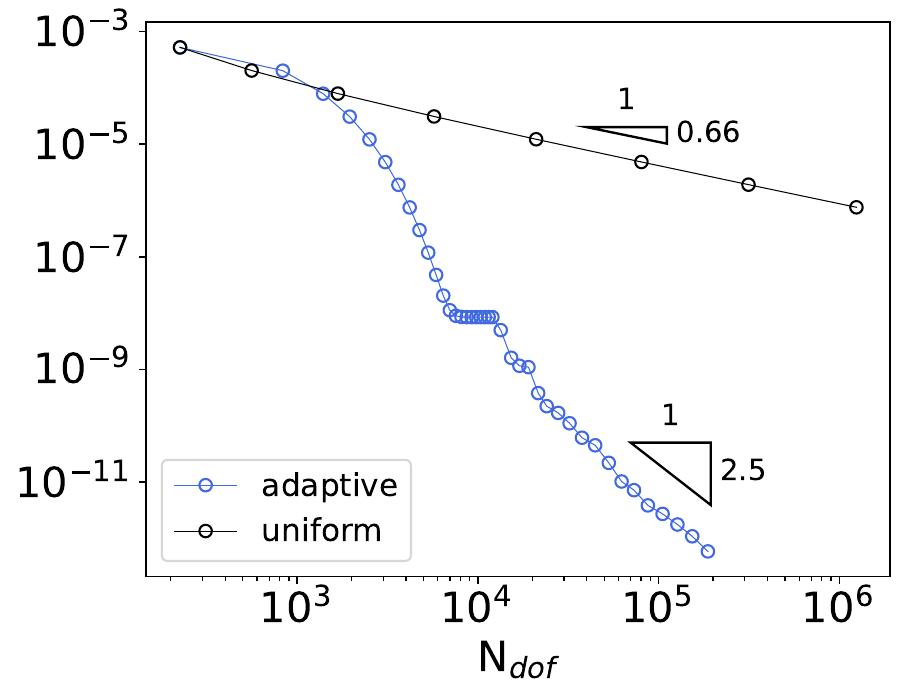}
(c) $p=4$
\end{minipage}
\caption{\label{fig:ex1:l2} $L^2$-error for the first L-domain test case.}
\end{figure}

We represent the L-domain with 3 conforming patches, where each patch is discretized by spline spaces of maximum smoothness and $p+1$ uniformly distributed knots in each coordinate, where $p$ is the spline degree. We further employ the residual error estimator~\eqref{res:err:est} on every patch. The final meshes that were obtained with adaptive refinement are depicted in Figure~\ref{fig:ex1:mesh}. In Figure~\ref{fig:ex1:h1}, the $H^1$-error and the value of the error estimator are presented. For the case of uniform refinement (black curve), one can see that the error decays like $h^{2/3} \eqsim {\mathrm N}_{dof}^{-1/3}$, which is consistent with the global regularity result $u\in H^{1+1/3}(\Omega)$. In case of adaptive refinement, one obtains for $p=2,3,4$ that -- after some faster pre-asymptotic behavior -- the $H^1$-error (blue curve) behaves like ${\mathrm N}_{dof}^{-2/p}$, which is the best one would expect for these spline degrees. The curve representing the error indicator (red curve) is parallel to that representing the $H^1$-error. Analogously, in Figure~\ref{fig:ex1:l2}, one can see that the $L^2$-error in case of uniform refinement decays like $h^{4/3} \eqsim {\mathrm N}_{dof}^{-2/3}$. Using adaptivity, one can recover the full rate ${\mathrm N}_{dof}^{-2/(p+1)}$.

Additionally we solved the Poisson problem on the L-domain with constant source and homogeneous boundary conditions, i.e. we solve
\begin{alignat*}{2}
    -\Delta \phi &= 1 \quad &&\mbox{ in } \Omega,\\
    \phi &= 0 \quad &&\mbox{ on } \partial \Omega.
\end{alignat*}
In this case, we do not know an exact solution in closed form. Since the right-hand side is smooth, the solution will be smooth in the interior as well. This does not extend to the boundary of the domain $\Omega$, particularly its corners. In the following we present the final mesh configuration for different polynomial degrees $p$.

\begin{figure}[htb]
\begin{minipage}[t]{0.3\textwidth}\centering
\includegraphics[width=\textwidth]{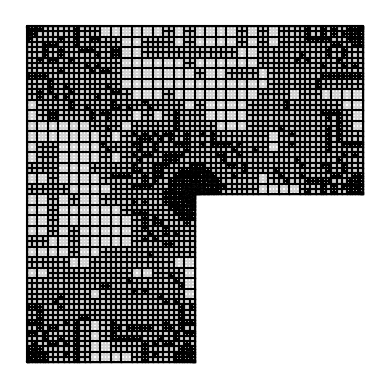}
(a) $p=2$
\end{minipage}
\hfill
\begin{minipage}[t]{0.3\textwidth}\centering
\includegraphics[width=\textwidth]{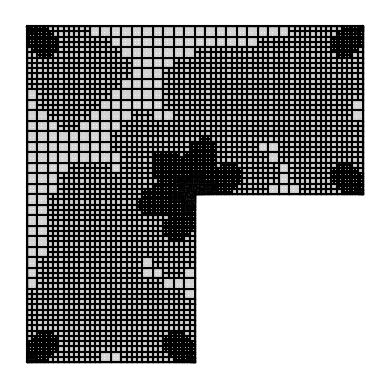}
(b) $p=3$
\end{minipage}
\hfill
\begin{minipage}[t]{0.3\textwidth}\centering
\includegraphics[width=\textwidth]{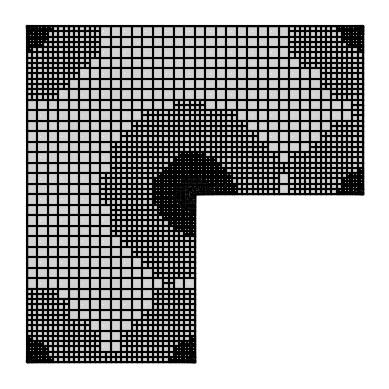}
(c) $p=4$
\end{minipage}
\caption{\label{fig:ex2:mesh} Final mesh for the second L-domain test case.}
\end{figure}

\begin{figure}[htb]
\begin{minipage}[t]{0.3\textwidth}\centering
\includegraphics[width=\textwidth]{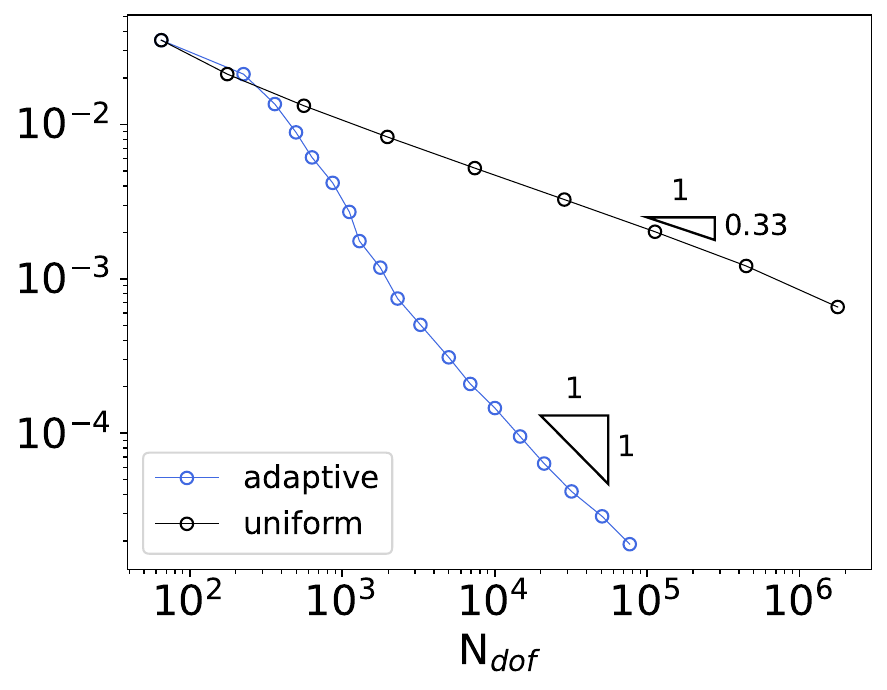}
(a) $p=2$
\end{minipage}
\hfill
\begin{minipage}[t]{0.3\textwidth}\centering
\includegraphics[width=\textwidth]{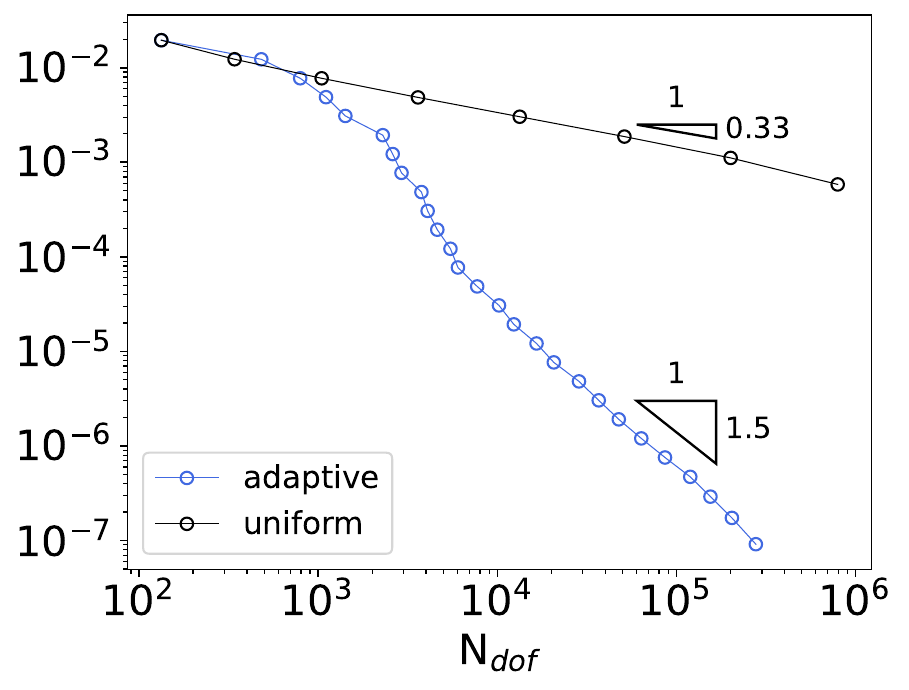}
(b) $p=3$
\end{minipage}
\hfill
\begin{minipage}[t]{0.3\textwidth}\centering
\includegraphics[width=\textwidth]{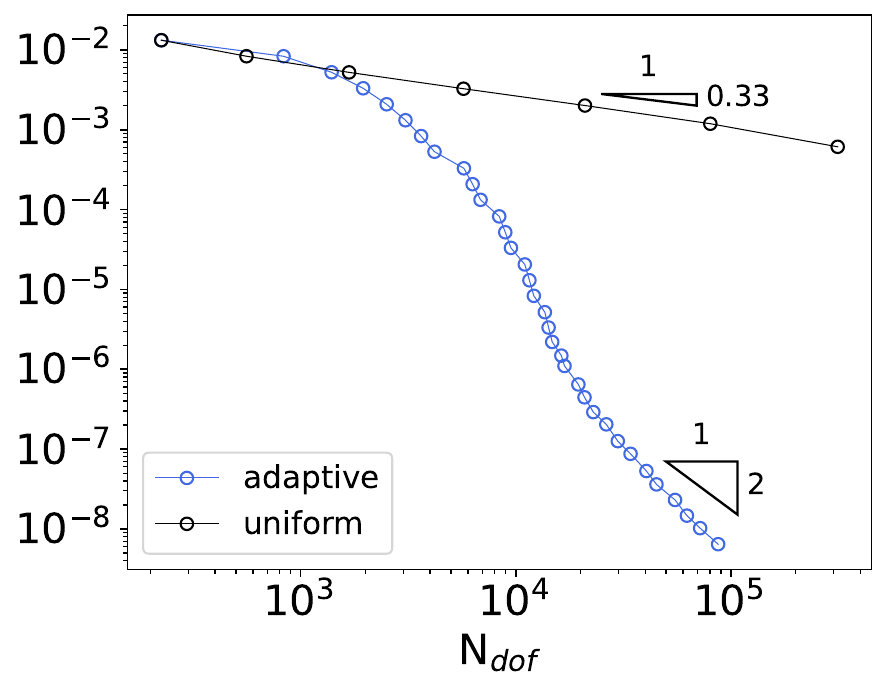}
(c) $p=4$
\end{minipage}
\caption{\label{fig:ex2:h1} $H^1$-error for the second L-domain test case.}
\end{figure}

\begin{figure}[htb]
\begin{minipage}[t]{0.3\textwidth}\centering
\includegraphics[width=\textwidth]{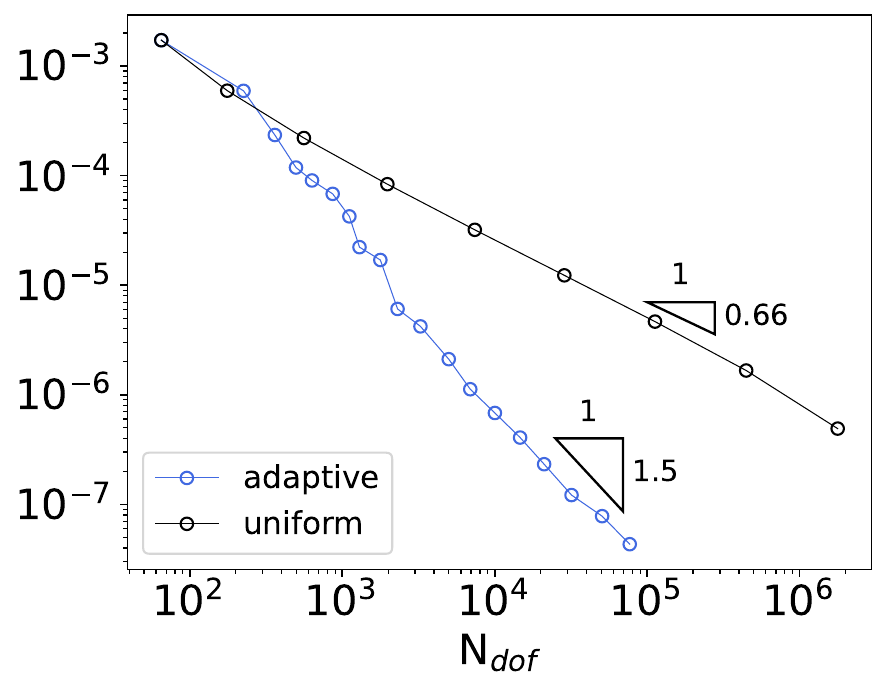}
(a) $p=2$
\end{minipage}
\hfill
\begin{minipage}[t]{0.3\textwidth}\centering
\includegraphics[width=\textwidth]{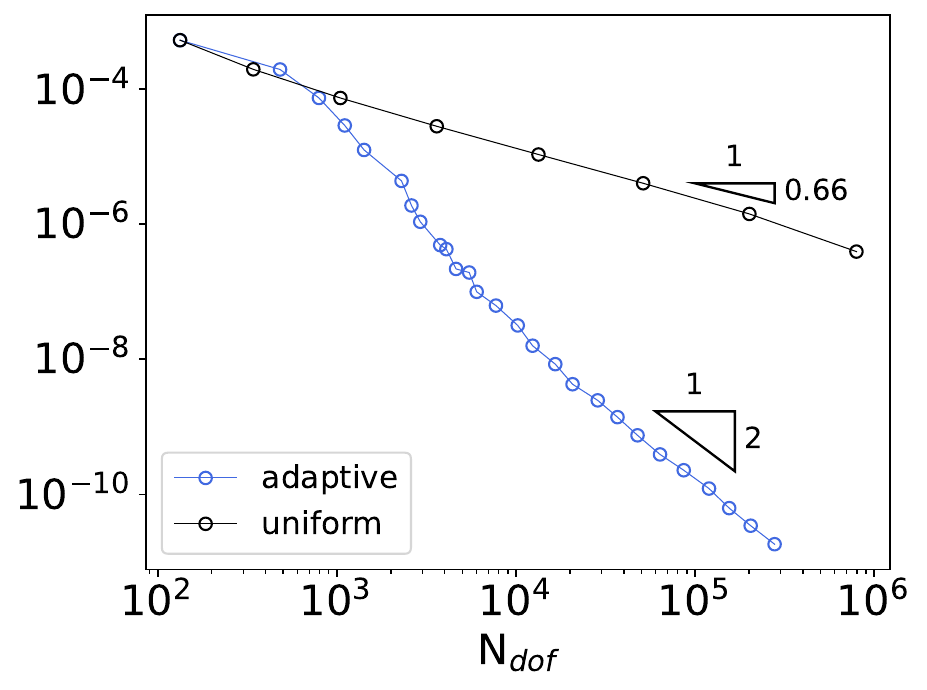}
(b) $p=3$
\end{minipage}
\hfill
\begin{minipage}[t]{0.3\textwidth}\centering
\includegraphics[width=\textwidth]{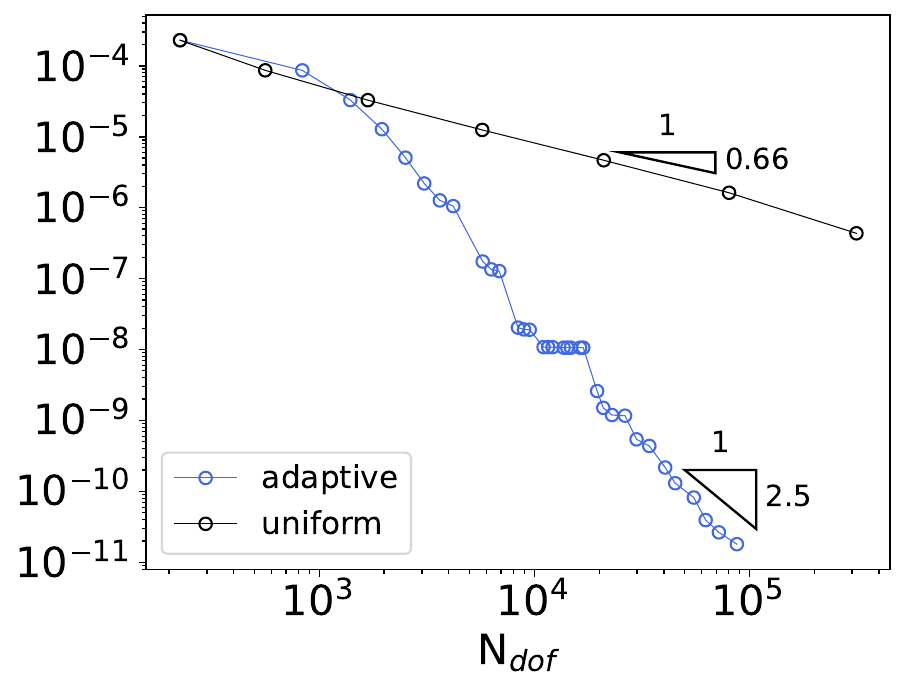}
(c) $p=4$
\end{minipage}
\caption{\label{fig:ex2:l2} $L^2$-error for the second L-domain test case.}
\end{figure}

For this example, Figures~\ref{fig:ex2:h1} and \ref{fig:ex2:l2} indicate that we obtain the same convergence behavior as for the first test case. In Figure~\ref{fig:ex2:mesh}, one can spot the difference to the first example. In the first example, the adaptive refinement algorithm has only refined towards the reentrant corner, since by construction the manifactured solution has a singularity only there. In the second example, one can see that also patches close to the regular corners of $\Omega$ are refined.

\subsection{Electric Motor}
We also apply the adaptive scheme to a (rather simplistic) electric motor, where the electromagnetic phenomena are prescribed by Maxwell's equations. In this example, we consider linear magneto-statics. For given permeability $\mu$, current density $J$ and permanent magnetization $M$, the magnetic field density $H$ and the magnetic flux density $B$ are given by
\begin{align*}
    \Curl\,H = J, \quad
    \Div\,B = 0, \quad
    B = \mu (H+M).
\end{align*} 

If the computational domain $\Omega \subset \RR^3$ is simply connected, the condition $\Div\,B=0$ implies the existence of a vector potential $A$ such that $B=\Curl\,A$. Assuming that $\Omega = \Omega_0 \times [0,L]$, $H=(H_1,H_2,0)^\top$, $B=(B_1,B_2,0)^\top$, $A=(0,0,u)^\top$, $J = (0,0,j)^\top$ and $M=(M_1,M_2,0)^\top$ the problem further simplifies to a scalar problem on the cross-section $\Omega_0$, depicted in Figure~\ref{fig:ex3:mesh} (a).
Here, we solve for the potential $u \in H_{D}^1(\Omega) := \{v \in H^1(\Omega):v|_{\Gamma_D} = 0\}$ that satisfies
\begin{alignat*}{2}
    -\Div(\mu^{-1} \nabla u) &= j + \Div\,M^\perp \quad &&\mbox{ in } \Omega_0, \\
    u&=0 \quad &&\mbox{ on } \Gamma_D, \\
    \mu^{-1} \nabla u + M^{\perp} &= 0 \quad &&\mbox{ on } \Gamma_N,
\end{alignat*}
in a weak sense, where $M^{\perp}:=(M_2,-M_1)^\top$.
The Neumann condition on $\Gamma_N$, colored in red in Figure~\ref{fig:ex3:mesh} (a), represents a periodicity assumption. The Dirichlet condition on $\Gamma_D = \partial\Omega_0 \setminus \Gamma_N$, colored in blue, represents an isolation in the magnetic flux.
We choose the current density $j=0$ on the whole domain, i.e, the electric motor is unpowered.
The permeability $\mu$ is defined piecewise depending on the material. For the air (white), we have $\mu_{\text{Air}} = 4 \pi\cdot10^{-7}$, for the iron (gray) $\mu_{\text{Iron}} = 204\pi \cdot 10^{-5}$, for the permanent magnet (yellow) $\mu_{\text{Mag}} = 4.344 \pi \cdot 10^{-7}$. So, we have jumps in the coefficient $\mu^{-1}$ of approximately the magnitude $10^4$. The permanent magnetization on the magnet area ($M$ vanishes everywhere else) is given by $M = \rho\,\mu^{-1}_{\text{Mag}}\,\bold{n}$, where $\rho=1.28$ is the magnetic remanence and $\bold{n}$ is the unit vector perpendicular to the centerline of the magnet. The direction of $\bold{n}$ is alternated between consecutive magnets.

\newcommand{\s}{3.4}
\begin{figure}[htb]
\begin{minipage}[t]{0.24\textwidth}\centering
\includegraphics[height=\s cm]{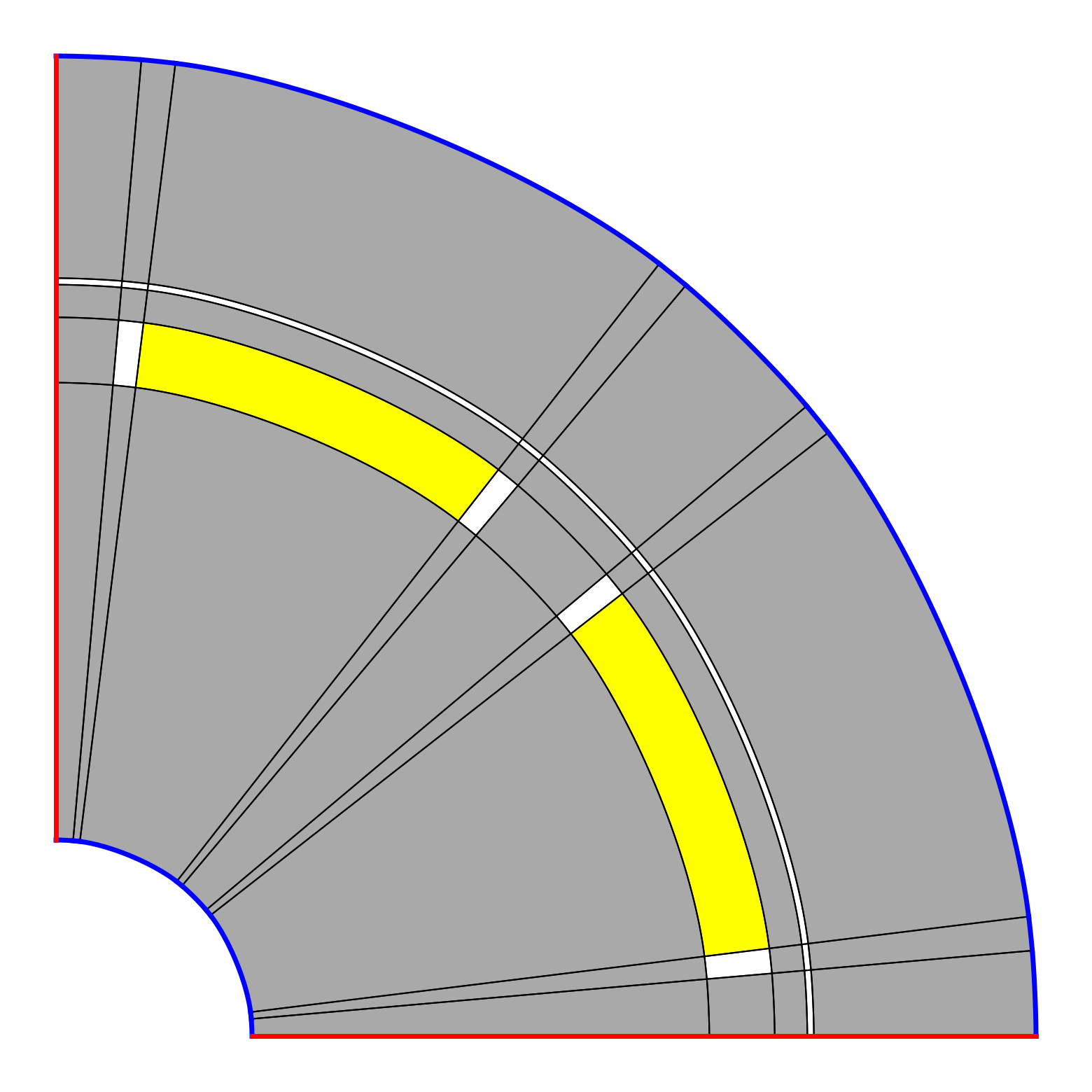}\\
(a) Material distribution
\end{minipage}
\hfill
\begin{minipage}[t]{0.24\textwidth}\centering
\includegraphics[height=\s cm]{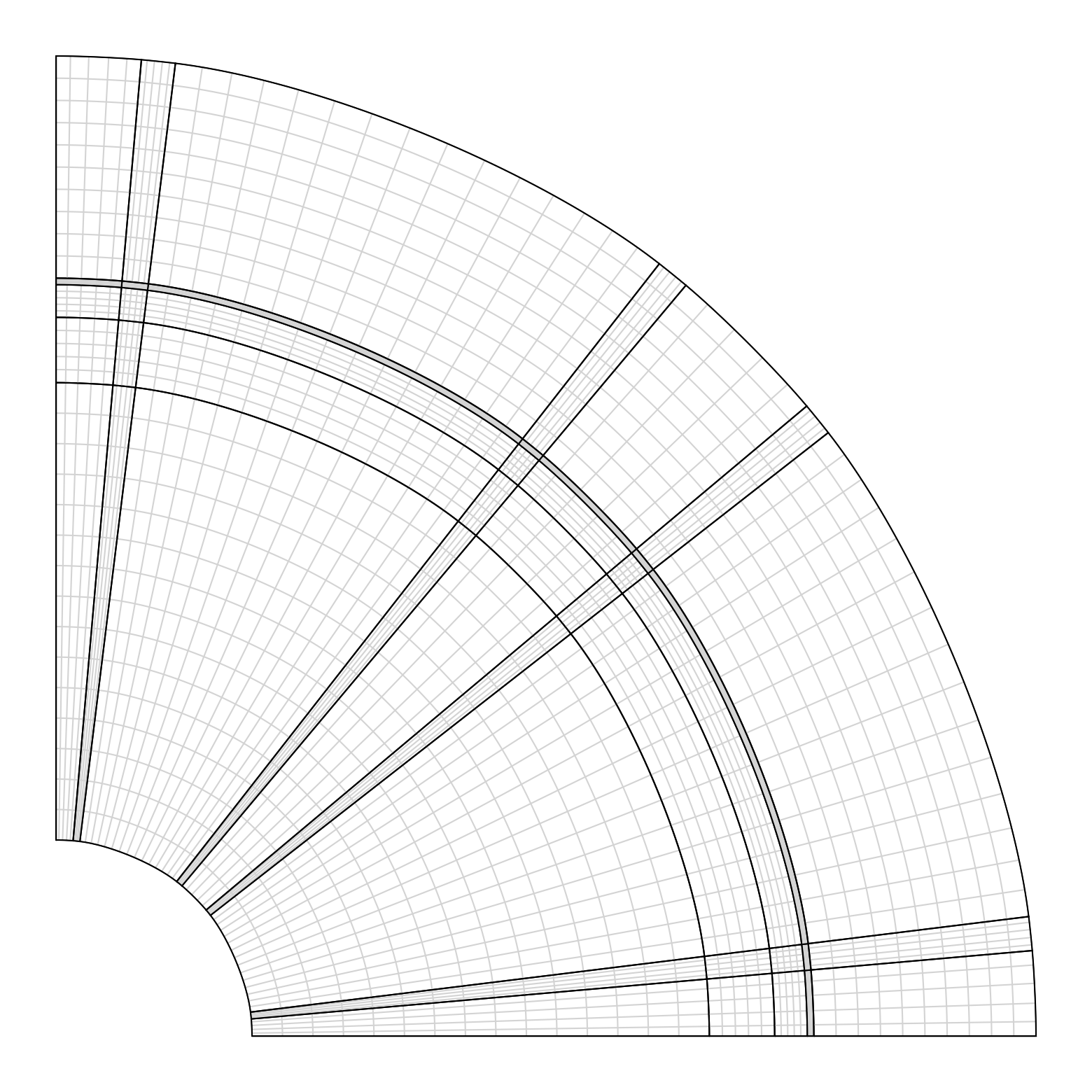}\\
(b) Initial patch configuration
\end{minipage}
\hfill
\begin{minipage}[t]{0.24\textwidth}\centering
\includegraphics[height=\s cm]{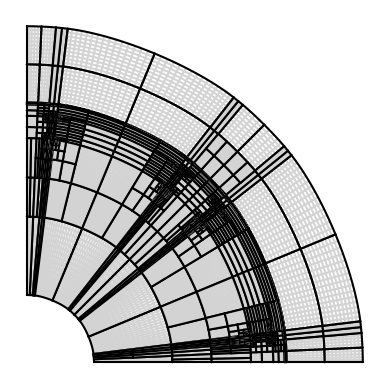}\\
(c) Adaptively refined mesh
\end{minipage}
\hfill
\begin{minipage}[t]{0.24\textwidth}\centering
\includegraphics[height=\s cm]{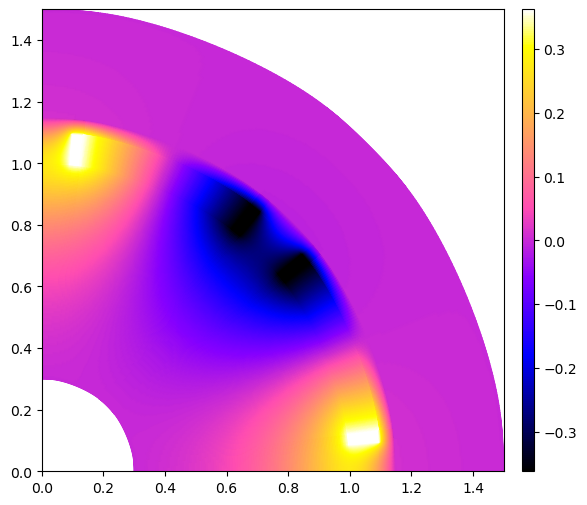}\\
(d) Vector potential
\end{minipage}
\caption{\label{fig:ex3:mesh} Electrical motor.}
\end{figure}

\begin{figure}[htb]
\begin{minipage}[t]{0.3\textwidth}\centering
\includegraphics[width=\textwidth]{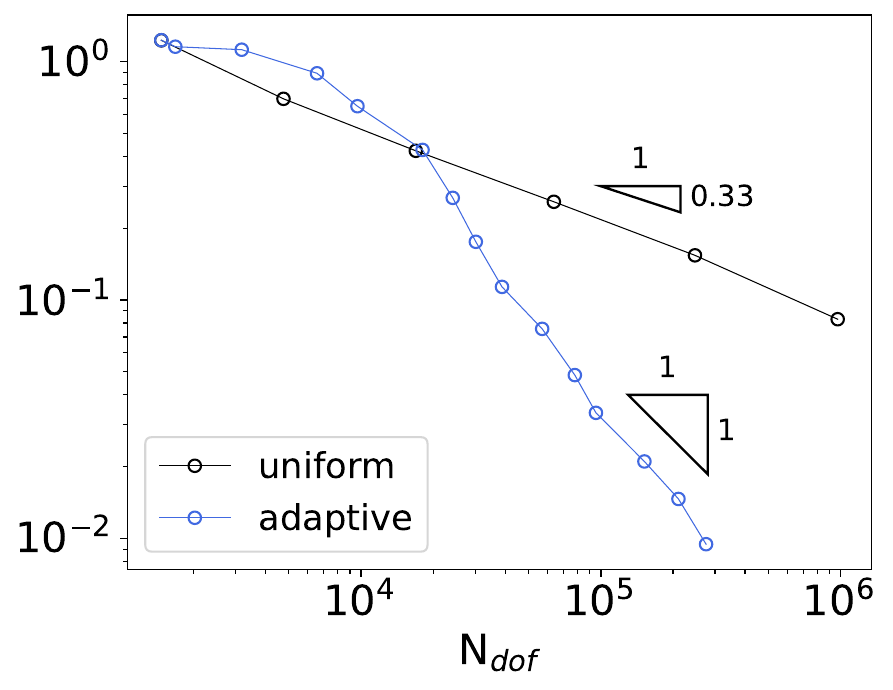}
(a) $p=2$
\end{minipage}
\hfill
\begin{minipage}[t]{0.3\textwidth}\centering
\includegraphics[width=\textwidth]{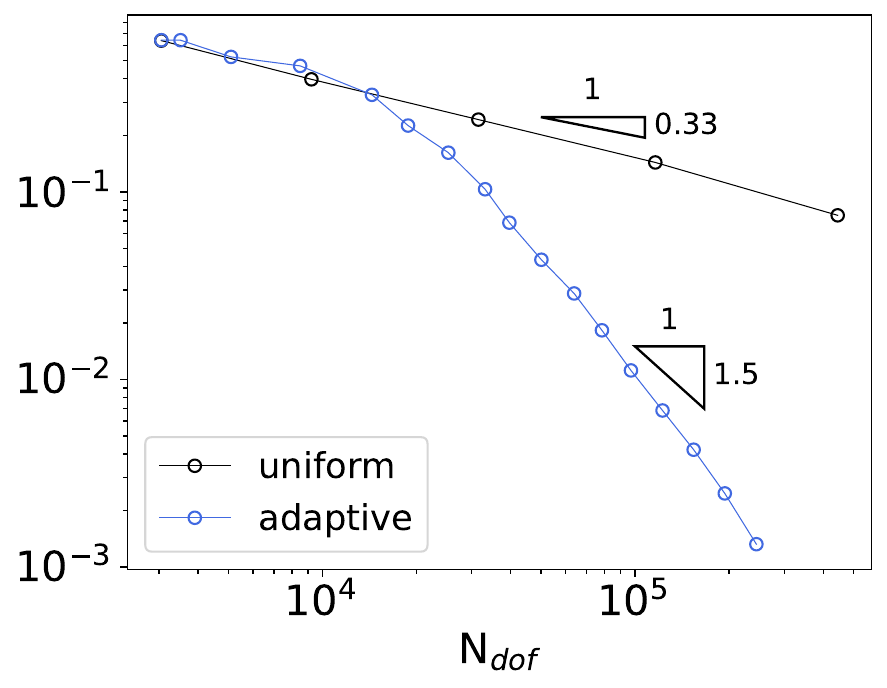}
(b) $p=3$
\end{minipage}
\hfill
\begin{minipage}[t]{0.3\textwidth}\centering
\includegraphics[width=\textwidth]{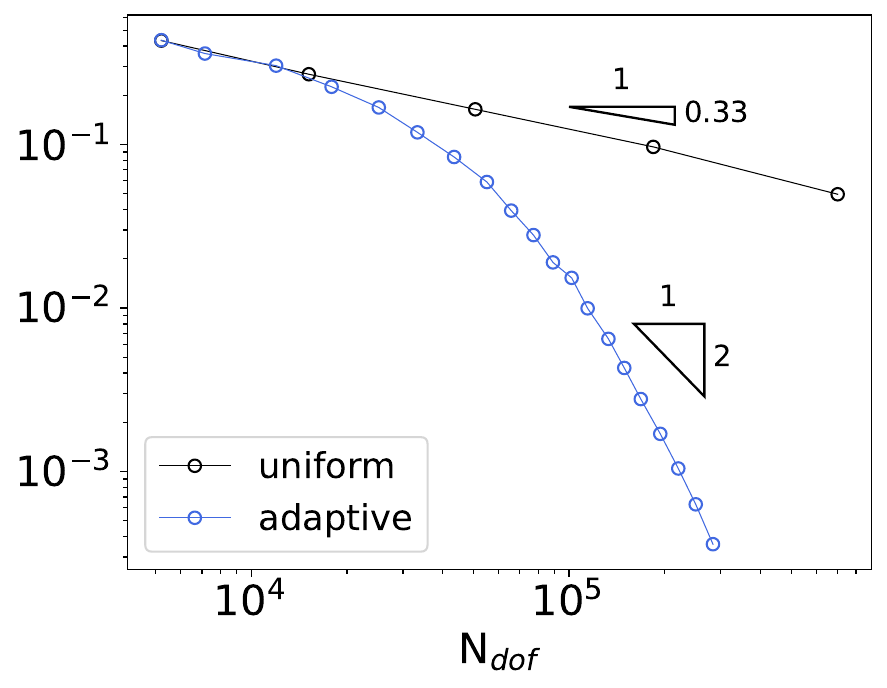}
(c) $p=4$
\end{minipage}
\caption{\label{fig:ex3:h1} Error in energy norm for the electrical motor.}
\end{figure}

\begin{figure}[htb]
\begin{minipage}[t]{0.3\textwidth}\centering
\includegraphics[width=\textwidth]{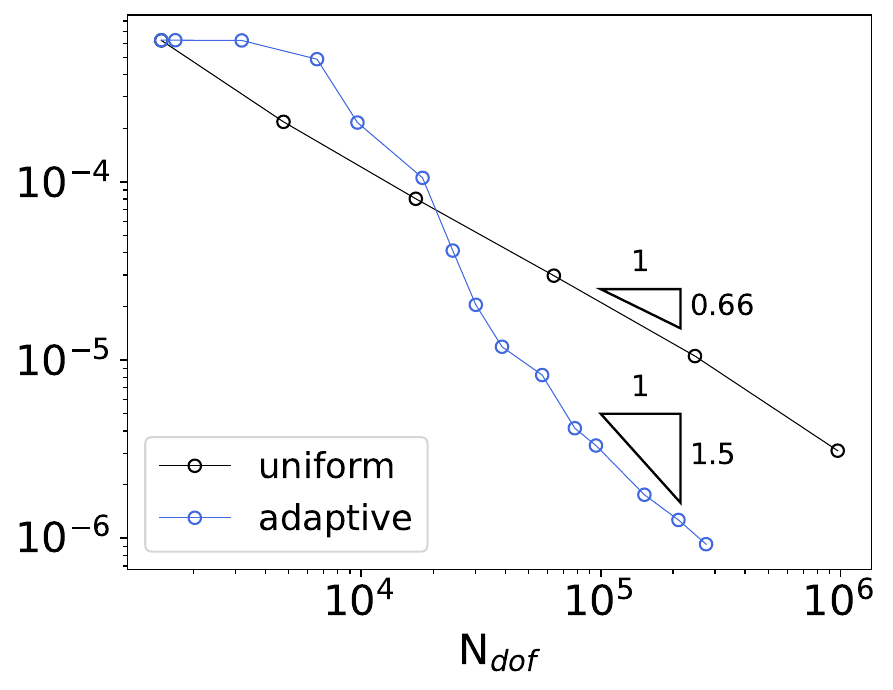}
(a) $p=2$
\end{minipage}
\hfill
\begin{minipage}[t]{0.3\textwidth}\centering
\includegraphics[width=\textwidth]{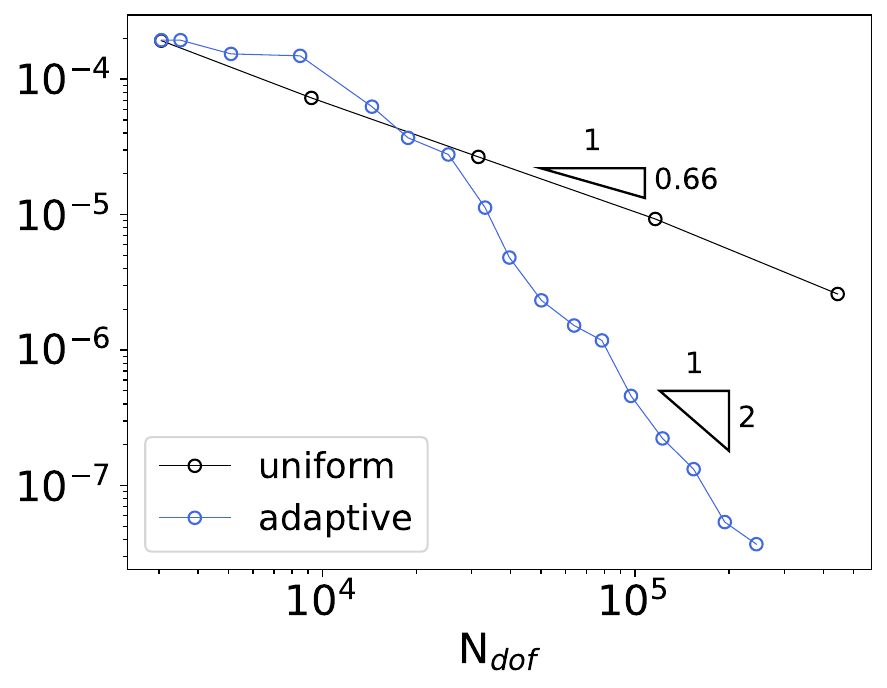}
(b) $p=3$
\end{minipage}
\hfill
\begin{minipage}[t]{0.3\textwidth}\centering
\includegraphics[width=\textwidth]{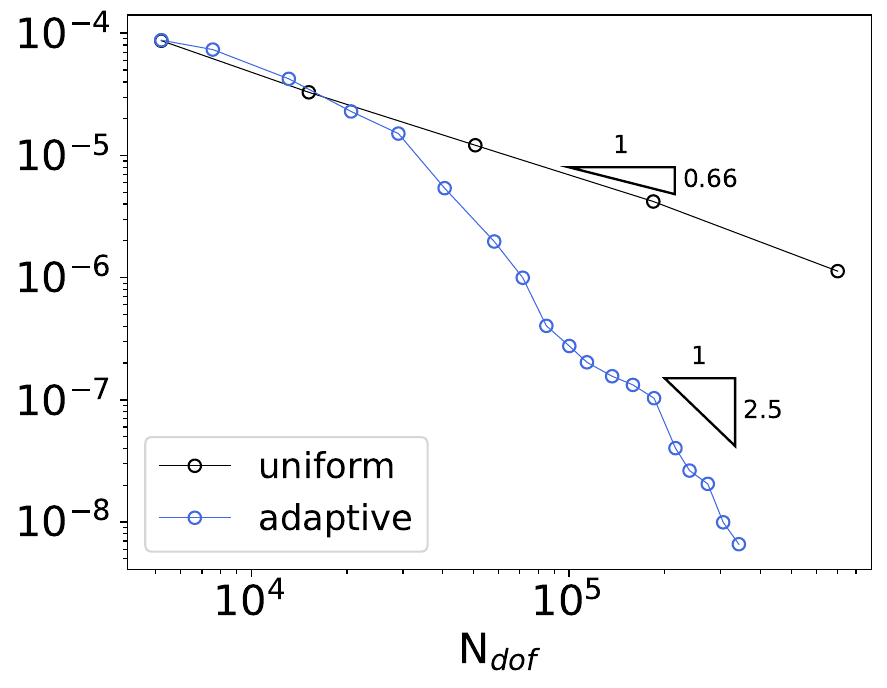}
(c) $p=4$
\end{minipage}
\caption{\label{fig:ex3:l2} $L^2$-error for the electrical motor.}
\end{figure}

The initial patch configuration used for discretization of the variational problem is depicted in Figure~\ref{fig:ex3:mesh} (b), where the patch boundaries are the black lines and the inner knot mesh is shown in light gray. One can see that the domain does not have reentrant corners that would reduce the smoothness of the solution. However, the material parameters vary significantly, which again reduces the smoothness of the solution. The solution is depicted in Figure~\ref{fig:ex3:mesh} (d); it was computed on the adaptive mesh seen in Figure~\ref{fig:ex3:mesh} (c). The Figures~\ref{fig:ex3:h1} and \ref{fig:ex3:l2} show that the convergence rates are again reduced if we use uniform refinement. Since the singularities are caused by 90-degree corners adjacent to the permanent magnets, the error in the energy norm converges like $h^{2/3}\eqsim \mathrm{N}_{dof}^{-1/3}$ and the $L^2$ error like $h^{4/3}\eqsim \mathrm{N}_{dof}^{-2/3}$. The full rate that would be expected for the spline degree $p=2$ is again obtained by using the adaptive approach. For higher spline degrees ($p=3,4$) we even seem to observe superconvergent rates, however, this might still be in the pre-asymptotic regime.

\section{Conclusions}\label{sec:7}

We proposed a new approach to employ adaptive mesh refinement in the framework of multi-patch Isogeometric Analysis that allows to re-use computational frameworks for the handling of multi-patch geometries that are already well-established. Specifically, on each patch, the tensor-product structure of the B-spline basis is retained.
Since our approach increases the number of patches, we introduce additional interfaces that are only $C^0$ continuous. This, however, only marginally increases the number of degrees of freedom compared to alternative methods (like HB- or THB-splines or T-splines). Since it reduces the overlap between the supports of basis functions of different mesh levels, our approach is both simpler and can be implemented in a more efficient way. Through the subdivision of local patches into sub-patches, hanging nodes are emerging, which leads to non-matching discretizations along common interfaces. These discretizations are coupled through constraints on each edge, yielding a $H^1$-conforming function space.

For practical computations, a basis of the overall space might be desired. We have developed an algorithm that uses these constraints and constructs a basis for the whole spline space. Since it is purely matrix based, it can be extended to 3D effortlessly. The constructed global basis retains all important properties of the local spline bases, like the non-negativity of the basis functions, that they form a partition of unity and have local support.

Additionally we gave an a-priori approximation error analysis by constructing a suitable quasi-interpolation operator. For the high regularity case ($u \in H^2(\Omega)$), we could even show $p$-robustness. An analogous result is obtained for the low regularity case ($u \in H^{1+q}(\Omega)$ with $q\in (0,1)$), however its dependence on the parameters seems not to be sharp.
We further gave some numerical examples confirming applicability of adaptive schemes with our approach of patch-subdivision.

\printbibliography
\end{document}